\RequirePackage{fix-cm}
\RequirePackage{amsmath}
\documentclass[smallextended,envcountsect]{svjour3hack}
\usepackage{mathptmx}       
\usepackage{helvet}         
\usepackage{courier}        
\usepackage{type1cm}        

\smartqed
\usepackage{graphicx}

\usepackage{amsmath,amsthm,amssymb}
\usepackage[dvipsnames]{xcolor}
\usepackage{float}
\usepackage{enumitem}
\usepackage[utf8]{inputenc}
\usepackage[T3,T1]{fontenc}
\DeclareSymbolFont{tipa}{T3}{cmr}{m}{n}
\DeclareMathAccent{\invbreve}{\mathalpha}{tipa}{16}
\usepackage{bm}
\usepackage[hypertexnames=false,colorlinks=true,breaklinks=true,bookmarks=true,urlcolor=blue,citecolor=blue,linkcolor=blue,bookmarksopen=false,draft=false]{hyperref}

\vfuzz20pt 
\hfuzz80pt 

\DeclareMathOperator{\vol}{vol}
\DeclareMathOperator{\convcl}{convcl}
\DeclareMathOperator{\conv}{conv}
\DeclareMathOperator{\adet}{adet}

\makeatletter
\let\c@proposition\c@theorem
\let\c@corollary\c@theorem
\let\c@lemma\c@theorem
\let\c@definition\c@theorem
\let\c@example\c@theorem
\let\c@conjecture\c@theorem
\let\c@conj\c@theorem
\makeatother

\allowdisplaybreaks

\journalname{JOTA}

\begin{document}

\title{Gaining or Losing Perspective for\\  Piecewise-Linear  Under-Estimators of\\ Convex Univariate Functions\footnote{A short preliminary version of some of this work will appear in the proceedings of CTW 2020: \hfil\break \phantom{xx} \url{https://sites.google.com/site/jonleewebpage/PLperspec_CTW_final.pdf}}}
\titlerunning{Perspective for Piecewise-Linear Under-Estimators of Convex Univariate Functions}

\author{Jon Lee \and Daphne Skipper \and Emily Speakman \and Luze Xu}%
\institute{J. Lee \at University of Michigan. \email{jonxlee@umich.edu}
\and D. Skipper \at United States Naval Academy. \email{skipper@usna.edu}
\and E. Speakman \at University of Colorado, Denver. \email{emily.speakman@ucdenver.edu}
\and L. Xu \at University of Michigan. \email{xuluze@umich.edu}}

\authorrunning{Lee, Skipper, Speakman, Xu}

\date{\today}

\maketitle

\begin{abstract}
We study MINLO (mixed-integer nonlinear optimization) formulations of the disjunction
$x\in\{0\}\cup[\ell,u]$, where $z$ is a binary indicator
of $x\in[\ell,u]$ ($0 \leq \ell <u$), and $y$ ``captures'' $f(x)$, which is assumed to be
convex and positive on its domain  $[\ell,u]$, but otherwise $y=0$ when $x=0$.
This model is very useful in nonlinear combinatorial optimization,
where there is a fixed cost of operating an activity at level $x$ in the operating range $[\ell,u]$, and then there is a further (convex) variable cost $f(x)$.
 In particular, we study relaxations related to
the perspective transformation of a natural piecewise-linear under-estimator of $f$, obtained
by choosing linearization points for $f$.
Using 3-d volume (in $(x,y,z)$) as a measure of the tightness of a convex relaxation, we
investigate relaxation quality as a function of $f$, $\ell$, $u$, and the linearization points chosen.
We make a detailed investigation for convex power functions $f(x):=x^p$, $p>1$.
\end{abstract}
\keywords{convex relaxation \and perspective function/transformation \and volume \and piecewise linear \and univariate \and indicator variable \and global optimization \and mixed-integer linear optimization}
\subclass{90C26 \and 90C25 \and  65K05 \and 49M15}


\section{Introduction}

\subsection{Definitions and background}
Let $f$ be a univariate convex function with domain
$[\ell,u]$, where $0\leq \ell<u$. We assume that
$f$ is positive on $[\ell,u]$.
We are interested in  the mathematical-optimization context of modeling a function, represented by a variable $y$,
that is equal to a given convex function $f(x)$ on an ``operating range'' $[\ell,u]$
and equal to $0$ at $0$. We do this
using a $0/1$ indicator variable $z$ (which conveniently
allows for incorporating a fixed cost for $x$ being in the operating range),
and we represent the relevant set disjunctively
as follows.
We define
 \begin{align*}
 \invbreve{D}_f(\ell,u):=
&
\{(0,0,0)\} {\textstyle \bigcup}
\left\{ (x,y,1) \in \mathbb{R}^3 ~\negthinspace\negthinspace:~\negthinspace\negthinspace\negthinspace\negthinspace
\vphantom{\scriptstyle\frac{f(u)-f(\ell)}{u-\ell}}
\right.
\left.
 f(\ell) + {\scriptstyle \frac{f(u)-f(\ell)}{u-\ell} }(x-\ell)
\geq y \geq f(x),~ u\geq x \geq \ell
\right\}.
\end{align*}
Notice that for $x\in\{\ell,u\}$, we have $y=f(x)$. So, the upper bound on $y$
enables us to capture the convex hull of the graph of the convex $f(x)$ on $[\ell,u]$,
in the $z=1$ plane.

Next, following the notation of \cite{Perspec2019}, we define the \emph{perspective relaxation}
\begin{align*}
&\invbreve{S}^*_f(\ell,u) := \convcl \left\{ (x,y,z) \in \mathbb{R}^3 ~:~
\left(f(\ell)-  {\scriptstyle \frac{f(u)-f(\ell)}{u-\ell}} \ell\right)z + {\scriptstyle \frac{f(u)-f(\ell)}{u-\ell}} x
\geq y \geq z f(x/z),~ \right.\\
&\left. uz\geq x
\geq \ell z,~ 1\geq z > 0,~ y\geq 0
\vphantom{\scriptstyle\frac{f(u)-f(\ell)}{u-\ell}}
\right\},
\end{align*}
where $\convcl$ denotes the convex closure operator. 
Notice that ``perspectivizing'' the convex $f(x)$ produces a more complicated but still convex function
$z f(x/z)$, and handling such a function pushes us into the realm of conic programming.
On the other side, perspectivizing the (univariate) linear upper bound on $y$ leads to a (bivariate but still)
linear upper bound on $y$.
Intersecting $\invbreve{S}^*_f(\ell,u)$ with the hyperplane defined by $z=0$, leaves the
single point $(x,y,z)=(0,0,0)$, which is only in the set after we take the closure.
 In this way, the ``perspective and convex closure'' construction
gives us exactly the value $y=0$ that we want at $x=0$. Moreover, $\invbreve{S}^*_f(\ell,u)$
is precisely the convex closure of $\invbreve{D}_f(\ell,u)$.

We compare convex bodies relaxing
$\invbreve{S}^*_f(\ell,u)$
 via their
volumes, with an eye toward weighing the relative tightness
of relaxations against the difficulty of solving them.
Generally, working with $\invbreve{S}^*_f(\ell,u)$ implies
using a cone solver (e.g., Mosek), while relaxations
imply the possibility of using more general NLP or even LP solvers;
see \cite{Perspec2019} for more discussion on this important
motivating subject.
One key relaxation previously studied
requires that the domain of $f$ is all of $[0,u]$,
$f$ is convex on $[0,u]$, $f(0)=0$,
and $f$ is  increasing on $[0,u]$.
For example, convex power functions $f(x):=x^p$ with $p>1$ have these properties.
Assuming these properties, we define the \emph{na\"{\i}ve relaxation}
\begin{align*}
&\invbreve{S}^0_f(\ell,u):=
\left\{ (x,y,z) \in \mathbb{R}^3 ~:~
\left(f(\ell)-  {\scriptstyle \frac{f(u)-f(\ell)}{u-\ell}} \ell\right)z
  + {\scriptstyle \frac{f(u)-f(\ell)}{u-\ell}} x
\geq y \geq f(x),~ \right.\\
&\left. uz\geq x \geq  \ell z,~   1\geq z \geq 0
\vphantom{\scriptstyle\frac{f(u)-f(\ell)}{u-\ell}}
\right\}.
\end{align*}
While the na\"{\i}ve relaxation is weaker than the perspective relaxation,
it can be handled more efficiently and by a wider class of solvers
because of its simpler form involving $f(x)$ rather than $zf(x/z)$.

\subsection{Relation to previous literature}

The perspective transformation of a convex function is
well known in mathematics (see \cite{perspecbook}, for example).
Applying it in the context of our disjunction is
also well studied (see \cite{gunlind1,Frangioni2006,Akturk}, with applications
to nonlinear facility location and also mean-variance portfolio optimization in the
style of Markowitz).
The idea of using volume to compare relaxations was
introduced by \cite{LM1994} (also see \cite{LeeSkipperSpeakmanMPB2018}
and the references therein). Recently, \cite{PerspectiveWCGO,Perspec2019}
applied the idea of using volumes to evaluate and compare
the perspective relaxation with other relaxations of our disjunction.

Piecewise linearization is a very well studied and useful concept
for handling nonlinearities
(see, for example, \cite{CharnesLemke,LeeWilson} and also
the more recent \cite{Viel,TorViel}
and the many references therein).
It is a natural idea to strengthen a convex piecewise linearization
of a convex univariate function
using the perspective idea, and then to evaluate it using
volume computation. This is what we pursue here, concentrating on
piecewise-linear under-estimators of univariate convex functions.
We also wish to mention and emphasize that our techniques are
directly relevant for (additively) separable convex functions
(see \cite{Bonami,Berenguel}, and of course all of the exact global-optimization
solvers (which induce a lot of separability via reformulation using additional variables).

\subsection{Our contribution and organization}

Our focus is on
 relaxations related to natural piecewise-linear
under-estimators of $f$. Piecewise linearization is a standard method
for efficiently handling nonlinearities in optimization. For a convex
function, it is easy to get a piecewise-linear under-estimator.
But there are a few issues to consider: the number of
linearization points, how to choose them, and how to
handle the resulting piecewise-linearization.

In particular, we look at the behavior of the perspective relaxation associated with
a natural piecewise-linear under-estimator of a convex univariate function,
as we vary the placement and the number of linearization points
describing the piecewise-linear under-estimator.

In \S\ref{sec:PLest}, we introduce notation for a natural piecewise-linear under-estimator $g$ of $f$ on $[\ell,u]$, using linearizations of $f(x)$ at
$n+1(\geq 2)$ values of $x$, namely $\ell =: \xi_0 < \xi_1 < \cdots < \xi_n := u$,
we define the convex relaxation $\invbreve{U}^*_f(\bm\xi):=\invbreve{S}^*_g(\ell,u)$, and  we describe an efficient algorithm for
determining its volume (Theorem \ref{PLperspecvolformula} and Corollary \ref{cor:lineartime}).
Armed with this efficient algorithm, any global-optimization software
could decide between members of this family of
 formulations (depending on the number and placement of linearization points)
 and also alternatives (e.g., $\invbreve{S}^*_f(\ell,u)$
and $\invbreve{S}^0_f(\ell,u)$, explored in \cite{Perspec2019}), trading off tightness of the formulations against the relative ease/difficulty of working with them computationally.

In \S\ref{sec:xtothep}, we give a more detailed analysis for convex power functions $f(x):=x^p$, for $p>1$.
In \S\ref{sec:xtothep:quadratics}, focusing on quadratics ($p=2$), we solve the volume-minimization problem for $\vol(\invbreve{U}^*_f(\bm\xi))$  when $p=2$ (Theorem \ref{thm:p2n}), for an arbitrary number of linearization points, thus finding the
optimal placement of linearization points for convex quadratics.
Further, from this, we recover the associated formula from \cite{Perspec2019} for $\vol(\invbreve{S}^*_f(\ell,u))$ (Corollary \ref{cor:S2}), and we
demonstrate that the minimum volume is always less than the volume of the na\"{i}ve relaxation when $p=2$ (Corollary \ref{rem:compare}).
In \S\ref{sec:xtothep:nonquadratics}, focusing on non-quadratics ($p\not=2$), we first demonstrate with Theorem \ref{thm:localmin}  that
all stationary points  are strict local minimizer. Next, with Theorem \ref{conj:pn},
we demonstrate that for $p\leq 2$, that the volume function is strictly convex, and so in this
case (Corollary \ref{thm:partial_uniquemin}), we can conclude that it has a unique minimizer.
We establish that this also holds for $p>2$ (Theorem \ref{thm:unique_stationary} and Corollary \ref{thm:remaining_uniquemin}). We also establish that the optimal location of each linearization point is increasing in $p$ on $(1,\infty)$
(Theorem \ref{conj:increasing}). Finally, we establish a nice monotone behavior for Newton's method on our volume minimization problem (Theorem
\ref{thm:Newtonn}).
In \S\ref{sec:xtothep:single}, we consider optimal placement of a single non-boundary linearization point.
Furthermore, via a simple transformation, for the tricky case of
minimizing $\vol(\invbreve{U}^*_p(\ell,\xi_1,u))$ when $p>2$, we
can reduce that problem to maximizing a strictly concave function (Theorem \ref{thm:logconcave}).
Next, we provide some bounds on the minimizing $\xi_1$ (Theorem \ref{conj:location}).
This can be useful on determining a reasonable initial point for
a minimization algorithm or even for a reasonable static
rule for selecting  linearization points.
Next, we establish how good our bounds are in the case of $\ell=0$ (Proposition \ref{prop:deltabehavior0}).

In \S\ref{sec:lighter},
we consider several related relaxations that are less computational burdensome than the perspective relaxation
applied to a convex power function or even to a piecewise-linear under-estimator.
To demonstrate the type of results that can be established, we focus on convex power functions and
ultimately quadratics with equally-spaced linearization points. In particular, we establish how many linearization
points are needed for various approximations.


\section{Piecewise-linear under-estimation and perspective}\label{sec:PLest}

Piecewise-linear estimation is widely used in optimization.
\cite{LeeWilson} provides some key relaxations
using integer variables, even for non-convex functions
on  multidimensional (polyhedral) domains.
We are particularly interested in piecewise-linear \emph{under-}estimation
because of its value in global optimization.

Given convex $f:[\ell,u]\rightarrow \mathbb{R}_{++}$, we consider linearization points
\[
\ell =: \xi_0 < \xi_1 < \cdots < \xi_n := u
\]
in the domain of $f$, and we assume that $f$ is differentiable
at these $\xi_i$.

At each  $\xi_i$, we have the tangent line
\begin{equation}
y=f(\xi_i)+f'(\xi_i)(x-\xi_i), \label{eq:tangentline}\tag{$T_i$}
\end{equation}
for $i=0,\ldots,n$. Considering tangent lines \ref{eq:tangentline}
and $T_{i-1}$ (for adjacent points),
 we have the intersection point
\begin{equation}
(x,y):=(\tau_i,~ f(\xi_i)+f'(\xi_i)(\tau_i-\xi_i)),  \hbox{ for $i=1,\ldots,n$},  \label{eqint:point}\tag{$P_i$}
\end{equation}
where
\[
\tau_i :=
\frac{
\left[f(\xi_i)-f'(\xi_i)\xi_i  \right]  -\left[ f(\xi_{i-1})-f'(\xi_{i-1})\xi_{i-1} \right]
}
{
f'(\xi_{i-1})- f'(\xi_i)
}.
\]
Finally, we define
\begin{equation}
(x,y):=(\tau_0:=\ell,f(\ell)) \label{eqint:point0}\tag{$P_0$}
\end{equation}
and
\begin{equation}
(x,y):=(\tau_{n+1}:=u,f(u)). \label{eqint:pointn+1}\tag{$P_{n+1}$}
\end{equation}

It is easy to see that
$
\ell =: \tau_0 < \tau_1 < \cdots < \tau_{n+1} := u,
$
and that
 the piecewise-linear function $g:[\ell,u]\rightarrow \mathbb{R}$,
defined as the function having the graph that connects the \ref{eqint:point},
for $i=0,1,\ldots,n+1$, is a convex under-estimator of $f$ (agreeing with
$f$ at the $\xi_i$; 
see Fig. \ref{fig:pl}.
In what follows, $g$ is always defined
as above (from $f$ and   $\bm\xi$).
\begin{figure}[H]
  \centering
 \includegraphics[scale = 0.35]{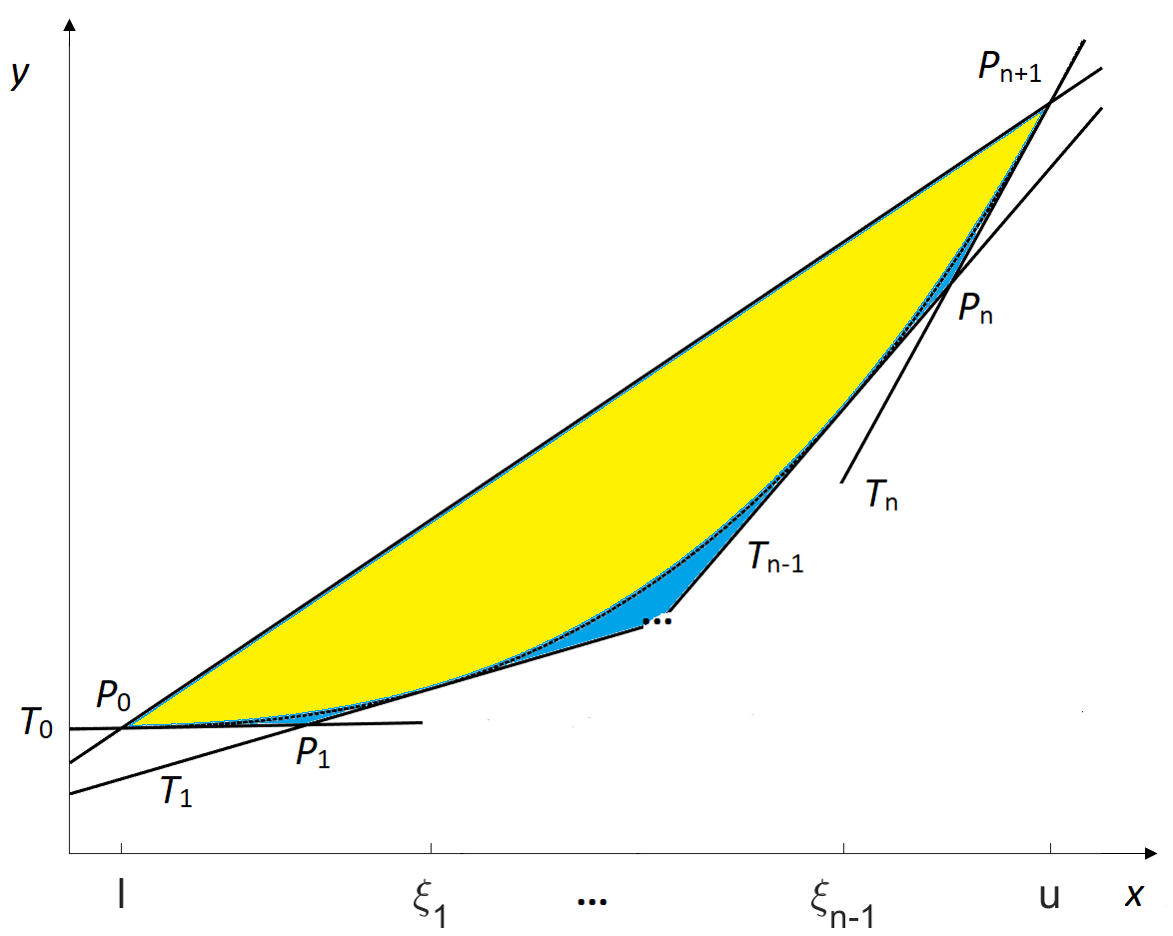}
  \caption{Piecewise-linear under-estimator}\label{fig:pl}
\end{figure}
We wish to compute the volume of the set $\invbreve{U}^*_f(\bm\xi):=\invbreve{S}^*_g(\ell,u)$.
To proceed, we work with the sequence $\tau_0,\tau_1,\ldots,\tau_{n+1}$ defined above.
Below and later, $\adet$ denotes the absolute value of the determinant.
\begin{theorem}\label{PLperspecvolformula}
\[
\vol(\invbreve{U}^*_f(\bm\xi))=\frac{1}{6}\sum_{i=1}^n
\adet\left(
  \begin{array}{ccc}
    \tau_0 & \tau_i & \tau_{i+1} \\
    g(\tau_0 ) & g(\tau_i) & g(\tau_{i+1}) \\
    1 & 1  & 1
  \end{array}
\right).
\]
\end{theorem}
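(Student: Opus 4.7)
The plan is to exploit two structural facts: that the perspective of a convex piecewise-linear function is itself piecewise-linear, and that the defining inequalities of $\invbreve{S}^*_g(\ell,u)$ are (except for $z\leq 1$) positively homogeneous in $(x,y,z)$. Since $g$ is convex and piecewise-linear on $[\ell,u]$ with breakpoints at the $\tau_i$, on each piece it has the form $g(t)=a_i t + b_i$, and hence $z\,g(x/z)=a_i x + b_i z$; globally, $z\,g(x/z)=\max_i(a_i x+b_i z)$. Therefore $\invbreve{U}^*_f(\bm\xi)=\invbreve{S}^*_g(\ell,u)$ is a polytope, and I can drop the $\convcl$ operator after verifying that the origin is recovered in the closure as $z\to 0^+$.

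Next, I would exploit the homogeneous structure. Substituting $x=z\tilde x$, $y=z\tilde y$ into every constraint other than $z\leq 1$ cancels the $z$'s uniformly, so the cross-section at any height $z\in(0,1]$ is
\[
\{(x,y) : x=z\tilde x,\; y=z\tilde y,\; (\tilde x,\tilde y)\in K\},
\]
where $K$ is the cross-section at $z=1$. Consequently the cross-section at height $z$ is a copy of $K$ scaled by $z$ in each of the two coordinate directions, so its area is $z^2\operatorname{area}(K)$. Cavalieri then yields
\[
\vol(\invbreve{U}^*_f(\bm\xi))=\int_0^1 z^2\operatorname{area}(K)\,dz=\tfrac{1}{3}\operatorname{area}(K).
\]

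Then I would identify $K$ explicitly. At $z=1$ the set reduces to the region in the $(x,y)$-plane bounded below by $y=g(x)$ and above by the chord through $(\ell,f(\ell))$ and $(u,f(u))$, for $x\in[\ell,u]$; note that $g(\ell)=f(\ell)$ and $g(u)=f(u)$, so the chord of $f$ is also the chord of $g$. Because $g$ is piecewise-linear and convex with breakpoints exactly at $\tau_1,\ldots,\tau_n$ and endpoints $\tau_0=\ell$, $\tau_{n+1}=u$ on the chord, $K$ is a convex polygon whose vertices are precisely $(\tau_i,g(\tau_i))$, $i=0,1,\ldots,n+1$.

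Finally, I would triangulate $K$ by the fan with apex $(\tau_0,g(\tau_0))$, yielding the $n$ triangles with vertex sets $\{(\tau_0,g(\tau_0)),(\tau_i,g(\tau_i)),(\tau_{i+1},g(\tau_{i+1}))\}$ for $i=1,\ldots,n$. Each such triangle has area equal to one-half of the listed absolute determinant; summing and multiplying by $1/3$ produces the claimed formula. There is no real obstacle: the only subtlety is checking that the vertices $(\tau_i,g(\tau_i))$ are in convex position (immediate from strict convexity of $g$ across consecutive breakpoints, and triangles collapse to zero-area terms precisely when they are collinear, which is consistent with the determinantal form of the formula). Handling the $z=0$ face is also harmless since that face contributes measure zero to the volume integral.
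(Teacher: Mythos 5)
Your proof is correct and takes essentially the same route as the paper: the set is a pyramid (cone) with apex at the origin over its $z=1$ cross-section, so the volume is one third of the base area, and the base is fan-triangulated from $(\tau_0,g(\tau_0))$ into the triangles $\conv\{P_0,P_i,P_{i+1}\}$, each contributing half of the stated absolute determinant. Your homogeneity/Cavalieri argument simply spells out the pyramid structure that the paper asserts directly.
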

\begin{proof}
We wish to compute the volume of the set $\invbreve{U}^*_f(\bm\xi)$.
This set is a pyramid with apex $(x,y,z)=(0,0,0)$ and base
equal to the intersection of $\invbreve{U}^*_f(\bm\xi)$ with the
hyperplane defined by the equation $z=1$. The height
of the apex over the base is unity. So  the volume of $\invbreve{U}^*_f(\bm\xi)$
is simply the area of the base divided by 3.
We will compute the area of the base by straightforward 2-d triangulation.
Our triangles are $\conv\{P_0,P_i,P_{i+1} \}$,
for
$i=1,\ldots,n$.
The area of each triangle
is $1/2$ of the absolute determinant of an appropriate $3\times 3$ matrix.
The formula follows.
\end{proof}
\begin{corollary}\label{cor:lineartime}
Assuming oracle access to $f$ and $f'$,
we can compute
$\vol(\invbreve{U}^*_f(\bm\xi))$ in $\mathcal{O}(n)$ time.
\end{corollary}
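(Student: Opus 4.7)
The plan is to account carefully for the work done by the formula in Theorem \ref{PLperspecvolformula}, showing that each ingredient is computable in $\mathcal{O}(1)$ oracle calls and arithmetic operations, and that the number of such ingredients is linear in $n$.

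First, I would precompute $f(\xi_i)$ and $f'(\xi_i)$ for $i=0,1,\ldots,n$. This uses $2(n+1)$ oracle calls, hence $\mathcal{O}(n)$ time. With these in hand, each interior abscissa $\tau_i$ for $i=1,\ldots,n$ is obtained from the displayed formula
\[
\tau_i=\frac{\left[f(\xi_i)-f'(\xi_i)\xi_i\right]-\left[f(\xi_{i-1})-f'(\xi_{i-1})\xi_{i-1}\right]}{f'(\xi_{i-1})-f'(\xi_i)}
\]
in $\mathcal{O}(1)$ arithmetic operations, and $\tau_0:=\ell$, $\tau_{n+1}:=u$ are free. The value $g(\tau_i)$ is the ordinate of the intersection \ref{eqint:point}, namely $f(\xi_i)+f'(\xi_i)(\tau_i-\xi_i)$, again computable in $\mathcal{O}(1)$ from cached quantities (and $g(\tau_0)=f(\ell)$, $g(\tau_{n+1})=f(u)$ are already known). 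Thus the entire sequence $\{(\tau_i,g(\tau_i))\}_{i=0}^{n+1}$ is assembled in $\mathcal{O}(n)$ total work.

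Next I would note that each term in the sum of Theorem \ref{PLperspecvolformula} is the absolute value of a $3\times 3$ determinant whose last row is $(1,1,1)$, expandable along that row into a fixed arithmetic expression in six real numbers; this takes $\mathcal{O}(1)$ time per term. Since the sum ranges over $i=1,\ldots,n$, summing the $n$ absolute determinants (and dividing by $6$) costs $\mathcal{O}(n)$ additional time.

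Adding the precomputation and the summation gives an overall running time of $\mathcal{O}(n)$ arithmetic operations and oracle calls, which is the claim. There is no real obstacle here: the corollary is essentially a bookkeeping consequence of the explicit formula in Theorem \ref{PLperspecvolformula}, given that all quantities entering that formula admit closed-form expressions in terms of $f(\xi_i)$, $f'(\xi_i)$, and $\xi_i$.
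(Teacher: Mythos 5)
Your proposal is correct and matches the paper's intent: the paper states the corollary as an immediate consequence of Theorem \ref{PLperspecvolformula}, and your bookkeeping argument (cache $f(\xi_i)$, $f'(\xi_i)$ with $\mathcal{O}(n)$ oracle calls, compute each $\tau_i$, $g(\tau_i)$, and $3\times 3$ determinant in $\mathcal{O}(1)$, and sum the $n$ terms) is exactly the implicit argument.
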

\section{Analysis of convex power functions}\label{sec:xtothep}

Convex power functions constitute a broad and flexible class of
increasing convex univariate functions, useful in a wide variety of applications.
Additionally, an ability to handle the power functions $x^k$ for integers $k\geq 2$, already
gives us a lower-bounding method for $f(x):=\exp(x)$ by truncating its Maclaurin series
$\sum_{k=1}^\infty x^k/k!$, and working termwise (on the terms $k\geq 2$).
More generally, we could approach any univariate function $f:\mathbb{R}\rightarrow \mathbb{R}_+$
like this, as long as its Maclaurin series has all
nonnegative coefficients; i.e., when all derivatives at $0$ are nonnegative.
For example, $1/(1-x)^k$ with integer $k\geq 1$ (i.e., the geometric series and its
derivatives), $\sinh(x)$ and $\tan(x)$ for $x<\pi/2$, and $\arcsin(x)$ for $x<1$.
Therefore, analyzing relaxations for power functions,  can have rather broad applicability.

For convenience, let $\invbreve{U}^*_p(\bm\xi)$
denote $\invbreve{U}^*_f(\bm\xi)$,
 with $f(x):=x^p$, $p>1$.

 \subsection{Quadratics}\label{sec:xtothep:quadratics}
We will see that equally-spaced linearization points minimizes the volume of the
relaxation $\invbreve{U}^*_p(\bm\xi)$ when $p=2$.

\begin{theorem}\label{thm:p2n}
Given $n\geq 2$, $0\leq \xi_0:=\ell <\xi_1<\dots<\xi_{n-1}< u=:\xi_n$, we have that
$\xi_i:=\ell + \frac{i}{n}(u-\ell)$, for $i=1,\ldots, n-1$, is the unique minimizer of $\vol(\invbreve{U}^*_2(\bm\xi))$, and the minimum volume is
$\frac{1}{18}(u-\ell)^3+\frac{(u-\ell)^3}{36n^2}$.
\end{theorem}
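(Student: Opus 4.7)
My plan is to apply Theorem~\ref{PLperspecvolformula} with $f(x)=x^2$, derive an explicit polynomial expression for $\vol(\invbreve{U}^*_2(\bm\xi))$, and then use first- and second-order conditions to pin down the unique global minimizer. For $f(x)=x^2$ one computes $\tau_i=(\xi_{i-1}+\xi_i)/2$ and $g(\tau_i)=\xi_{i-1}\xi_i$, and on each subinterval $[\tau_i,\tau_{i+1}]$ the function $g$ coincides with the tangent line $y=2\xi_i x-\xi_i^2$. The base of the pyramid in the plane $z=1$ is then the planar region bounded above by the chord through $(\ell,\ell^2)$ and $(u,u^2)$ and below by the piecewise-linear graph of $g$.

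Integrating both curves over $[\ell,u]$ and telescoping the resulting sum for $\int_\ell^u g(x)\,dx$, I expect to obtain the identity
\[
\vol(\invbreve{U}^*_2(\bm\xi)) \;=\; \tfrac{1}{12}\Bigl[(u-\ell)^3 + u\ell(u-\ell) - S(\bm\xi)\Bigr],\quad S(\bm\xi):=\sum_{i=0}^{n-1}\xi_i\xi_{i+1}(\xi_{i+1}-\xi_i).
\]
Because the bracketed constant is independent of the interior $\xi_j$, minimizing the volume over the open simplex $\{\ell<\xi_1<\cdots<\xi_{n-1}<u\}$ is equivalent to maximizing the polynomial $S$. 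Computing the partial derivatives gives $\partial S/\partial\xi_j=(\xi_{j+1}-\xi_{j-1})(\xi_{j-1}+\xi_{j+1}-2\xi_j)$ for $j=1,\ldots,n-1$, and since $\xi_{j+1}>\xi_{j-1}$, every interior stationary point must satisfy $\xi_j=(\xi_{j-1}+\xi_{j+1})/2$. Together with $\xi_0=\ell$ and $\xi_n=u$, this uniquely forces the arithmetic progression $\xi_j=\ell+(j/n)(u-\ell)$.

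To promote this unique critical point to the unique global minimizer, I will show that the Hessian $H$ of $S$ is negative-definite throughout the open simplex. A direct computation shows $H$ is tridiagonal with $H_{jj}=-2(\xi_{j+1}-\xi_{j-1})$, $H_{j,j+1}=2(\xi_{j+1}-\xi_j)$, and $H_{j,j-1}=2(\xi_j-\xi_{j-1})$. Writing $d_j:=\xi_{j+1}-\xi_j>0$ and regrouping the quadratic form by these gaps, I expect to recover the clean identity
\[
v^{\top}H v \;=\; -2d_0 v_1^2 \;-\; 2d_{n-1} v_{n-1}^2 \;-\; 2\sum_{j=1}^{n-2} d_j(v_j-v_{j+1})^2,
\]
which is strictly negative for every $v\neq 0$. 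Hence $S$ is strictly concave on the domain, so its unique interior critical point is the unique global maximizer.

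Finally, I will substitute $\xi_j=\ell+jh$ with $h=(u-\ell)/n$ into $S$, expand $(\ell+jh)(\ell+(j+1)h)$, and evaluate $\sum_{j=0}^{n-1}(2j+1)=n^2$ and $\sum_{j=0}^{n-1}j(j+1)=(n-1)n(n+1)/3$ to obtain $S=(u-\ell)\ell^2+(u-\ell)^2\ell+(u-\ell)^3(n^2-1)/(3n^2)$. Plugging back into the volume formula, the $\ell$-dependent pieces cancel and the remainder simplifies to $\tfrac{1}{18}(u-\ell)^3+\tfrac{1}{36n^2}(u-\ell)^3$. The main obstacle is recognizing the particular regrouping of $v^\top Hv$ by the gap weights $d_j$ that exhibits strict concavity directly; without it, one would need a messier boundary-analysis argument to rule out maxima outside the interior.
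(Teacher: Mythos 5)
Your proposal is correct and follows essentially the same route as the paper: the same volume expression (your bracketed formula is algebraically identical to the paper's $\frac{1}{12}\bigl[\sum_i \xi_i\xi_{i-1}(\xi_{i-1}-\xi_i)+u^3-2u^2\ell+2u\ell^2-\ell^3\bigr]$), the same first-order condition forcing the arithmetic progression, a Hessian-definiteness argument for global optimality, and the same final evaluation. The only cosmetic difference is in certifying convexity: the paper invokes weak diagonal dominance of the tridiagonal Hessian (giving positive semidefiniteness), while your gap-weighted sum-of-squares regrouping of $v^{\top}Hv$ is valid and in fact yields strict definiteness.
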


\begin{proof}
The intersection points $P_i$ are $(\frac{\xi_{i-1}+\xi_i}{2},\xi_{i-1}\xi_i)$.
We have $\tau_i=\frac{\xi_{i-1}+\xi_i}{2}$ for $i=1,\ldots, n+1$,
and
\begin{align*}
\vol(\invbreve{U}^*_2(\bm\xi))&=\frac{1}{6}\sum_{i=1}^n
\adet\begin{pmatrix}
    \tau_0 & \tau_i & \tau_{i+1} \\
    g(\tau_0 ) & g(\tau_i) & g(\tau_{i+1}) \\
    1 & 1  & 1
  \end{pmatrix}\\
  &=\frac{1}{12}\sum_{i=1}^{n}(\xi_{i+1}-\xi_{i-1})(\xi_i-\ell)^2\\
  &=\frac{1}{12}\left[\sum_{i=1}^{n}\xi_i\xi_{i-1}(\xi_{i-1}-\xi_{i})+u^3-2u^2\ell+2u\ell^2-\ell^3\right],
  \end{align*}
and
\begin{align*}
\frac{\partial \vol(\invbreve{U}^*_2(\bm\xi))}{\partial \xi_i} &= \frac{1}{12}(\xi_{i+1}-\xi_{i-1})(2\xi_i-\xi_{i+1}-\xi_{i-1}), ~\text{for}~i=1,\ldots, n-1,\\
\frac{\partial^2 \vol(\invbreve{U}^*_2(\bm\xi))}{\partial \xi_i^2} &= \frac16(\xi_{i+1}-\xi_{i-1}), ~\text{for}~i=1,\ldots, n-1 ,\\
\frac{\partial^2 \vol(\invbreve{U}^*_2(\bm\xi))}{\partial \xi_i\partial \xi_{i+1}} &= \frac16(\xi_i-\xi_{i+1}), ~\text{for}~i=1,\ldots, n-2.
\end{align*}
Therefore, $\nabla^2 \vol(\invbreve{U}^*_2(\bm\xi))$ is a tridiagonal matrix. It is easy to verify that $\nabla^2 \vol(\invbreve{U}^*_2(\bm\xi))$ is  diagonally dominant because $(\xi_{i+1}-\xi_{i-1})= (\xi_{i+1}-\xi_i) +(\xi_{i}-\xi_{i-1})$, thus $\nabla^2 \vol(\invbreve{U}^*_2(\bm\xi))$ is positive semidefinite, i.e., $\vol(\invbreve{U}^*_2(\bm\xi))$ is convex.

The global minimizer satisfies $\nabla\vol(\invbreve{U}^*_2(\bm\xi))=0$, i.e., $2\xi_i-\xi_{i+1}-\xi_{i-1}=0$ for $i=1,\ldots, n-1$. Solving these equations gives us the equally-spaced points. Now a simple calculation
gives the  minimum volume as
\[
\vol(\invbreve{U}^*_2(\bm\xi)) = \frac{1}{12}\left(\frac23(u-\ell)^3+\frac{1}{3n^2}(u-\ell)^3\right)=\frac{1}{18}(u-\ell)^3+\frac{(u-\ell)^3}{36n^2}.
\]
\end{proof}

Letting $n$ go to infinity, we recover the volume of the perspective relaxation for the quadratic $\invbreve{S}^*_2 := \invbreve{S}^*_{f}(\ell,u)$ where $f(x):=x^2$.
\begin{corollary}[\cite{Perspec2019}]\label{cor:S2}
$~\vol(\invbreve{S}^*_2)=\frac{1}{18}(u-\ell)^3$.
\end{corollary}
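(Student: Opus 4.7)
The plan is to realize Corollary~\ref{cor:S2} as the $n\to\infty$ limit of Theorem~\ref{thm:p2n}, by squeezing $\vol(\invbreve{S}^*_2)$ between matching upper and lower bounds.

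For the upper bound, I would first establish the containment $\invbreve{S}^*_2\subseteq \invbreve{U}^*_2(\bm\xi)$ for every admissible $\bm\xi$. Two observations suffice: because $\xi_0=\ell$ and $\xi_n=u$ are always linearization points, $g(\ell)=f(\ell)$ and $g(u)=f(u)$, so the chord upper bound appearing in $\invbreve{U}^*_2(\bm\xi)=\invbreve{S}^*_g(\ell,u)$ is identical to the one in $\invbreve{S}^*_2$; and because $g\le f$ on $[\ell,u]$, the perspective lower bound $y\ge z\,g(x/z)$ is weaker than $y\ge z\,f(x/z)$. Hence $\vol(\invbreve{S}^*_2)\le \vol(\invbreve{U}^*_2(\bm\xi))$ for every $\bm\xi$, and minimizing the right-hand side via Theorem~\ref{thm:p2n} and letting $n\to\infty$ gives $\vol(\invbreve{S}^*_2)\le \tfrac{1}{18}(u-\ell)^3$.

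For the matching lower bound, I would pass to a refining sequence $\{\bm\xi^{(n)}\}$ of linearization-point sets (e.g.\ by repeatedly bisecting an equally-spaced partition, so the partitions nest). Refinement can only raise the piecewise-linear under-estimator $g_n$ pointwise, so the sets $\invbreve{U}^*_2(\bm\xi^{(n)})$ form a decreasing nested family, and $g_n\to f$ uniformly on $[\ell,u]$. Any $(x,y,z)$ with $z>0$ lying in every $\invbreve{U}^*_2(\bm\xi^{(n)})$ satisfies $y\ge z\,g_n(x/z)$ for all $n$; passing to the limit yields $y\ge z\,f(x/z)$, so $(x,y,z)\in\invbreve{S}^*_2$. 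The $z=0$ boundary is covered by the closure operation present in both definitions. Thus $\invbreve{S}^*_2=\bigcap_n \invbreve{U}^*_2(\bm\xi^{(n)})$, and downward continuity of Lebesgue measure on decreasing sequences of bounded sets gives $\vol(\invbreve{S}^*_2)=\lim_n \vol(\invbreve{U}^*_2(\bm\xi^{(n)}))=\tfrac{1}{18}(u-\ell)^3$.

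I expect the main obstacle to be the nested-intersection step: one must verify that successive refinement genuinely nests the $\invbreve{U}^*_2(\bm\xi^{(n)})$ and that uniform convergence of $g_n$ is strong enough to push the nonstrict perspective inequality through the limit. A much shorter self-contained route sidesteps Theorem~\ref{thm:p2n} entirely, since $\invbreve{S}^*_2$ is a pyramid of height one with apex $(0,0,0)$ and base (in $z=1$) the lens bounded below by $y=x^2$ and above by the chord through $(\ell,\ell^2)$ and $(u,u^2)$; the area of that lens integrates to the classical $\tfrac16(u-\ell)^3$, yielding the claimed volume upon dividing by~$3$. I would nonetheless present the limit argument, since the corollary is explicitly framed as recovering the $n=\infty$ value.
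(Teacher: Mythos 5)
Your proposal is correct and takes essentially the paper's own route: Corollary \ref{cor:S2} is presented there precisely as the $n\to\infty$ limit of Theorem \ref{thm:p2n} (with the value itself credited to \cite{Perspec2019}), and your two-sided squeeze---the containment $\invbreve{S}^*_2\subseteq\invbreve{U}^*_2(\bm\xi)$ together with the nested-refinement/monotone-measure argument---simply supplies the justification that the paper leaves implicit. Your alternative direct computation (a pyramid of height one over the $z=1$ lens of area $\tfrac16(u-\ell)^3$) is also sound and mirrors the pyramid argument of Theorem \ref{PLperspecvolformula}.
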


We can also now easily see that by
using the perspective of our piecewise-linear under-estimator, even with only one (well-placed) non-boundary linearization point, we always outperform the na\"{i}ve  relaxation $\invbreve{S}^0_2 := \invbreve{S}^0_{f}(\ell,u)$, where $f(x):=x^2$.

\begin{corollary}\label{rem:compare}
$\vol(\invbreve{U}^*_2(\bm\xi)) \leq \vol(\invbreve{S}^0_2)$, and with equality only if $n=1$ and $\ell=0$.
\end{corollary}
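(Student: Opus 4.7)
The plan is to combine the explicit minimum-volume formula from Theorem~\ref{thm:p2n} with an explicit formula for $\vol(\invbreve{S}^0_2)$, and then show the difference is nonnegative with equality exactly when $n=1$ and $\ell=0$. I read the statement as asserting the inequality for the $\bm\xi$ that minimizes $\vol(\invbreve{U}^*_2(\bm\xi))$, consistent with the surrounding ``well-placed'' discussion.

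First I would compute $\vol(\invbreve{S}^0_2)$ (or recall it from \cite{Perspec2019}) by slicing at $z \in [0,1]$: the cross-section is $\{(x,y) : \ell z \le x \le uz,\ x^2 \le y \le (u+\ell)x - u\ell z\}$, so that
\[
\vol(\invbreve{S}^0_2) = \int_0^1\!\int_{\ell z}^{uz}\!\bigl[(u+\ell)x - u\ell z - x^2\bigr]\, dx\, dz = \tfrac{1}{18}(u-\ell)^3 + \tfrac{1}{36}(u^3-\ell^3),
\]
after routine simplification. Subtracting the minimum $\tfrac{1}{18}(u-\ell)^3 + \tfrac{(u-\ell)^3}{36 n^2}$ from Theorem~\ref{thm:p2n}, the common $\tfrac{1}{18}(u-\ell)^3$ term cancels and factoring yields
\[
\vol(\invbreve{S}^0_2) - \min_{\bm\xi}\vol(\invbreve{U}^*_2(\bm\xi)) = \tfrac{u-\ell}{36}\!\left[u^2 + u\ell + \ell^2 - \tfrac{(u-\ell)^2}{n^2}\right].
\]

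The last step is a short sign check. The identity $u^2 + u\ell + \ell^2 - (u-\ell)^2 = 3u\ell \geq 0$ combined with $(u-\ell)^2/n^2 \leq (u-\ell)^2$ shows the bracket is nonnegative, which establishes the inequality. For equality throughout one needs both $1/n^2 = 1$ (forcing $n=1$) and $u\ell = 0$; since $u > \ell \geq 0$ implies $u > 0$, the latter reduces to $\ell = 0$. This gives exactly the claimed equality condition.

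There is no real obstacle here: the only genuine computation is the double integral for $\vol(\invbreve{S}^0_2)$, which is routine (and perhaps already recorded in \cite{Perspec2019}); everything else is an elementary factoring and sign inspection.
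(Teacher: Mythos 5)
Your proof is correct and takes essentially the same route as the paper: both compare the minimum-volume formula of Theorem \ref{thm:p2n} against $\vol(\invbreve{S}^0_2)=\frac{1}{18}(u-\ell)^3+\frac{1}{36}(u^3-\ell^3)$ (which the paper simply cites from \cite{Perspec2019} rather than re-deriving by integration). Your factored bracket with $u^2+u\ell+\ell^2-(u-\ell)^2=3u\ell\ge 0$ and $1/n^2\le 1$ is just the paper's termwise chain $\frac{(u-\ell)^3}{36n^2}\le\frac{(u-\ell)^3}{36}\le\frac{u^3-\ell^3}{36}$ in disguise, with the identical equality analysis ($n=1$ and $\ell=0$).
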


\begin{proof}
$\vol(\invbreve{S}^0_2)=\frac{1}{18}(u-\ell)^3 + (u^3-\ell^3)/36$ (see \cite{Perspec2019}). Notice that
\[
\frac{(u-\ell)^3}{36n^2}\leq \frac{(u-\ell)^3}{36}\le \frac{u^3-\ell^3}{36}.
\]
The first inequality is strict when $n>1$, and the second is strict when $\ell>0$.
\end{proof}

\subsection{Non-quadratic convex power functions}\label{sec:xtothep:nonquadratics}

Considering $p\not=2$,
even for one non-boundary linearization point,
$\vol(\invbreve{U}^*_p(\bm\xi))$
is not generally convex in $\xi_1$ for $\bm\xi=(\ell,\xi_1,u)$. However, we establish
with Theorem \ref{thm:localmin}
 that any stationary point of $\vol(\invbreve{U}^*_p(\bm\xi))$ is a strict local minimizer.
 Therefore, using any NLP algorithm that can find a stationary point, we are assured that such a
 point is a strict local minimizer.
 Furthermore, we establish with  Theorem \ref{conj:pn}
 that when $1<p\le 2$, we have that $\vol(\invbreve{U}^*_p(\bm\xi))$ is indeed convex in $(\xi_1,\dots,\xi_{n-1})$. Therefore, for $1<p\le 2$,
 using any NLP algorithm that can find a stationary point, we will in fact find a global minimum. For $p>2$, we simplify the gradient condtion $\nabla\vol(\invbreve{U}^*_p(\bm\xi))=0$ and establish with Theorem \ref{thm:unique_stationary} that the volume function has a unique stationary point. We also establish with Theorem \ref{conj:increasing} that the optimal location of each linearization point is increasing in $p$ on $(0,\infty)$. Furthermore, we establish with Theorem \ref{thm:Newtonn} that the iterates of Newton's method have monotonic convergence on this function.
\begin{theorem}\label{thm:localmin}
  For $0\leq \ell < u$, $p>1$, and $\bm\xi:=(\ell,\xi_1,\dots,\xi_{n-1},u)$ $(\ell<\xi_1<\dots<\xi_{n-1}<u)$, if $\bm\xi$ satisfies $\nabla\vol(\invbreve{U}^*_p(\bm\xi))=0$, then $\nabla^2\vol(\invbreve{U}^*_p(\bm\xi))$ is positive definite.
\end{theorem}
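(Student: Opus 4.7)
The plan is to compute the gradient and Hessian of $V := \vol(\invbreve{U}^*_p(\bm\xi))$ explicitly and establish positive definiteness at stationary points by a weak diagonal-dominance argument.

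First, I would write $V = \frac{1}{3}\bigl[\frac{(u-\ell)(f(\ell)+f(u))}{2} - \int_\ell^u g(x)\,dx\bigr]$ and differentiate using the envelope theorem: since the piecewise-linear under-estimator $g$ equals the tangent $T_j$ on $[\tau_j,\tau_{j+1}]$ and the two tangents meeting at $\tau_j$ agree there, all Leibniz boundary contributions cancel, leaving $\partial V/\partial \xi_j = -\frac{1}{6}f''(\xi_j)\bigl[(\tau_{j+1}-\xi_j)^2 - (\tau_j-\xi_j)^2\bigr]$. Hence $\nabla V = 0$ if and only if $\xi_j$ is the midpoint of $[\tau_j,\tau_{j+1}]$ for each $j$. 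Since $\tau_i$ depends only on $\xi_{i-1}$ and $\xi_i$, the Hessian is tridiagonal; differentiating once more, using the explicit partials $\partial \tau_i/\partial \xi_{i-1}$ and $\partial \tau_i/\partial \xi_i$, and substituting the midpoint condition $\tau_{j+1}-\xi_j = \xi_j-\tau_j$ produces
\[
3H_{jj} = 2s_j - s_j^2(c_{j-1}+c_j), \qquad 3H_{j,j+1} = -s_j s_{j+1} c_j,
\]
where $s_j := \frac{1}{2}f''(\xi_j)(\tau_{j+1}-\tau_j) > 0$ and $c_j := 1/(f'(\xi_{j+1})-f'(\xi_j)) > 0$.

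To show positive definiteness, I would apply the symmetric rescaling $\tilde H := D^{-1/2} H D^{-1/2}$ with $D := \operatorname{diag}(s_1,\ldots,s_{n-1})$, giving a tridiagonal matrix with diagonal $(2 - s_j(c_{j-1}+c_j))/3$ and off-diagonal $-c_j\sqrt{s_j s_{j+1}}/3$. This matrix is irreducible since all off-diagonals are strictly negative, so if the weak diagonal-dominance inequality
\[
s_j(c_{j-1}+c_j) + c_{j-1}\sqrt{s_{j-1}s_j} + c_j\sqrt{s_j s_{j+1}} \leq 2
\]
holds at every interior $j$ (with missing terms at boundary rows interpreted as zero) and strictly at $j=1$ and $j=n-1$, then irreducible weak diagonal dominance with positive diagonal forces $\tilde H$, and hence $H$, to be positive definite.

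The main obstacle is verifying this weak diagonal-dominance inequality at a stationary point. Using the $f''$-weighted centroid identity $\tau_j = \int_{\xi_{j-1}}^{\xi_j} t\,f''(t)\,dt \big/ \int_{\xi_{j-1}}^{\xi_j} f''(t)\,dt$ and specializing to $f(x) = x^p$, each of the quantities $s_j c_j$, $\sqrt{s_{j-1}s_j}\,c_{j-1}$, etc., becomes a scale-invariant function of the consecutive ratio $\xi_j/\xi_{j+1}$ alone, so the inequality reduces to an algebraic relation in these ratios. I expect it to be an AM-GM-type estimate on the geometric sequence $1,\rho,\rho^2,\ldots$ with $\rho = \xi_j/\xi_{j+1}$. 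The known case $p=2$ (Theorem \ref{thm:p2n}) corresponds to exact equality at interior rows and strict inequality at boundary rows, which is precisely the irreducibility-based positive-definiteness pattern and lends credence to this path. Strictness at the boundary rows $j=1$ and $j=n-1$ comes for free because those rows lose one off-diagonal term, reflecting $\ell < \xi_1$ and $\xi_{n-1} < u$; some additional care is likely needed for $1 < p < 2$, where $f''$ is unbounded near $0$.
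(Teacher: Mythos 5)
Your setup is correct and consistent with the paper's explicit computations for $f(x)=x^p$: the envelope-theorem gradient $\partial V/\partial\xi_j=-\tfrac16 f''(\xi_j)\bigl[(\tau_{j+1}-\xi_j)^2-(\tau_j-\xi_j)^2\bigr]$, the characterization of stationarity as the midpoint condition $\xi_j=\tfrac12(\tau_j+\tau_{j+1})$, and the Hessian entries $3H_{jj}=2s_j-s_j^2(c_{j-1}+c_j)$, $3H_{j,j+1}=-s_js_{j+1}c_j$ at a stationary point all check out. The gap is exactly the step you flag and then leave unproved: the scaled weak diagonal-dominance inequality $s_j(c_{j-1}+c_j)+c_{j-1}\sqrt{s_{j-1}s_j}+c_j\sqrt{s_js_{j+1}}\le 2$. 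This is not a routine AM--GM estimate. It holds with \emph{equality} at every interior row when $p=2$, so any proof must be sharp, and away from $p=2$ the margin is razor-thin: for $p=3$, $\ell=0$, $u=1$, $n=4$, the stationary point is $\bm\xi\approx(0,\,0.3757,\,0.6080,\,0.8121,\,1)$ and the interior row gives $\approx 1.9966$. Moreover, your reduction heuristic is off: at stationarity $c_{j-1}\sqrt{s_{j-1}s_j}$ depends on \emph{both} ratios $\xi_{j-1}/\xi_j$ and $\xi_j/\xi_{j+1}$ (coupled through the midpoint condition), and the optimal points are not in geometric progression (the consecutive ratios in the example above are $0.618$, $0.749$, $0.812$), so there is no ``geometric sequence'' AM--GM to invoke. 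The claim that strictness at the boundary rows is ``for free'' also needs its own (separate, if easier) inequality, since the boundary diagonal still carries $c_0$ (resp.\ $c_{n-1}$). As written, the proof is incomplete at its central step.

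For comparison, the paper works with the same tridiagonal Hessian at a stationary point but avoids any delicate inequality by exploiting homogeneity: for $f(x)=x^p$, each $\partial V/\partial\xi_i$ is homogeneous of degree $p$ in $(\xi_{i-1},\xi_i,\xi_{i+1})$, so Euler's relation gives the exact identity $\frac{\partial^2 V}{\partial\xi_i^2}=\frac{p}{\xi_i}\frac{\partial V}{\partial\xi_i}+\frac{\xi_{i-1}}{\xi_i}b_{i-1}+\frac{\xi_{i+1}}{\xi_i}b_i$ with $b_i:=-H_{i,i+1}>0$ (positivity via the paper's Lemma~\ref{lem:positive}). At a stationary point the first term vanishes, and the Hessian factors exactly as $\lambda e_1e_1^\top+PDP^\top$ with $P$ unit lower bidiagonal (subdiagonal entries $-\xi_{i-1}/\xi_i$), $D$ positive diagonal, $\lambda\ge0$ --- hence positive definite for every $p>1$. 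Equivalently, this is generalized diagonal dominance with weight vector $(\xi_1,\dots,\xi_{n-1})$ instead of your $(\sqrt{s_1},\dots,\sqrt{s_{n-1}})$: interior rows satisfy $\xi_iH_{ii}-\xi_{i-1}b_{i-1}-\xi_{i+1}b_i=0$ \emph{identically}, with slack $\ell b_0\ge0$ and $u\,b_{n-1}>0$ at the two boundary rows. So if you keep your framework but replace the scaling weights $\sqrt{s_j}$ by $\xi_j$, your own gradient formula plus the degree-$(p+1)$ homogeneity of the volume turns the problematic inequality into an identity and closes the proof.
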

\begin{proof}
The intersection points $P_i$ are $\left(\frac{p-1}{p}\frac{\xi_i^p-\xi_{i-1}^p}{\xi_i^{p-1}-\xi_{i-1}^{p-1}},(p-1)\xi_{i-1}^{p-1}\xi_i^{p-1}\frac{\xi_i-\xi_{i-1}}{\xi_i^{p-1}-\xi_{i-1}^{p-1}}\right)$. Let $\tau_{n+1}:=u$, and $\tau_i :=\frac{p-1}{p}\frac{\xi_i^p-\xi_{i-1}^p}{\xi_i^{p-1}-\xi_{i-1}^{p-1}}$ for $i=1,\ldots, n$.
\begin{align*}
&\vol(\invbreve{U}^*_p(\bm\xi))
=\frac{1}{6}\sum_{i=1}^n
\adet\begin{pmatrix}
    \tau_0 & \tau_i & \tau_{i+1} \\
    g(\tau_0 ) & g(\tau_i) & g(\tau_{i+1}) \\
    1 & 1  & 1
  \end{pmatrix}\\
  &~=-\frac{(p-1)^2}{6p}\sum_{i=1}^n\frac{(\xi_i^p-\xi_{i-1}^p)^2}{\xi_i^{p-1}-\xi_{i-1}^{p-1}} +\frac16((p-1)u^{p+1}-u^p\ell+u\ell^p-(p-1)\ell^p)\\
  &~=-\frac{(p-1)^2}{6p}\sum_{i=1}^n\frac{\xi_{i-1}^{p-1}\xi_i^{p-1}(\xi_i-\xi_{i-1})^2}{\xi_i^{p-1}-\xi_{i-1}^{p-1}}+\frac{(p-1)}{6p}(u^{p+1}-\ell^{p+1}) -\frac16(u^p\ell-u\ell^p).
\end{align*}
Therefore, for $i=1,\ldots, n-1$, $\frac{\partial \vol(\invbreve{U}^*_p(\bm\xi))}{\partial \xi_i}=$
\begin{align*}
  & -\frac{(p-1)\xi_i^{p-2}}{6p}\left(\left(\frac{\xi_i^{p}+(p-1)\xi_{i+1}^{p}-p\xi_i\xi_{i+1}^{p-1}}{\xi_{i+1}^{p-1}-\xi_i^{p-1}}\right)^2-\left(\frac{\xi_i^{p}+(p-1)\xi_{i-1}^{p}-p\xi_i\xi_{i-1}^{p-1}}{\xi_{i-1}^{p-1}-\xi_i^{p-1}}\right)^2\right),
\end{align*}
and for $i=1,\ldots, n-2$, $\frac{\partial^2 \vol(\invbreve{U}^*_p(\bm\xi))}{\partial \xi_i\partial \xi_{i+1}}=$
\begin{align*}
&-\frac{(p-1)^2}{3p}\frac{\xi_i^{p-2}\xi_{i+1}^{p-2}[(p-1)\xi_{i+1}^{p}+\xi_i^p-p\xi_{i+1}^{p-1}\xi_i)][\xi_{i+1}^{p}+(p-1)\xi_i^p-p\xi_{i+1}\xi_i^{p-1}]}{(\xi_{i+1}^{p-1}-\xi_{i}^{p-1})^3}.
\end{align*}
For simplicity, we denote for $i=0,1,\dots,n-1$,
$$
b_i:=\frac{(p-1)^2}{3p}\frac{\xi_i^{p-2}\xi_{i+1}^{p-2}[(p-1)\xi_{i+1}^{p}+\xi_i^p-p\xi_{i+1}^{p-1}\xi_i)][\xi_{i+1}^{p}+(p-1)\xi_i^p-p\xi_{i+1}\xi_i^{p-1}]}{(\xi_{i+1}^{p-1}-\xi_{i}^{p-1})^3}.
$$
By Lemma \ref{lem:positive} (See Appendix), we have $b_0\ge0$ and $b_i>0$, for $i=1,2,\ldots, n-1$. Then, for $i=1,2,\ldots, n-1$,
\begin{align*}
  \frac{\partial^2 \vol(\invbreve{U}^*_p(\bm\xi))}{\partial \xi_i^2} & = \frac{p}{\xi_i}\frac{\partial \vol(\invbreve{U}^*_p(\bm\xi))}{\partial \xi_i} +\frac{\xi_{i-1}}{\xi_i}b_{i-1}+\frac{\xi_{i+1}}{\xi_i}b_i~.
\end{align*}

If $\bm\xi$ satisfies $\nabla\vol(\invbreve{U}^*_p(\bm\xi))=0$, then $\nabla^2\vol(\invbreve{U}^*_p(\bm\xi))$ is an $(n-1)\times(n-1)$ symmetric tridiagonal matrix with off-diagonal elements $-b_1,\dots,-b_{n-2}$ and diagonal elements $a_1,\dots,a_{n-1}$ where $a_i := \frac{\xi_{i-1}}{\xi_i}b_{i-1}+\frac{\xi_{i+1}}{\xi_i}b_{i}$.

Notice that $\nabla^2\vol(\invbreve{U}^*_p(\bm\xi))=\lambda e_1e_1^\top + M$, where $\lambda=\frac{\xi_0b_0}{\xi_1}\ge0$, $M :=P D P^\top$, $D := \mathrm{diag}(\frac{\xi_2}{\xi_1}b_1,\allowbreak\frac{\xi_3}{\xi_2}b_2,\dots,\frac{\xi_{n}}{\xi_{n-1}}b_{n-1})$, and $P=[p_{ij}]$ is a lower-triangular matrix with
$$
p_{ij} := \left\{
  \begin{array}{ll}
    1, & i = j~;\\
    -\frac{\xi_{i-1}}{\xi_i} & j = i-1~;\\
    0, & \text{otherwise.}
  \end{array}
\right.
$$
Because $M=PDP^\top$ is positive definite, and $\lambda e_1e_1^\top$ is positive semidefinite, we have that $\nabla^2\vol(\invbreve{U}^*_p(\bm\xi))$ is positive definite.
\end{proof}
\begin{theorem}\label{conj:pn}
For $0\leq \ell < u$, $1<p\leq 2$, and $\bm\xi:=(\ell,\xi_1,\dots,\xi_{n-1},u)$ $(\ell<\xi_1<\dots<\xi_{n-1}<u)$,
$\vol(\invbreve{U}^*_p(\bm\xi))$ is strictly convex in $(\xi_1,\dots,\xi_{n-1})$.
\end{theorem}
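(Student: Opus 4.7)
The plan is to establish positive definiteness of $\nabla^2 \vol(\invbreve{U}^*_p(\bm\xi))$ at every admissible $\bm\xi$, not merely at stationary points as in Theorem \ref{thm:localmin}. I would start from the Hessian already computed in the proof of Theorem \ref{thm:localmin}, namely
\[
\nabla^2 \vol \;=\; M_0 \;+\; \mathrm{diag}\!\left(\tfrac{p}{\xi_i}\tfrac{\partial \vol}{\partial \xi_i}\right),
\]
where $M_0$ is the tridiagonal ``core'' matrix whose diagonal is $a_i = \tfrac{\xi_{i-1}}{\xi_i}b_{i-1} + \tfrac{\xi_{i+1}}{\xi_i}b_i$ and whose off-diagonal is $-b_i$. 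By the argument of Theorem \ref{thm:localmin} together with Lemma \ref{lem:positive}, $M_0 = \lambda e_1 e_1^\top + PDP^\top \succ 0$ for every admissible $\bm\xi$. The only obstacle to positive definiteness of the full Hessian is the diagonal perturbation, which vanishes precisely at stationary points.

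To handle the perturbation, I would substitute the closed form
\[
\tfrac{\partial \vol}{\partial \xi_i} \;=\; -\tfrac{(p-1)\xi_i^{p-2}}{6p}\bigl(R_i^2 - S_i^2\bigr),
\]
also derived in the proof of Theorem \ref{thm:localmin}, where $R_i$ and $|S_i|$ are the nonnegative quantities formed from the convexity-gaps $\phi(x,y):=y^p+(p-1)x^p-px^{p-1}y$ and the differences $\xi_{i\pm1}^{p-1}-\xi_i^{p-1}$. Factoring $R_i^2 - S_i^2 = (R_i+|S_i|)(R_i-|S_i|)$ and using the integral representation $\phi(x,y) = p(p-1)(x-y)^2\int_0^1 s\,[\,y+s(x-y)\,]^{p-2}\,ds$, I would aim to establish a pointwise inequality bounding $\xi_i^{p-2}|R_i^2-S_i^2|$ by a suitable linear combination of the neighbouring off-diagonal magnitudes $b_{i-1}$ and $b_i$. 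The hypothesis $1 < p \le 2$ enters critically through the sign of $p-2$, which controls the monotonicity of the integrand $s\mapsto[y+s(x-y)]^{p-2}$ and supplies the needed direction in the inequality. Once this is in place, the diagonal perturbation can be absorbed into a refined Cholesky-type factorization $\tilde\lambda\, e_1 e_1^\top + \tilde P\tilde D \tilde P^\top$ of the full Hessian with $\tilde D \succ 0$.

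The main obstacle is to derive this pointwise inequality in a form that cleanly marries with the factorization of $M_0$. I expect the condition $p \le 2$ to be sharp, since Theorem \ref{thm:unique_stationary} asserts only uniqueness of a stationary point (not global convexity) when $p>2$, and a monotonicity reversal at $p=2$ is to be expected. Should the direct decomposition route prove intractable, a natural fallback is to induct on $n$ via a Schur complement with respect to the leading $(n-2)\times(n-2)$ principal block: the inductive hypothesis supplies positive definiteness of that block, and the remaining $1\times 1$ Schur complement reduces to a single scalar inequality of the type above. Either path should yield $\nabla^2 \vol \succ 0$ throughout the interior, and hence strict convexity in $(\xi_1,\dots,\xi_{n-1})$.
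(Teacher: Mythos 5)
Your setup is the same as the paper's: you take the Hessian structure from the proof of Theorem \ref{thm:localmin}, note that the tridiagonal ``core'' $M_0=\lambda e_1e_1^\top+PDP^\top$ is positive definite for every admissible $\bm\xi$, and correctly identify the sole obstacle as the diagonal perturbation $\tfrac{p}{\xi_i}\tfrac{\partial\vol}{\partial\xi_i}$, which can be negative away from stationary points. The gap is that everything after that is deferred: the ``pointwise inequality bounding $\xi_i^{p-2}|R_i^2-S_i^2|$ by a suitable linear combination of $b_{i-1}$ and $b_i$'' is announced but never derived, and you yourself flag it as the main obstacle. That inequality is not a routine estimate — it is the entire content of the theorem. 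The paper's proof spends essentially all of its effort there: it splits each diagonal entry $a_i$ into a ``left'' part $d_i$ (pairing the $S_i^2$ term with $\tfrac{\xi_{i-1}}{\xi_i}b_{i-1}$) and a ``right'' part $a_i-d_i$ (pairing the $-R_i^2$ term with $\tfrac{\xi_{i+1}}{\xi_i}b_i$), proves $a_i>d_i\ge 0$ using Lemmas \ref{lem:positive} and \ref{lem:hx}(i) (this is exactly where $1<p\le 2$ enters), and then proves the coupling inequality $d_{i+1}(a_i-d_i)\ge b_i^2$ by reducing it to the single-variable inequality $W(t)\ge 0$ via Lemmas \ref{lem:hx}(i) and \ref{lem:deltax}(i); positive definiteness then follows from the chain-sequence criterion of \cite{andjelic2011sufficient}. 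Without an analogue of these scalar inequalities, your ``refined Cholesky'' step has nothing to run on.

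Two further cautions on the shape of your proposed bound. First, bounding the \emph{absolute value} $|R_i^2-S_i^2|$ and distributing it over $b_{i-1},b_i$ is a diagonal-dominance--style strategy, and the paper explicitly notes (with a numerical example at $p=1.5$) that the Hessian is \emph{not} diagonally dominant in general; the signed splitting above, which keeps $S_i^2$ with the left neighbor and $R_i^2$ with the right, is what makes the estimate tight enough. Second, your Schur-complement/induction fallback is structurally sound (for a tridiagonal matrix it is equivalent to tracking positivity of pivots, which is what the chain-sequence condition encodes), but the $1\times 1$ Schur complement you must control is again exactly the inequality $d_{i+1}(a_i-d_i)\ge b_i^2$, so it does not circumvent the missing step. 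As written, the proposal is a plausible plan in the same spirit as the paper, but it does not constitute a proof.
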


\begin{remark}\label{rem:notquasiconvex}
  When $p> 2$, for the single non-boundary linearization point case, we can demonstrate that $\vol(\invbreve{U}^*_p(\bm\xi))$ is quasiconvex in $\xi_1$ (Theorem \ref{thm:concave1}). However, for the multiple non-boundary linearization points case, $\vol(\invbreve{U}^*_p(\bm\xi))$ is no longer guaranteed to be quasiconvex (from computation).
  A necessary condition for the quasiconvexity of $\vol(\invbreve{U}^*_p(\bm\xi))$ is that for all $\bm\xi$ $(\ell<\xi_1<\dots<\xi_{n-1}<u)$, and $d\in\mathbb{R}^{n-1}$, we have
  $$
d^\top \nabla \vol(\invbreve{U}^*_p(\bm\xi))=0 \quad\Rightarrow\quad d^\top \nabla^2 \vol(\invbreve{U}^*_p(\bm\xi)) d\ge 0.
  $$
 (see \cite{boyd2004convex}).
  This is equivalent to: either $\nabla \vol(\invbreve{U}^*_p(\bm\xi))=0$ and $\nabla^2 \vol(\invbreve{U}^*_p(\bm\xi))$ positive semidefinite or $\nabla \vol(\invbreve{U}^*_p(\bm\xi))\ne 0$ and the matrix
  $$
  \begin{bmatrix}
    \nabla^2 \nabla \vol(\invbreve{U}^*_p(\bm\xi)) & \nabla \vol(\invbreve{U}^*_p(\bm\xi))\\
    \nabla \vol(\invbreve{U}^*_p(\bm\xi))^\top & 0
  \end{bmatrix}
  $$
  has exactly one negative eigenvalue. We can easily find examples where this matrix has more than one negative eigenvalue. For example, for $p=3$, $n=3$, $\xi_1=0.2$, $\xi_2=0.8$, the eigenvalues are approximately $-0.03950$, $-0.00086$, and $0.30807$.
\end{remark}

\begin{proof}(Theorem \ref{conj:pn})
Recall that $\nabla^2\vol(\invbreve{U}^*_p(\bm\xi))$ is an $(n-1)\times(n-1)$ symmetric tridiagonal matrix with off-diagonal elements $-b_1,\dots,-b_{n-2}$ and diagonal elements $a_1,\dots,a_{n-1}$ satisfying $a_i = \frac{p}{\xi_i}\frac{\partial \vol(\invbreve{U}^*_p(\bm\xi))}{\partial \xi_i}+ \frac{\xi_{i-1}}{\xi_i}b_{i-1}+\frac{\xi_{i+1}}{\xi_i}b_{i}$, where
$\frac{\partial \vol(\invbreve{U}^*_p(\bm\xi))}{\partial \xi_i}=$
\begin{align*}
  & -\frac{(p-1)\xi_i^{p-2}}{6p}\negthinspace\negthinspace\negthinspace\left(\negthinspace\negthinspace\negthinspace\left(\frac{\xi_i^{p}+(p-1)\xi_{i+1}^{p}-p\xi_i\xi_{i+1}^{p-1}}{\xi_{i+1}^{p-1}-\xi_i^{p-1}}\right)^2
  \negthinspace\negthinspace\negthinspace-\negthinspace\negthinspace
  \left(\frac{\xi_i^{p}+(p-1)\xi_{i-1}^{p}-p\xi_i\xi_{i-1}^{p-1}}{\xi_{i-1}^{p-1}-\xi_i^{p-1}}\right)^2\right)\negthinspace\negthinspace\negthinspace.\\
  &b_i=\frac{(p-1)^2}{3p}\frac{\xi_i^{p-2}\xi_{i+1}^{p-2}[(p-1)\xi_{i+1}^{p}+\xi_i^p-p\xi_{i+1}^{p-1}\xi_i)][\xi_{i+1}^{p}+(p-1)\xi_i^p-p\xi_{i+1}\xi_i^{p-1}]}{(\xi_{i+1}^{p-1}-\xi_{i}^{p-1})^3}.
\end{align*}
To show that $\nabla^2\vol(\invbreve{U}^*_p(\bm\xi))$ is positive definite, we will apply a result from \cite{andjelic2011sufficient} to prove that $a_i>0$ and $\left\{\frac{b_i^2}{a_ia_{i+1}}\right\}_{i=1}^{n-2}$ is a \emph{chain sequence}; that is, there exists a parameter sequence $\{c_i\}_{i=0}^{n-2}$ such that $\frac{b_i^2}{a_ia_{i+1}}=c_i(1-c_{i-1})$ with $0\le c_0<1$ and $0<c_i<1$ for $i\ge1$. Also, we use the fact that if $\{\alpha_i\}$ is a chain sequence, and $0<\beta_i\le \alpha_i$, then $\{\beta_i\}$ is also a chain sequence. Therefore, we only need to show that $a_{i}>0$ and find a parameter sequence $\{c_i\}$ such that $0\le c_0<1$, $0<c_i<1$ for $i\ge 1$, and $0<\frac{b_i^2}{a_ia_{i+1}}\le c_i(1-c_{i-1})$.
Let $c_i :=\frac{d_{i+1}}{a_{i+1}}$,
where
\begin{align*}
  d_i &:= \frac{(p-1)\xi_i^{p-2}}{6\xi_i}\left(\frac{\xi_i^{p}+(p-1)\xi_{i-1}^{p}-p\xi_i\xi_{i-1}^{p-1}}{\xi_{i-1}^{p-1}-\xi_i^{p-1}}\right)^2+\frac{\xi_{i-1}}{\xi_i}b_{i-1}.
\end{align*}
Thus $d_1\ge0$ and $d_i>0$ for $i\ge 2$. Also, letting $t_i:=\frac{\xi_{i}}{\xi_{i+1}}$, $t_0\in[0,1)$, $t_i\in(0,1)$ for $i\ge 1$, we have
\begin{align*}
  &~a_i-d_i = -\frac{(p-1)\xi_i^{p-2}}{6\xi_i}\left(\frac{\xi_i^{p}+(p-1)\xi_{i+1}^{p}-p\xi_i\xi_{i+1}^{p-1}}{\xi_{i+1}^{p-1}-\xi_i^{p-1}}\right)^2+\frac{\xi_{i+1}}{\xi_i}b_{i}\\
  =&~\frac{(p-1)\xi_i^{p-2}}{6\xi_i}\left(\frac{\xi_i^{p}+(p-1)\xi_{i+1}^{p}-p\xi_i\xi_{i+1}^{p-1}}{\xi_{i+1}^{p-1}-\xi_i^{p-1}}\right)\\
  &\times\left(-\frac{\xi_i^{p}+(p-1)\xi_{i+1}^{p}-p\xi_i\xi_{i+1}^{p-1}}{\xi_{i+1}^{p-1}-\xi_i^{p-1}}+\frac{2(p-1)\xi_{i+1}^{p-1}}{p(\xi_{i+1}^{p-1}-\xi_i^{p-1})}\frac{\xi_{i+1}^{p}+(p-1)\xi_{i}^{p}-p\xi_{i+1}\xi_{i}^{p-1}}{\xi_{i+1}^{p-1}-\xi_i^{p-1}}\right)\\
  =&~ \frac{(p-1)\xi_{i+1}^{p-1}t_i^{p-3}(t_i^p+(p-1)-pt_i)}{6p(1-t_i^{p-1})^3}\\
  &~~~~~~\times(p(t_i^{p-1}-1)(t_i^p+(p-1)-pt_i) +2(p-1)((p-1)t_i^p+1-pt_i^{p-1}))\\
  =&~ \frac{(p-1)\xi_{i+1}^{p-1}t_i^{p-3}(t_i^p+(p-1)-pt_i)}{6p(1-t_i^{p-1})^3}\\
  &~~~~~~\times(pt_i(t_i^{p-1}-1)^2 + (p-1)[(p-2)(t_i^p-1)-p(t_i^{p-1}-t_i)]).
\end{align*}
By Lemma \ref{lem:positive} and Lemma \ref{lem:hx}(i) (See Appendix), we have that $a_i-d_i>0$. Therefore, $a_i>d_i\ge0$, and we have constructed $\{c_i\}$ satisfying $0\le c_0<1$ and $0<c_i<1$ for $i\ge 1$. Notice that
\begin{align*}
  d_i &= \frac{(p-1)\xi_{i}^{p-1}((p-1)t_{i-1}^p+1-pt_{i-1}^{p-1})}{6p(1-t_{i-1}^{p-1})^3}\\
  &~~~~~~\times(p(1-t_{i-1}^{p-1})((p-1)t_{i-1}^p+1-pt_{i-1}^{p-1}) +2(p-1)t_{i-1}^{p-1}(t_{i-1}^p+(p-1)-pt_{i-1})),
\end{align*}
$$
  b_i^2 = \frac{(p-1)^4}{9p^2}\frac{\xi_{i+1}^{2(p-1)}t_i^{2(p-2)}((p-1)+t_i^p-p t_i))^2(1+(p-1)t_i^p-pt_i^{p-1})^2}{(1-t_{i}^{p-1})^6}.
$$
We have
\begin{align*}
  &~\frac{a_ia_{i+1}c_i(1-c_{i-1})}{b_i^2} = \frac{d_{i+1}\left(a_i-d_i\right)}{b_i^2}\\
  =&~\frac{1}{4(p-1)^2}\frac{1}{t_i^{p-1}((p-1)t_{i}^p+1-pt_{i}^{p-1})(t_i^p+(p-1)-pt_i)}\\
  &~~~~~~\times(p(1-t_i^{p-1})^2 - (p-1)t_i^{p-1}[(p-2)(t_i^p-1)-p(t_i^{p-1}-t_i)])\\
  &~~~~~~\times(pt_i(t_i^{p-1}-1)^2 + (p-1)[(p-2)(t_i^p-1)-p(t_i^{p-1}-t_i)])\\
  =&~1 +\frac{1}{4(p-1)^2}\frac{p(1-t_i^{p-1})^2}{t_i^{p-1}((p-1)t_{i}^p+1-pt_{i}^{p-1})(t_i^p+(p-1)-pt_i)}\\
  &~~~~~~\times(pt_i(1-t_i^{p-1})^2 - p(p-1)^2t_i^{p-1}(1-t_i)^2+\\
  &~~~~~~~(p-1)(1-t_i^p)[(p-2)(t_i^p-1)-p(t_i^{p-1}-t_i)])\\
  =:&1 +\frac{1}{4(p-1)^2}\frac{p(1-t_i^{p-1})^2}{t_i^{p-1}((p-1)t_{i}^p+1-pt_{i}^{p-1})(t_i^p+(p-1)-pt_i)}W(t_i).
\end{align*}
\begin{align*}
  W'(t) &= -2p(p-1)t^{p-1}[(p-2)(t^p-1)-p(t^{p-1}-t)]\\
  &~~~+p^2[(1-t^{p-1})^2-(p-1)^2t^{p-2}(1-t)^2].
\end{align*}
By Lemma \ref{lem:hx}(i) and Lemma \ref{lem:deltax}(i) (See Appendix), $W'(t)\le 0$ for $t\in(0,1)$. Thus $W(t)\ge W(1)=0$ for $t\in[0,1)$. Therefore, $c_i(1-c_{i-1})\ge \frac{b_i^2}{a_ia_{i+1}}$. We conclude that $\nabla^2\vol(\invbreve{U}^*_p(\bm\xi))$ is positive definite, and $\vol(\invbreve{U}^*_p(\bm\xi))$ is strictly convex.
\end{proof}

\begin{remark}
  Unlike the $p=2$ case (Theorem \ref{thm:p2n}), $\nabla^2\vol(\invbreve{U}^*_p(\bm\xi))$ is not guaranteed to be diagonally dominant. Examples can be easily constructed even for $n=2$; for example, $p=1.5$, $n=2$, $\xi = (0,0.2,0.8,1)$, $\nabla^2\vol(\invbreve{U}^*_p(\bm\xi))\approx\left[\begin{smallmatrix}
    0.1366 &-0.0621\\-0.0621  & 0.0587
  \end{smallmatrix}\right]$.
  This is why we brought in the relatively-sophisticated
  technique of using chain sequences.
\end{remark}
We immediately have the following very-useful result.
\begin{corollary}\label{thm:partial_uniquemin}
For $1<p\le 2$ and fixed $\ell,~u,~n$, $\vol(\invbreve{U}^*_p(\bm\xi))$ has a unique minimizer satisfying $\ell<\xi_1<\dots<\xi_{n-1}<u$.
\end{corollary}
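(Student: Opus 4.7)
The plan is to derive the claim from Theorem \ref{conj:pn} (for uniqueness) combined with a compactness-and-boundary-exclusion argument (for existence in the interior). By Theorem \ref{conj:pn}, $\vol(\invbreve{U}^*_p(\bm\xi))$ is strictly convex on the open convex set $D:=\{(\xi_1,\dots,\xi_{n-1})\in\mathbb{R}^{n-1} : \ell<\xi_1<\cdots<\xi_{n-1}<u\}$, so it has at most one minimizer on $D$; the only remaining task is to ensure that a minimizer actually exists in $D$.

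For existence, I would first extend the volume function continuously to the compact closure $\bar D$ using the formula of Theorem \ref{PLperspecvolformula}: as $\xi_{i+1}\to\xi_i$, the intersection abscissa $\tau_{i+1}=\tfrac{p-1}{p}(\xi_{i+1}^p-\xi_i^p)/(\xi_{i+1}^{p-1}-\xi_i^{p-1})$ has the well-defined limit $\xi_i$, so each triangle contribution extends continuously (and analogous limits handle $\xi_1\to\ell$ and $\xi_{n-1}\to u$). By Weierstrass, the extended function attains its minimum at some $\bm\xi^*\in\bar D$. To exclude $\partial D$, I would appeal to the geometric interpretation from the proof of Theorem \ref{PLperspecvolformula}: the volume equals $1/3$ of the planar area trapped between the chord from $(\ell,f(\ell))$ to $(u,f(u))$ and the piecewise-linear under-estimator $g$. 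Since $f(x)=x^p$ is strictly convex for $p>1$, on every open subinterval $(\xi_{i-1},\xi_i)$ we have $g<f$ strictly. Hence if $\bm\xi^*\in\partial D$, then (under the convention $\xi_0:=\ell$, $\xi_n:=u$) some consecutive pair coincides, leaving a genuine subinterval on which $g<f$ strictly; inserting a new linearization point in that subinterval strictly raises $g$ on a neighborhood of the insertion point, strictly decreasing the trapped area and hence the volume, contradicting minimality of $\bm\xi^*$. Thus $\bm\xi^*\in D$, and uniqueness from strict convexity then gives the claim.

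The main obstacle I expect is justifying the strict decrease of volume upon inserting a linearization point. The geometric picture makes it plausible (strict convexity of $x^p$ puts the inserted tangent strictly above the previous piecewise-linear graph at the insertion point), but a rigorous argument requires either a term-by-term accounting in the triangle-sum formula of Theorem \ref{PLperspecvolformula}, or rewriting the volume as $\tfrac13\int_\ell^u [\text{chord}(x)-g(x)]\,dx$ and verifying that $g$ is strictly increased on an open neighborhood of the new linearization point while left unchanged elsewhere. Either route reduces to a concrete one-dimensional computation and should not pose a real difficulty, but it is the only place where the $p>1$ hypothesis on the strict convexity of $f$ is explicitly needed beyond what Theorem \ref{conj:pn} already provides.
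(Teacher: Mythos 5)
Your proposal is correct and rests on the same pillar as the paper: the paper states this corollary as an immediate consequence of Theorem \ref{conj:pn}, with strict convexity on the open simplex $\ell<\xi_1<\cdots<\xi_{n-1}<u$ giving uniqueness, exactly as in your first paragraph. What you add beyond the paper is an explicit existence argument, which the paper leaves implicit: you extend $\vol(\invbreve{U}^*_p(\bm\xi))$ continuously to the compact closure (the limits $\tau_{i}\to\xi_{i-1}$ as $\xi_i\to\xi_{i-1}$ do hold, and the limiting value coincides with the volume of the merged configuration since the degenerate intersection point lands on the segment joining its two neighbours), and you exclude boundary minimizers by noting that inserting a tangent at a point strictly between two distinct linearization points replaces $g$ by $\max\{g,\text{new tangent}\}$, which by strict convexity of $x^p$ exceeds $g$ on an open neighbourhood while the chordal upper boundary is unchanged, so the area $\tfrac13\int_\ell^u[\mathrm{chord}(x)-g(x)]\,dx$ strictly decreases. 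Both of these checks are sound, so your write-up is a slightly more complete version of the paper's one-line derivation rather than a different route; the only caveat is that the ``concrete one-dimensional computation'' you defer is exactly the insertion argument just described, and it does go through.
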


Next we are going to establish that $\vol(\invbreve{U}^*_p(\bm\xi))$ also has a unique minimizer when $p>2$. As mentioned in Remark \ref{rem:notquasiconvex}, $\vol(\invbreve{U}^*_p(\bm\xi))$ is not guaranteed to be quasiconvex when $p>2$. But with some efforts, we are going to show that $\vol(\invbreve{U}^*_p(\bm\xi))$ has a unique stationary point. For $\ell<\xi_1<\dots<\xi_{n-1}<u$, it is easy to see that $\nabla \vol(\invbreve{U}^*_p(\bm\xi))=0$ is equivalent to $F(\bm\xi)=0$, where $F(\bm\xi) = [F_1(\bm\xi),F_2(\bm\xi),\dots,F_{n-1}(\bm\xi)]^\top$,
\begin{equation*}
F_i(\bm\xi) :=-\frac{\xi_i^p+(p-1)\xi_{i+1}^p-p\xi_i \xi_{i+1}^{p-1}}{\xi_{i+1}^{p-1}-\xi_i^{p-1}}+\frac{\xi_i^p+(p-1)\xi_{i-1}^p-p\xi_i  \xi_{i-1}^{p-1}}{\xi_i^{p-1}-\xi_{i-1}^{p-1}}.
\end{equation*}
\begin{lemma}\label{lem:Jacobian} Assume that  $\ell<\xi_1<\dots<\xi_{n-1}<u$. If either: (i) $1<p<2$ and $F(\bm\xi)\ge 0$, or (ii) $p>2$, then $[F'(\bm\xi)]^{-1}$ is nonnegative.
\end{lemma}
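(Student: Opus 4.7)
The plan is to show that $F'(\bm\xi)$ is a nonsingular M-matrix, from which $[F'(\bm\xi)]^{-1}\ge 0$ is automatic. The first task is to sign the entries of $F'(\bm\xi)$: since each $F_i$ depends only on $\xi_{i-1},\xi_i,\xi_{i+1}$, the Jacobian is tridiagonal, and direct differentiation of the two ratios defining $F_i$ gives
\[
\frac{\partial F_i}{\partial\xi_{i-1}}=-\frac{(p-1)\,\xi_{i-1}^{p-2}\bigl[\xi_{i-1}^p+(p-1)\xi_i^p-p\xi_{i-1}\xi_i^{p-1}\bigr]}{(\xi_i^{p-1}-\xi_{i-1}^{p-1})^2}
\]
and an analogous expression for $\partial F_i/\partial\xi_{i+1}$. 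The bracketed factors are nonnegative by the weighted AM--GM / Bernoulli inequality already embodied in Lemma~\ref{lem:positive}, so both off-diagonals of $F'(\bm\xi)$ are $\le 0$, with strict inequality on the positive orthant. Thus $F'(\bm\xi)$ is an irreducible Z-matrix, and the standard characterization of nonsingular M-matrices among irreducible Z-matrices (via Perron--Frobenius applied to the nonnegative off-diagonal part) reduces the proof to exhibiting a $v>0$ with $F'(\bm\xi)v\ge 0$ and $F'(\bm\xi)v\ne 0$.

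For Case~(i) ($1<p<2$ with $F(\bm\xi)\ge 0$), I would take $v:=(\xi_1,\ldots,\xi_{n-1})^\top$. Each $F_i$ is positively homogeneous of degree~$1$ in $(\xi_{i-1},\xi_i,\xi_{i+1})$, so Euler's identity yields $(F'(\bm\xi)\,v)_i=F_i(\bm\xi)$ for every interior row $i=2,\ldots,n-2$, while the boundary rows pick up corrections
\[
(F'(\bm\xi)\,v)_1=F_1-(\partial F_1/\partial\xi_0)\,\ell,\qquad (F'(\bm\xi)\,v)_{n-1}=F_{n-1}-(\partial F_{n-1}/\partial\xi_n)\,u,
\]
because $\xi_0=\ell$ and $\xi_n=u$ are fixed. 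Since the two ``missing'' partials are $\le 0$ by the first step and $\ell,u\ge 0$, the two boundary components are $\ge F_1,F_{n-1}$ respectively, and in fact the last entry is strictly positive since $u>0$ and $\partial F_{n-1}/\partial\xi_n<0$. Combined with $F(\bm\xi)\ge 0$, this gives $F'(\bm\xi)v\ge 0$ and $F'(\bm\xi)v\ne 0$, which is enough.

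For Case~(ii) ($p>2$), the sign assumption on $F$ is unavailable, so I would instead take $v:=\mathbf{1}$ and establish strict positivity of every row sum $\partial F_i/\partial\xi_{i-1}+\partial F_i/\partial\xi_i+\partial F_i/\partial\xi_{i+1}$. Introducing the scale-invariant ratios $r:=\xi_{i-1}/\xi_i$ and $r':=\xi_{i+1}/\xi_i$, and using the identity $\psi(s)+s^{p-2}\varphi(s)=(1-s^{p-1})^2+(p-1)s^{p-2}(1-s)^2$ (with $\psi(s):=1+(p-1)s^p-ps^{p-1}$ and $\varphi(s):=s^p+(p-1)-ps$), the row sum collapses to
\[
2-(p-1)^2\!\left[\frac{r^{p-2}(1-r)^2}{(1-r^{p-1})^2}+\frac{(r')^{p-2}(r'-1)^2}{((r')^{p-1}-1)^2}\right],
\]
which is strictly positive once one proves the single-variable inequality $(p-1)\,s^{(p-2)/2}\,|1-s|<|1-s^{p-1}|$ for every $s>0,\,s\ne 1$, and every $p>2$. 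My intended proof is the substitution $s=e^{-x}$ with $x\ne 0$, which recasts this as $2\alpha\sinh(x/2)<\sinh(\alpha x)$ for $\alpha:=(p-1)/2>1/2$; this follows at once from the strict monotonicity of $u\mapsto\sinh(u)/u$ on $(0,\infty)$ applied at $u=x/2$ and $u=\alpha x$. The two boundary rows are handled by observing that dropping a nonpositive off-diagonal contribution can only increase the corresponding partial row sum, so strict positivity propagates from the interior bound.

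The main obstacle is the sharp inequality in Case~(ii): both sides vanish to second order as $s\to 1$, so no loose or slack-based estimate can work, and the hyperbolic-sine (or equivalent integral) representation is essentially forced. A minor but essential point is the irreducibility of $F'(\bm\xi)$, guaranteed by strict negativity of the off-diagonal entries in the first step, which is exactly what upgrades the weak inequality $F'(\bm\xi)v\ge 0$, $F'(\bm\xi)v\ne 0$ to the nonsingular-M-matrix property in both cases.
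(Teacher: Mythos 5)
Your proof is correct, and its skeleton is the same as the paper's: sign the off-diagonal entries via Lemma \ref{lem:positive} to get a (tridiagonal) $Z$-matrix, then certify the nonsingular $M$-matrix property by exhibiting a positive vector whose image under $F'(\bm\xi)$ is suitably signed. The differences are in the certificates. For case (i), the paper writes $F'(\bm\xi)\ge LU$ with $L,U$ bidiagonal having positive diagonals and invokes the Horn--Johnson comparison theorem (its diagonal identity \eqref{eqn:partialF1} is exactly the Euler identity you use), which yields a vector $\hat x>0$ with $F'(\bm\xi)\hat x>0$ strictly and so needs no irreducibility; you instead take $v=\bm\xi$ directly, get $F'(\bm\xi)v\ge 0$ with one strictly positive entry from the boundary row at $u$, and must therefore lean on strict negativity of the sub/superdiagonals plus the Perron--Frobenius subinvariance criterion for irreducible $Z$-matrices --- a valid and arguably leaner route, but note the irreducibility step is genuinely needed there (the weak certificate fails for reducible $Z$-matrices), and the product $\xi_0\,\partial F_1/\partial\xi_0$ at $\ell=0$ with $1<p<2$ should be read as the (vanishing) limit or checked directly, a cosmetic point the paper shares. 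For case (ii), your row-sum computation reproduces the paper's identity \eqref{eqn:partialF2} ($F'(\bm\xi)\mathbf{1}>0$ via $H>0$), and the only real novelty is your proof of the key inequality $(p-1)^2 s^{p-2}(1-s)^2<(1-s^{p-1})^2$, i.e.\ Lemma \ref{lem:deltax}(ii): the substitution $s=e^{-x}$ reducing it to $2\alpha\sinh(x/2)<\sinh(\alpha x)$ with $\alpha=(p-1)/2$ and monotonicity of $\sinh(t)/t$ is a clean alternative to the paper's convexity-of-$x^{p/2}$ argument (so your remark that this representation is ``essentially forced'' overstates matters, though it costs nothing). Net effect: your version trades the $LU$ comparison for homogeneity plus irreducibility, and trades one elementary convexity lemma for a hyperbolic-sine one; both are sound.
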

\begin{proof}
  $F'(\bm\xi)=\left[\frac{\partial F_i(\bm\xi)}{\partial \xi_j}\right]_{ij}\in\mathbb{R}^{(n-1)\times(n-1)}$, where
\begin{align}
  \frac{\partial F_i(\bm\xi)}{\partial \xi_i} &= \frac{1}{\xi_i}\left(F_i(\bm\xi) -\xi_{i-1}\frac{\partial F_i(\bm\xi)}{\partial \xi_{i-1}} - \xi_{i+1}\frac{\partial F_i(\bm\xi)}{\partial \xi_{i+1}}\right)\label{eqn:partialF1}\\
  &= H(\xi_{i-1},\xi_i) + H(\xi_i,\xi_{i+1})- \frac{\partial F_i(\bm\xi)}{\partial \xi_{i-1}} - \frac{\partial F_i(\bm\xi)}{\partial \xi_{i+1}},\label{eqn:partialF2}\\
  H(y,z) &=\frac{(y^{p-1}-z^{p-1})^2-(p-1)^2y^{p-2}z^{p-2}(y-z)^2}{(y^{p-1}-z^{p-1})^2},\notag\\
  \frac{\partial F_i(\bm\xi)}{\partial \xi_{i-1}}&= -\frac{(p-1)\xi_{i-1}^{p-2}[(p-1)\xi_{i}^p+\xi_{i-1}^p- p\xi_i^{p-1}\xi_{i-1}]}{(\xi_i^{p-1}-\xi_{i-1}^{p-1})^2},\notag\\
  \frac{\partial F_i(\bm\xi)}{\partial \xi_{i+1}}&= -\frac{(p-1)\xi_{i+1}^{p-2}[(p-1)\xi_{i}^p+\xi_{i+1}^p- p\xi_i^{p-1}\xi_{i+1}]}{(\xi_i^{p-1}-\xi_{i+1}^{p-1})^2}.\notag
\end{align}
  First, by Lemma \ref{lem:positive} (See Appendix), we have that all off-diagonal elements of $F'(\bm\xi)$ are nonpositive; thus $F'(\bm\xi)$ is a $Z$-matrix\footnote{A square matrix $A=[a_{ij}]$ (not necessary symmetric) is called a \emph{$Z$-matrix} if all of its off-diagonal entries are nonpositive.}
  $[F'(\bm\xi)]^{-1}\ge 0$ is one of the equivalent conditions that $F'(\bm\xi)$ is an $M$-matrix\footnote{A $Z$-matrix $A$ is  an \emph{$M$-matrix} if it is
  \emph{positive stable}, that is, all of its eigenvalues have positive real parts.
   In fact, the following conditions are equivalent for a $Z$-matrix to be an $M$-matrix: (1) All real eigenvalues of $A$ are positive; (2) $A$ is nonsingular and $A^{-1}$ is nonnegative; (3) $A=LU$ where $L$ is lower triangular and $U$ is upper triangular and all of the diagonal elements of $L,U$ are positive; (4) There exists a vector $x>0$ such that $Ax>0$; see \cite[Theorem 2.5.3]{horn1994topics}.}

  (i) If $1<p<2$ and $F_i(\bm\xi)\ge 0$, then from \eqref{eqn:partialF1} and $\frac{\partial F_1(\bm\xi)}{\partial \xi_0}\le 0$, we have
  $$F'(\bm\xi) = \mathrm{diag}\left(\frac{F_1(\bm\xi)}{\xi_1}, \frac{F_2(\bm\xi)}{\xi_2}, \dots, \frac{F_{n-1}(\bm\xi)}{\xi_{n-1}}\right) - \frac{\xi_0}{\xi_1}\frac{\partial F_1(\bm\xi)}{\partial \xi_0}e_1e_1^\top + LU\ge LU,$$
  where
  \begin{align*}
    L &:= \begin{bmatrix}
      -\frac{\xi_2}{\xi_1}\frac{\partial F_1(\bm\xi)}{\partial \xi_2} & 0 & 0 & \dots & 0\\
      \frac{\partial F_2(\bm\xi)}{\partial \xi_1} & -\frac{\xi_3}{\xi_2}\frac{\partial F_2(\bm\xi)}{\partial \xi_3} & 0 & \dots & 0\\
      0 & \frac{\partial F_3(\bm\xi)}{\partial \xi_2} & -\frac{\xi_4}{\xi_3}\frac{\partial F_3(\bm\xi)}{\partial \xi_4} & \dots & 0\\
      \vdots & \dots & \dots & \ddots & \vdots\\
      0 & \dots & \dots & \frac{\partial F_{n-1}(\bm\xi)}{\partial \xi_{n-2}} & \quad\quad-\frac{\xi_n}{\xi_{n-1}}\frac{\partial F_{n-1}(\bm\xi)}{\partial \xi_n}
    \end{bmatrix},\\
    U &:= \begin{bmatrix}
      1 & -\frac{\xi_1}{\xi_2} & 0 & \dots & 0 \\
      0 & 1 & -\frac{\xi_2}{\xi_3} & \dots & 0 \\
      \vdots & \dots & \vdots & \dots & \vdots\\
      \vdots & \dots & 0 & 1 & -\frac{\xi_{n-2}}{\xi_{n-1}}\\
      0 & \dots & \dots & 0 & 1
    \end{bmatrix}.
  \end{align*}
  All the diagonal elements of $L,U$ are positive, which implies that $LU$ is an $M$-matrix. Thus $F'(\bm\xi)\ge LU$ is also an $M$-matrix\footnote{The result follows from:  if $\hat{x}>0$ and $LU\hat{x}>0$, then $F'(\bm\xi)\hat{x}\ge LU\hat{x}>0$. (See \cite[Theorem 2.5.4]{horn1994topics}.)}.

  (ii) If $p>2$, then by Lemma \ref{lem:deltax}(ii) (See Appendix), $H(y,z)=H(y/z,1)>0$ for any $y\ne z$. Therefore, from \eqref{eqn:partialF2} we have that $F'(\bm\xi)\mathbf{1}>0$ where $\mathbf{1}$ is an all-1 vector, which implies that $F'(\bm\xi)$ is an $M$-matrix.
\end{proof}
\begin{lemma}\label{lem:convexity} Assume that  $\ell<\xi_1<\dots<\xi_{n-1}<u$. (i) If $1<p<2$, then $F_i(\bm\xi)$ is convex; (ii) If $p>2$, then $F_i(\bm\xi)$ is concave.
\end{lemma}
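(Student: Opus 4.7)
The plan is to reduce the three-variable convexity question for $F_i$ to a one-variable sign analysis. First, by a direct algebraic manipulation (factoring each of the two rational terms in the definition of $F_i$), one checks that
\[
F_i(\bm\xi) \;=\; p\bigl(2\xi_i \;-\; \tau(\xi_{i-1}, \xi_i) \;-\; \tau(\xi_i, \xi_{i+1})\bigr),
\]
where
\[
\tau(s,t) \;:=\; \frac{p-1}{p}\cdot \frac{t^p - s^p}{t^{p-1} - s^{p-1}}
\]
is exactly the $x$-coordinate of the intersection of the tangent lines to $x^p$ at $s$ and $t$ (the same $\tau_k$ appearing in the proof of Theorem~\ref{thm:localmin}), and is symmetric in $(s,t)$. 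Since $2p\xi_i$ is linear in $\bm\xi$, the lemma reduces to showing that $\tau(s,t)$ is concave on $\{s,t>0\}$ when $1<p<2$ and convex when $p>2$.

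Next I would exploit the positive $1$-homogeneity of $\tau$: writing $\tau(s,t) = s\,g(t/s)$ for $s>0$ with $g(r) := \tau(1,r) = \frac{p-1}{p}\cdot (r^p-1)/(r^{p-1}-1)$ (extended by $g(1)=1$), a short calculation yields that the $(s,t)$-Hessian of $\tau$ equals $(g''(t/s)/s)\,vv^{\top}$, where $v = (-t/s,\,1)^{\top}$. This is a rank-one matrix, positive semidefinite as a quadratic form, so its sign is controlled by the sign of $g''(t/s)/s$; since $s>0$, convexity (resp.\ concavity) of $\tau$ on $\{s,t>0\}$ is equivalent to convexity (resp.\ concavity) of $g$ on $(0,\infty)$. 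The boundary case $\xi_{i-1}=\ell=0$ (possible only at $i=1$) gives $\tau(0,\xi_1) = \frac{p-1}{p}\xi_1$, linear and therefore harmless.

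The remaining task is to show that $\mathrm{sign}(g''(r)) = \mathrm{sign}(p-2)$ on $(0,\infty)$. Direct computation gives
\[
g'(r) \;=\; \frac{p-1}{p}\cdot \frac{r^{p-2}\bigl(r^p - pr + p - 1\bigr)}{(r^{p-1}-1)^2} \;>\; 0,
\]
and logarithmic differentiation, followed by clearing denominators, reduces the sign of $g''(r)$ to the sign of
\[
Q(r) \;:=\; (p-2)(r^p - 1) \;-\; pr(r^{p-2} - 1),
\]
up to a multiplicative factor that carries the sign of $r^{p-1}-1$ (positive for $r>1$, negative for $r<1$). One checks $Q(1)=0$, $Q'(1)=0$, and
\[
Q''(r) \;=\; p(p-1)(p-2)\,r^{p-3}(r-1),
\]
so $\mathrm{sign}(Q''(r)) = \mathrm{sign}(p-2)\cdot \mathrm{sign}(r-1)$. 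Integrating twice from $r=1$ outward on each side gives $\mathrm{sign}(Q(r)) = \mathrm{sign}(p-2)\cdot \mathrm{sign}(r-1)$, which combined with the sign of the multiplier $r^{p-1}-1$ yields $\mathrm{sign}(g''(r)) = \mathrm{sign}(p-2)$ on $(0,\infty)\setminus\{1\}$; extending by continuity (a Taylor expansion gives $g''(1) = (p-2)/6$) completes the sign analysis of $g''$. Chaining back through the reductions, $F_i$ is convex for $1<p<2$ and concave for $p>2$.

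The main obstacle I foresee is the careful sign-bookkeeping across $r=1$ in the reduction from $g''$ to $Q$: both the clearing-denominator multiplier and $Q$ itself change sign at $r=1$, and only when the two flips are combined correctly does one recover a uniform sign for $g''$. An alternative (perhaps more conceptual) route is to exploit the integral representation $g(r)=\int_1^r u^{p-1}\,du\,/\,\int_1^r u^{p-2}\,du$, which exhibits $g$ as a weighted mean and could allow a direct attack on its convexity/concavity, but the route via $Q$ is elementary and self-contained once the signs are organized properly.
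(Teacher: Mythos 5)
Your proposal is correct, and it reaches the conclusion by a different structural route than the paper, although the two arguments ultimately hinge on the same one-variable inequality. The paper works directly with the $3\times 3$ Hessian of $F_i$ in $(\xi_{i-1},\xi_i,\xi_{i+1})$: it records the identities relating the pure second partials to the mixed ones (e.g.\ $\partial^2 F_i/\partial\xi_{i-1}^2=-(\xi_i/\xi_{i-1})\,\partial^2 F_i/\partial\xi_i\partial\xi_{i-1}$), factors $\nabla^2F_i$ as a congruence of a diagonal matrix by a unit lower-triangular matrix, and reads off semidefiniteness from the signs of the two mixed partials, which are settled by Lemma~\ref{lem:hx}. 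You instead observe $F_i=p\bigl(2\xi_i-\tau(\xi_{i-1},\xi_i)-\tau(\xi_i,\xi_{i+1})\bigr)$ with $\tau$ the tangent-intersection abscissa (this identity checks out), exploit that $\tau$ is positively $1$-homogeneous so each of its Hessians is the rank-one matrix $(g''(t/s)/s)\,vv^\top$, and reduce everything to the sign of $g''$; your polynomial $Q(r)=(p-2)(r^p-1)-pr(r^{p-2}-1)$ is exactly the function $h$ of the paper's Lemma~\ref{lem:hx}, and your sign analysis of $Q$ (via $Q(1)=Q'(1)=0$ and $Q''(r)=p(p-1)(p-2)r^{p-3}(r-1)$) reproduces the paper's proof of that lemma. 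Your formula for $g'$, the claimed structure $g''(r)=\frac{(p-1)^2}{p}\,r^{p-3}Q(r)/(r^{p-1}-1)^3$ underlying your sign bookkeeping, and the value $g''(1)=(p-2)/6$ all verify. What your route buys is a conceptual explanation: each intersection abscissa $\tau$ is itself jointly concave (resp.\ convex) for $1<p<2$ (resp.\ $p>2$), and homogeneity makes the rank-one Hessian structure (hence the paper's congruence identities) transparent; what the paper's route buys is explicit second-partial formulas in the same style as those used in Theorem~\ref{thm:localmin} and Lemma~\ref{lem:Jacobian}. You also correctly dispose of the boundary case $\ell=0$, where $\tau(0,\xi_1)=\frac{p-1}{p}\xi_1$ is linear, and since the relevant domain has $\xi_{i-1}<\xi_i<\xi_{i+1}$ strictly, the continuity extension across $r=1$ is not even needed.
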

\begin{proof}  We have
  \begin{align*}
    \frac{\partial^2 F_i(\bm\xi)}{\partial \xi_i^2} &=-\frac{\xi_{i-1}}{\xi_i}\frac{\partial^2 F_i(\bm\xi)}{\partial \xi_i\partial\xi_{i-1}} -\frac{\xi_{i+1}}{\xi_i}\frac{\partial^2 F_i(\bm\xi)}{\partial \xi_i\partial\xi_{i+1}},\\
    \frac{\partial^2 F_i(\bm\xi)}{\partial \xi_{i-1}^2} &=-\frac{\xi_{i}}{\xi_{i-1}}\frac{\partial^2 F_i(\bm\xi)}{\partial \xi_i\partial\xi_{i-1}},\\
    \frac{\partial^2 F_i(\bm\xi)}{\partial \xi_{i+1}^2} &= -\frac{\xi_{i}}{\xi_{i+1}}\frac{\partial^2 F_i(\bm\xi)}{\partial \xi_i\partial\xi_{i+1}},
  \end{align*}
  where
  \begin{align*}
    \frac{\partial^2 F_i(\bm\xi)}{\partial \xi_i\partial\xi_{i-1}}&= -\frac{(p-1)^2\xi_{i-1}^{p-2}\xi_i^{p-2}[(2-p)(\xi_i^p-\xi_{i-1}^p) - p(\xi_i\xi_{i-1}^{p-1} -\xi_i^{p-1}\xi_{i-1})]}{(\xi_i^{p-1}-\xi_{i-1}^{p-1})^3},\\
    \frac{\partial^2 F_i(\bm\xi)}{\partial \xi_i\partial\xi_{i+1}}&= -\frac{(p-1)^2\xi_{i+1}^{p-2}\xi_i^{p-2}[(2-p)(\xi_i^p-\xi_{i+1}^p) - p(\xi_i\xi_{i+1}^{p-1} -\xi_i^{p-1}\xi_{i+1})]}{(\xi_i^{p-1}-\xi_{i+1}^{p-1})^3}.
  \end{align*}
  Notice that
  $$
  \nabla^2 F_i(x) =\begin{bmatrix}
  1 & 0 & 0\\
  -\frac{\xi_{i-1}}{\xi_i} & 1 & 0\\
  0 & -\frac{\xi_i}{\xi_{i+1}} & 1
  \end{bmatrix}
  \begin{bmatrix}
    -\frac{\xi_{i}}{\xi_{i-1}}\frac{\partial^2 F_i(\bm\xi)}{\partial \xi_i\partial\xi_{i-1}} & 0 & 0\\
    0 & -\frac{\xi_{i+1}}{\xi_{i}}\frac{\partial^2 F_i(\bm\xi)}{\partial \xi_i\partial\xi_{i+1}} & 0\\
    0 & 0 & 0
  \end{bmatrix}
  \begin{bmatrix}
  1 & -\frac{\xi_{i-1}}{\xi_i} & 0 \\
  0 & 1 & -\frac{\xi_i}{\xi_{i+1}}\\
  0 & 0 & 1
  \end{bmatrix}.
  $$
  By Lemma \ref{lem:hx} (See Appendix), we have that $\frac{\partial^2 F_i(x)}{\partial \xi_i\partial\xi_{i-1}}<0,~\frac{\partial^2 F_i(x)}{\partial \xi_i\partial\xi_{i+1}}<0$ ($>0$) when $1<p<2$ ($p>2$).
  Therefore, $\nabla^2 F_i(x)$ (-$\nabla^2 F_i(x)$) is positive semidefinite if $1<p<2$ ($p>2$), which implies that $F_i(x)$ is convex (concave) when $1<p<2$ ($p>2$).
\end{proof}
\begin{theorem}\label{thm:unique_stationary}
  If $p>2$, there exists a unique $\bm\xi^*$ ($\ell<\xi^*_1<\dots<\xi^*_{n-1}<u$) such that $F(\bm\xi^*)=0$.
\end{theorem}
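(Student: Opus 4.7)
My plan is to prove existence and uniqueness separately, leaning heavily on the two preceding lemmas for uniqueness.

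\textbf{Existence.} Observe first that $V(\bm\xi):=\vol(\invbreve{U}^*_p(\bm\xi))$ extends continuously to the compact simplex $\bar\Delta:=\{\bm\xi:\ell\le \xi_1\le\cdots\le\xi_{n-1}\le u\}$, because in the closed form derived in the proof of Theorem~\ref{thm:localmin} each denominator $\xi_i^{p-1}-\xi_{i-1}^{p-1}$ is paired with a numerator containing $(\xi_i-\xi_{i-1})^2$, which vanishes faster. Hence $V$ attains its minimum on $\bar\Delta$. To show the minimum lies in the open simplex, I use the pyramid interpretation from Theorem~\ref{PLperspecvolformula}: the base of $\invbreve{U}^*_p(\bm\xi)$ at $z=1$ is the planar region between the secant line of $f$ and the piecewise-linear under-estimator $g$, so $V=\tfrac{1}{3}\int_\ell^u(\mathrm{secant}(x)-g(x))\,dx$. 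At any boundary point of $\bar\Delta$ some $\xi_i=\xi_{i-1}$, so $g$ has one fewer distinct tangent piece; because $f(x)=x^p$ is strictly convex, separating a coincident pair by a small perturbation introduces a new tangent line strictly above $g$ on an open sub-interval, which strictly reduces the integrand and hence $V$. Therefore the minimum lies in the open simplex, and the first-order condition $\nabla V=0$, equivalent to $F=0$, provides the desired $\bm\xi^*$.

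\textbf{Uniqueness.} Suppose $F(\bm\xi^a)=F(\bm\xi^b)=0$ with $\bm\xi^a\ne\bm\xi^b$, and set $v:=\bm\xi^b-\bm\xi^a$. The segment $\bm\xi^\lambda:=\bm\xi^a+\lambda v$ lies in the open simplex for every $\lambda\in[0,1]$ by convexity. By Lemma~\ref{lem:convexity}(ii), each $F_i$ is concave for $p>2$, so
\[
F(\bm\xi^\lambda)\;\ge\;(1-\lambda)\,F(\bm\xi^a)+\lambda\,F(\bm\xi^b)\;=\;0
\]
componentwise for all $\lambda\in[0,1]$. Since $F(\bm\xi^0)=0$ while $F(\bm\xi^\lambda)\ge 0$ for $\lambda>0$ small, the right-derivative satisfies $F'(\bm\xi^a)\,v\ge 0$; multiplying on the left by $[F'(\bm\xi^a)]^{-1}\ge 0$ (Lemma~\ref{lem:Jacobian}(ii)) yields $v\ge 0$. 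The mirror-image argument at $\lambda=1$, using that $F'(\bm\xi^b)\,v\le 0$ as the left-derivative there, yields $-v\ge 0$. Hence $v=0$, a contradiction.

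\textbf{Main obstacle.} The uniqueness half is essentially a clean consequence of combining concavity of $F$ (Lemma~\ref{lem:convexity}(ii)) with the M-matrix property $[F'(\bm\xi)]^{-1}\ge 0$ (Lemma~\ref{lem:Jacobian}(ii)). The real care is in showing that the minimum of $V$ is strictly interior, particularly in degenerate boundary configurations such as $\ell=0$ with $\xi_1\to 0^+$, where $f'(0)=0$ makes the geometric picture slightly delicate. The base-area argument above covers every such case uniformly, but an alternative route is a short Taylor expansion verifying $\lim_{\xi_i\to\xi_{i-1}^+}\partial V/\partial\xi_i<0$, so that moving $\xi_i$ into the interior is always a strict descent direction.
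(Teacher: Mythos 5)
Your uniqueness half is, at its core, the same argument as the paper's: both rest on Lemma \ref{lem:convexity}(ii) (concavity of each $F_i$ for $p>2$) combined with Lemma \ref{lem:Jacobian}(ii) (nonnegativity of $[F'(\bm\xi)]^{-1}$), and both conclude $\bm\xi^a-\bm\xi^b\ge 0$ and $\le 0$ by hitting a sign-definite quantity with the nonnegative inverse Jacobian. The paper gets there slightly more directly, using the first-order concavity inequality $F(\bm\xi^a)-F(\bm\xi^b)\le F'(\bm\xi^b)(\bm\xi^a-\bm\xi^b)$ at each of the two roots, whereas you pass through $F(\bm\xi^\lambda)\ge 0$ along the segment and one-sided derivatives at the endpoints; your detour is correct but unnecessary, since the gradient inequality gives $F'(\bm\xi^a)v\ge0$ and $F'(\bm\xi^b)v\le0$ in one line each. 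Where you genuinely add something is existence: the paper's proof of this theorem only establishes uniqueness and leaves existence of a stationary point implicit, while you supply it explicitly via continuity of $\vol(\invbreve{U}^*_p(\bm\xi))$ on the closed simplex, compactness, and a strict-improvement argument at boundary configurations (merging $\xi_i=\xi_{i-1}$, including $\xi_1=\ell$ and $\xi_{n-1}=u$): separating a coincident pair adds a new tangent line that, by strict convexity of $x^p$, strictly raises $g$ on an open subinterval and hence strictly lowers the volume, so the minimum is interior and the first-order condition $F=0$ (which the paper notes is equivalent to $\nabla\vol=0$ in the open simplex) yields $\bm\xi^*$. That argument is sound, covers the $\ell=0$, $\xi_1\to0^+$ case, and makes the theorem's existence claim self-contained in a way the paper's write-up does not.
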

\begin{proof}
 Suppose that $F(\bm\xi^1)=F(\bm\xi^2)=0$. By Lemma \ref{lem:Jacobian} (See Appendix), we have that $[F'(\bm\xi^1)]^{-1}$ and  $[F'(\bm\xi^2)]^{-1}$ are nonnegative. Also from Lemma \ref{lem:convexity}, we have that $F_i(\bm\xi)$ is concave, which implies that
 \begin{align*}
   &0 = F(\bm\xi^1) - F(\bm\xi^2) \le F'(\bm\xi^2) (\bm\xi^1-\bm\xi^2),\\
   &0 = F(\bm\xi^2) - F(\bm\xi^1) \le F'(\bm\xi^1) (\bm\xi^2-\bm\xi^1).
 \end{align*}
 Therefore,
 \begin{align*}
  &\bm\xi^1 - \bm\xi^2 = [F'(\bm\xi^2)]^{-1} (F'(\bm\xi^2) (\bm\xi^1-\bm\xi^2))\ge 0,\\
  &\bm\xi^2 - \bm\xi^1 = [F'(\bm\xi^1)]^{-1} (F'(\bm\xi^1) (\bm\xi^2-\bm\xi^1))\ge 0,
\end{align*}
which implies $\bm\xi^1=\bm\xi^2$.
\end{proof}
We immediately have the following very-useful result.
\begin{corollary}\label{thm:remaining_uniquemin}
For $p > 2$ and fixed $\ell,~u,~n$, $\vol(\invbreve{U}^*_p(\bm\xi))$ has a unique minimizer satisfying $\ell<\xi_1<\dots<\xi_{n-1}<u$.
\end{corollary}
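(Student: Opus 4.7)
The plan is to deduce this uniqueness corollary directly from Theorem~\ref{thm:unique_stationary} via a standard smoothness/compactness argument. On the open simplex
\[
\Delta := \{(\xi_1,\ldots,\xi_{n-1}) : \ell<\xi_1<\cdots<\xi_{n-1}<u\},
\]
the function $V(\bm\xi) := \vol(\invbreve{U}^*_p(\bm\xi))$ is smooth, so any interior global minimizer must be a stationary point, and hence---by Theorem~\ref{thm:unique_stationary}---must coincide with the unique $\bm\xi^*$ produced there. Moreover, by Theorem~\ref{thm:localmin}, this $\bm\xi^*$ is already known to be a strict local minimizer. It therefore suffices to show that a global minimizer of $V$ exists in $\Delta$ rather than only on its boundary.

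To that end, I would first extend $V$ continuously to the compact closure $\bar\Delta$ using the volume formula of Theorem~\ref{PLperspecvolformula}: the $\tau_i$'s have removable singularities as adjacent $\xi_i$'s merge (an L'Hopital computation yields $\tau_i \to \xi_{i-1}=\xi_i$), and the triangulation deforms continuously into that of the reduced configuration. Compactness then forces $V$ to attain its global minimum at some $\bm\xi^\circ \in \bar\Delta$. I would next rule out $\bm\xi^\circ$ lying on the boundary: every boundary configuration has two of the values $\ell = \xi_0,\xi_1,\ldots,\xi_{n-1},\xi_n = u$ coinciding, so the associated piecewise-linear under-estimator $g$ has strictly fewer than $n+1$ distinct breakpoints. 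Separating a coincident pair by moving one of them into the interior introduces a new tangent line that, by strict convexity of $x^p$ for $p>1$, lies strictly above the old $g$ on an open subinterval, strictly tightening the lower bound $y \geq z\, g(x/z)$ and hence strictly decreasing $V$. Thus $\bm\xi^\circ \in \Delta$, whence $\bm\xi^\circ = \bm\xi^*$ by Theorem~\ref{thm:unique_stationary}.

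The main technical point is the \emph{strict} decrease in the boundary-exclusion step, which must be verified carefully rather than merely asserted. The cleanest route is again via the triangulation of Theorem~\ref{PLperspecvolformula}: separating a duplicated linearization point creates a new intersection point $P_i$ strictly off the pre-existing tangent line, producing an extra triangle of strictly positive area in the base of the pyramid and hence strictly positive incremental volume. Everything else is a routine continuity/compactness argument, paralleling the role that convexity of $V$ (Theorem~\ref{conj:pn}) played in deducing Corollary~\ref{thm:partial_uniquemin} in the complementary $1<p\leq 2$ regime.
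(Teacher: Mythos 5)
Your proof is correct and follows the route the paper takes implicitly: the corollary is stated there as an immediate consequence of Theorem \ref{thm:unique_stationary} (any minimizer in the open simplex is a stationary point, and the stationary point is unique). Your added compactness/boundary-exclusion step --- continuous extension of the volume to the closed simplex and the strict decrease obtained by separating coincident linearization points, which forces the global minimizer off the boundary --- is sound and simply supplies the existence-of-an-interior-minimizer detail that the paper elides with ``immediately.''
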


It is interesting and potentially useful to understand the behavior of the
optimal locations of linearization points as a function of the power $p>1$.
\smallskip

\begin{theorem}\label{conj:increasing}
  For fixed $\ell$ and $u$, and $\ell<\xi_1<\dots<\xi_{n-1}<u$, suppose that $\bm\xi=(\ell,\xi_1,\dots,\allowbreak\xi_{n-1},u)$ minimizes $\vol(\invbreve{U}^*_p(\bm\xi))$. Then $\xi_i$ ($i=1,2,\dots,n-1$) is increasing in $p$ on $(1,\infty)$.
\end{theorem}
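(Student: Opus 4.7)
The plan is to apply the implicit function theorem to the stationary system $F(\bm\xi,p)=0$ at the (unique) minimizer $\bm\xi^*(p)$ (existence and uniqueness come from Corollaries \ref{thm:partial_uniquemin} and \ref{thm:remaining_uniquemin}) and verify that the resulting derivative $\frac{d}{dp}\bm\xi^*$ is entrywise positive. First, we recast $F_i$ in geometrically transparent form: the tangents to $x^p$ at $\xi_i$ and $\xi_{i+1}$ intersect at abscissa $\tau_{i+1}$ and have slope difference $p(\xi_{i+1}^{p-1}-\xi_i^{p-1})$, so a direct calculation shows
$$F_i(\bm\xi,p)=p\bigl(2\xi_i-\tau_i-\tau_{i+1}\bigr).$$
In particular, the stationary condition reduces to the appealing midpoint condition $\xi_i=\tfrac{1}{2}(\tau_i+\tau_{i+1})$; differentiating in $p$ with $\bm\xi$ frozen at $\bm\xi^*(p)$ then makes the prefactor $2\xi_i-\tau_i-\tau_{i+1}$ vanish, leaving
$$\left.\frac{\partial F_i}{\partial p}\right|_{\bm\xi=\bm\xi^*(p)} = -p\left(\frac{\partial \tau_i}{\partial p}+\frac{\partial \tau_{i+1}}{\partial p}\right).$$

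The key analytic step is to show that, for each fixed $0\le x<y$, $\tau(x,y;p):=\tfrac{(p-1)(y^p-x^p)}{p(y^{p-1}-x^{p-1})}$ is strictly increasing in $p$. Using the identities $y^p-x^p=p\int_x^y q^{p-1}\,dq$ and $y^{p-1}-x^{p-1}=(p-1)\int_x^y q^{p-2}\,dq$ (both valid for $p>1$), we obtain the clean probabilistic representation
$$\tau(x,y;p)=\frac{\int_x^y q\cdot q^{p-2}\,dq}{\int_x^y q^{p-2}\,dq}=\mathbb{E}_{p-2}[Q],$$
where $Q$ has density proportional to $q^{p-2}$ on $[x,y]$. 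Differentiating under the integral sign yields $\partial\tau/\partial p=\mathrm{Cov}_{p-2}(Q,\log Q)$, and this covariance is strictly positive because $q\mapsto q$ and $q\mapsto\log q$ are both strictly increasing on $(0,\infty)$ and the distribution is nondegenerate when $x<y$ (Chebyshev's sum/FKG covariance inequality). Hence $\partial F_i/\partial p<0$ componentwise at $\bm\xi^*(p)$.

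Finally, $F'(\bm\xi^*(p))$ is a tridiagonal $M$-matrix: for $p\neq 2$ this is Lemma \ref{lem:Jacobian} (case (i) applies when $1<p<2$ because $F(\bm\xi^*)=0\ge 0$; case (ii) covers $p>2$); and for $p=2$ the Jacobian is explicitly the tridiagonal matrix with diagonal $1$ and off-diagonals $-\tfrac{1}{2}$, which is manifestly an $M$-matrix. Because the off-diagonals $\partial F_i/\partial \xi_{i\pm 1}$ are strictly negative, the underlying $Z$-matrix is irreducible, so $\bigl[F'(\bm\xi^*(p))\bigr]^{-1}$ is entrywise strictly positive. The implicit function theorem then delivers
$$\frac{d\bm\xi^*}{dp}=-\bigl[F'(\bm\xi^*(p))\bigr]^{-1}\frac{\partial F}{\partial p}\bigl(\bm\xi^*(p),p\bigr),$$
the product of an entrywise strictly positive matrix with an entrywise strictly negative vector, which is entrywise strictly positive, giving $d\xi_i^*/dp>0$ for every $i=1,\dots,n-1$, as claimed. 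The main obstacle is establishing the monotonicity of $\tau$ in $p$, which at first glance looks like a thorny algebraic inequality in $(x,y,p)$ but becomes transparent once rewritten as a covariance of two comonotone functions.
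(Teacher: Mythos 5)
Your proposal is correct, and its skeleton coincides with the paper's proof: both apply the implicit function theorem to the stationary system $F(\bm\xi,p)=0$ at the unique minimizer, both use Lemma \ref{lem:Jacobian} to get entrywise nonnegativity of $[F'(\bm\xi)]^{-1}$, and both reduce the claim to showing $\partial F_i/\partial p<0$ at the stationary point. Where you genuinely depart from the paper is in how that sign is established. The paper differentiates $F_i$ in $p$ directly and verifies positivity of the resulting algebraic expression via Lemma \ref{lem:partialp}, i.e.\ the calculus inequality $p(p-1)(1-x)x^{p-1}\log x+(x^{p-1}-1)(x^p-1)>0$, proved by a two-stage monotonicity argument. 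You instead isolate the structural identity $F_i=p(2\xi_i-\tau_i-\tau_{i+1})$ (which is correct, and which the paper never states explicitly) so that at a stationary point only $-p(\partial_p\tau_i+\partial_p\tau_{i+1})$ survives, and then prove $\partial\tau/\partial p>0$ by writing $\tau(x,y;p)=\mathbb{E}_{p-2}[Q]$ for $Q$ with density proportional to $q^{p-2}$ on $[x,y]$ and identifying $\partial\tau/\partial p=\mathrm{Cov}(Q,\log Q)>0$ by Chebyshev's covariance inequality. This is equivalent in content to Lemma \ref{lem:partialp} but conceptually cleaner and essentially computation-free; it also explains \emph{why} the inequality holds. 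Two small points to tidy: when $\ell=0$ and $i=1$ you should note that $q^{p-2}$ and $q^{p-2}\log q$ are integrable near $0$ for $p>1$ (or just compute $\tau(0,y;p)=\frac{p-1}{p}y$ directly), and at $p=2$ the Jacobian of $F$ is the tridiagonal matrix with diagonal $2$ and off-diagonals $-1$ (you wrote $1$ and $-\tfrac12$); neither affects the argument, and handling $p=2$ separately is actually slightly more careful than the paper, whose Lemma \ref{lem:Jacobian} is stated only for $p\neq 2$. Your irreducibility remark is unnecessary but harmless: nonnegativity plus nonsingularity of $[F'(\bm\xi)]^{-1}$ already forces the product with a strictly negative vector to be strictly positive entrywise.
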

\begin{proof}
  By Corollary \ref{thm:partial_uniquemin} and \ref{thm:remaining_uniquemin}, we have that $\bm\xi$ is unique and satisfies $\nabla\vol(\invbreve{U}^*_p(\bm\xi))=0$
  , i.e., $F(\bm\xi)=0$, where
  \begin{equation*}
  F_i(\bm\xi):=-\frac{\xi_i^p+(p-1)\xi_{i+1}^p-p\xi_i \xi_{i+1}^{p-1}}{\xi_{i+1}^{p-1}-\xi_i^{p-1}}+\frac{\xi_i^p+(p-1)\xi_{i-1}^p-p\xi_i  \xi_{i-1}^{p-1}}{\xi_i^{p-1}-\xi_{i-1}^{p-1}} = 0.
  \end{equation*}
  Recall from Lemma \ref{lem:Jacobian} that when $F(\bm\xi)=0$, $[F'(\bm\xi)]^{-1}$ is nonnegative for $p>1$. Let $F_i(p,\bm\xi) := F_i(\bm\xi)$ to emphasize the dependence $p$. By the implicit function theorem, there exists a small neighborhood around $(p,\bm\xi)$ and a function $\bm\Xi(p)$ such that $\bm\Xi(p)=\bm\Xi$, $F(p,\bm\Xi(p)))=0$, and
  $$
  \frac{\partial \bm\Xi(p)}{\partial p} = -\left[\frac{\partial F_i(p,\bm\Xi(p))}{\partial \xi_j}\right]^{-1} \frac{\partial F(p,\bm\Xi(p))}{\partial p}.
  $$
  We claim that $\frac{\partial F(p,\bm\xi)}{\partial p}$ is negative when $F(p,\bm\xi)=0$. Because $[F'(\bm\xi)]^{-1}$ is nonnegative, it follows that $\frac{\partial \bm\Xi(p)}{\partial p}> 0$.

  We only need to prove the above claim.
  \begin{align*}
    \frac{\partial F(p,\bm\xi)}{\partial p} &= \frac{F_i(p,\bm\xi)}{p}\\
    &~~~~-\frac{p(p-1)\xi_{i+1}^{p-1}\xi_i^{p-1}(\xi_{i+1}-\xi_{i})\log \frac{\xi_i}{\xi_{i+1}} +(\xi_{i+1}^p-\xi_i^p)(\xi_{i+1}^{p-1}-\xi_i^{p-1})}{p(\xi_{i+1}^{p-1}-\xi_i^{p-1})^2}\\
    &~~~~-\frac{p(p-1)\xi_{i-1}^{p-1}\xi_i^{p-1}(\xi_{i-1}-\xi_{i})\log \frac{\xi_i}{\xi_{i-1}} +(\xi_{i-1}^p-\xi_i^p)(\xi_{i-1}^{p-1}-\xi_i^{p-1})}{p(\xi_{i-1}^{p-1}-\xi_i^{p-1})^2}.
  \end{align*}
  Then using Lemma \ref{lem:partialp} (See Appendix) and $F_i(p,\bm\xi)=0$, we have
  \begin{align*}
    \frac{\partial F(p,\bm\xi)}{\partial p} < \frac{F_i(p,\bm\xi)}{p} = 0.
  \end{align*}
  \end{proof}

Starting from equally-spaced points, we can numerically compute the minimizer $\bm\xi$ by solving the nonlinear optimality equation $F(\bm\xi) = 0$ via Newton's method (see, e.g. \cite{ortega2000iterative}).  Illustrating Theorem \ref{conj:increasing},
Figure \ref{fig:ximin_n} shows the computed
  $\bm\xi$ for varying $p$, with $n=5$, $\ell=0$, $u=1$.

\begin{figure}[ht]
  \centering
  \includegraphics[scale = 0.4]{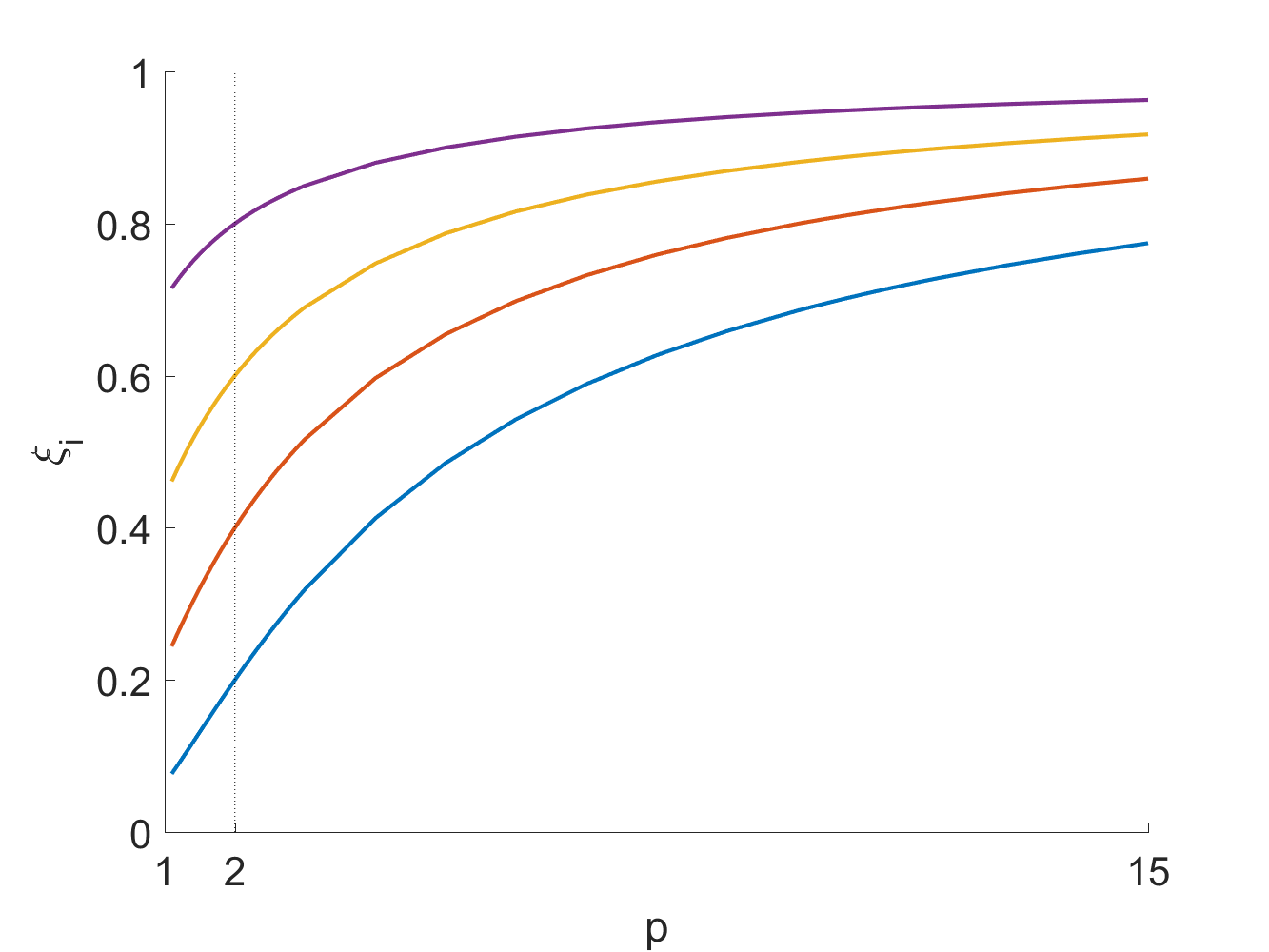}
  \caption{minimizing $\bm\xi$ for varying $p$ ($n=5$, $\ell=0$, $u=1$).}\label{fig:ximin_n}
\end{figure}
In fact, we can show that Newton's method behaves very nicely on this function.
\begin{proposition}\label{prop:initial_F}
  For the equally-spaced linearization points $\xi_i:=\ell+\frac{i}{n}(u-\ell)$, we have $F(\bm\xi)>0$ when $1<p<2$, and $F(\bm\xi)<0$ when $p>2$.
\end{proposition}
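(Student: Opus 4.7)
\emph{Strategy and reduction.} The plan is to exploit the homogeneity of $f(t):=t^p$ to reduce each three-point quantity $F_i(\bm\xi)$ to a one-parameter expression, then realize it via Taylor's integral remainder as a difference of two weighted means, and finally apply a Chebyshev--type monotone-ratio inequality to sign that difference. Introduce
\[
R(a,b):=\frac{a^p+(p-1)b^p-pab^{p-1}}{b^{p-1}-a^{p-1}}\quad(a,b>0,\ a\ne b),
\]
so that a direct rearrangement yields $F_i(\bm\xi)=-R(\xi_i,\xi_{i+1})-R(\xi_i,\xi_{i-1})$. Since $R$ is homogeneous of degree one, for the equally-spaced $\xi_i=\ell+ih$ (with $h:=(u-\ell)/n$), setting $s:=h/\xi_i\in(0,1]$ gives
\[
F_i(\bm\xi)=-\xi_i\bigl[R(1,1+s)+R(1,1-s)\bigr],
\]
so (since $\xi_i>0$) it suffices to sign $R(1,1+s)+R(1,1-s)$. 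Note that $s<1$ whenever $i\ge 2$ or $\ell>0$, while $s=1$ only in the boundary case $\ell=0$, $i=1$.

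\emph{Weighted-mean representation.} The numerator of $R(1,y)$ equals the tangent-line error $1^p-T_y(1)$ of $f$ at base point $y$, so Taylor's integral remainder gives $1^p-T_y(1)=\int_y^1(1-t)f''(t)\,dt$ along with $y^{p-1}-1=\tfrac1p\int_1^y f''(t)\,dt$. Substituting $v=t-1$ (for $y=1+s$) and $v=1-t$ (for $y=1-s$) and canceling the common factor $p(p-1)$ in $f''(t)=p(p-1)t^{p-2}$, one obtains
\[
R(1,1+s)+R(1,1-s)\;=\;p\bigl(\mu^+(s)-\mu^-(s)\bigr),\qquad\mu^\pm(s):=\frac{\int_0^s v(1\pm v)^{p-2}\,dv}{\int_0^s(1\pm v)^{p-2}\,dv}.
\]
That is, $\mu^\pm(s)$ are the means of $v$ on $[0,s]$ under the positive weights $w_\pm(v):=(1\pm v)^{p-2}$.

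\emph{Chebyshev comparison.} The weight ratio
\[
\frac{w_+(v)}{w_-(v)}=\left(\frac{1+v}{1-v}\right)^{p-2}
\]
is strictly increasing on $[0,1)$ when $p>2$ and strictly decreasing when $1<p<2$. The standard symmetrization
\[
(\mu^+-\mu^-)\!\int_0^s\!w_+\!\int_0^s\!w_-\;=\;\tfrac12\!\int_0^s\!\!\int_0^s\!(u-v)\!\left(\tfrac{w_+(u)}{w_-(u)}-\tfrac{w_+(v)}{w_-(v)}\right)\!w_-(u)w_-(v)\,du\,dv
\]
has integrand with the sign of $p-2$ on a set of full measure. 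Hence $\mu^+(s)>\mu^-(s)$ for $p>2$ and $\mu^+(s)<\mu^-(s)$ for $1<p<2$, so $F_i(\bm\xi)<0$ for $p>2$ and $F_i(\bm\xi)>0$ for $1<p<2$, uniformly in $i=1,\dots,n-1$.

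\emph{Main obstacle.} The essential step is spotting the Taylor-remainder representation that converts each $R(1,1\pm s)$ into a scaled weighted mean with the natural weight $(1\pm v)^{p-2}$; from there the proof is routine covariance bookkeeping driven by the monotonicity of a single explicit ratio. The edge case $s=1$ (when $\ell=0$, $i=1$) poses no real difficulty because $(1-v)^{p-2}$ is integrable near $v=1$ for all $p>1$, keeping $\mu^-(1)$ and the symmetrized integral finite, with strict inequality preserved.
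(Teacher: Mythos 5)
Your proof is correct, and it takes a genuinely different route from the paper's. The paper also reduces to a three-point (single non-boundary point) computation, since each $F_i$ depends only on $\xi_{i-1},\xi_i,\xi_{i+1}$ and equal spacing makes $\xi_i$ the midpoint; but it then signs $F$ at the midpoint \emph{indirectly}, using the fact that the midpoint is the exact minimizer for $p=2$, the monotonicity of the optimal point in $p$ (Theorem \ref{conj:increasing}), the sign of $F'$ (Lemma \ref{lem:Jacobian}), and the convexity/concavity of $F$ (Lemma \ref{lem:convexity}) via a supporting-line inequality at $\hat{\xi}_1(p)$. You instead sign each $F_i$ \emph{directly}: the identity $F_i=-\xi_i\bigl[R(1,1+s)+R(1,1-s)\bigr]$ from degree-one homogeneity, the Taylor-remainder representation turning $R(1,1+s)$ and $R(1,1-s)$ into $p\mu^+(s)$ and $-p\mu^-(s)$ (the sign bookkeeping checks out, the negative denominator $(1-s)^{p-1}-1$ producing exactly the $-p\mu^-$ term), and a Chebyshev symmetrization driven by the monotonicity of $\bigl(\tfrac{1+v}{1-v}\bigr)^{p-2}$; your treatment of the edge case $\ell=0$, $i=1$ (where $s=1$) is also fine since $(1-v)^{p-2}$ is integrable for $p>1$. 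What the two approaches buy: the paper's argument is very short \emph{given} the structural machinery it has already built for other purposes (uniqueness of the stationary point, monotonicity in $p$), while yours is self-contained, needs none of Theorems \ref{conj:increasing} or \ref{thm:unique_stationary}, and yields the quantitative formula $F_i=-p\,\xi_i\,(\mu^+(s)-\mu^-(s))$, which would extend verbatim to other convex $f$ whose second-derivative ratio $f''(\xi_i+v\xi_i)/f''(\xi_i-v\xi_i)$ is monotone.
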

\begin{proof}
  We only need to prove the single-linearization-point case, because $\xi_{i}=\frac{\xi_{i-1}+\xi_{i+1}}{2}$ for $i-1,\ldots,n-1$. Let $\hat{\xi}_1(p)$ be the unique optimal solution for power $p$. Then $F(\hat{\xi}_1(p))=0$ and $\hat{\xi}_1(2) = \frac{\ell+u}{2}$ is the equally-spaced linearization point. By Lemma \ref{lem:Jacobian}, we have that $F'(\hat{\xi}_1(p))>0$.

  For $1<p<2$, by Theorem \ref{conj:increasing}, $\hat{\xi}_1(p)\le \hat{\xi}_1(2)$. Therefore,
  $$F(\hat{\xi}_1(2))\ge F(\hat{\xi}_1(p)) + F'(\hat{\xi}_1(p))(\hat{\xi}_1(2)-\hat{\xi}_1(p))\ge 0,$$
  because of the convexity of $F(\xi_1)$ (Lemma \ref{lem:convexity}(i)).

  For $p>2$, by Theorem \ref{conj:increasing}, $\hat{\xi}_1(p)\ge \hat{\xi}_1(2)$. Therefore,
  $$F(\hat{\xi}_1(2))\le F(\hat{\xi}_1(p)) + F'(\hat{\xi}_1(p))(\hat{\xi}_1(2)-\hat{\xi}_1(p))\le 0,$$
  because of the concavity of $F(\xi_1)$ (Lemma \ref{lem:convexity}(ii)).
\end{proof}
\begin{theorem}\label{thm:Newtonn}
  Starting from an initial point $x^0=(\ell+\frac{(u-\ell)}{n},\dots,\ell+\frac{i(u-\ell)}{n},\dots,\ell+\frac{(n-1)(u-\ell)}{n})^\top$, construct the Newton's-method sequence $\{x^k\}$ by iterating
  $$
  x^{k+1} := x^k - [F'(x^k)]^{-1} F(x^k).
  $$
  Then $\{x^k\}$ is monotonically decreasing (increasing) to $x^*$ when $1<p<2$ (respectively, $p>2$), where $x^*$ satisfies $F(x^*)=0$.
\end{theorem}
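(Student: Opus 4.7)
The plan is to mimic the classical monotone-convergence proof for Newton's method on componentwise-convex systems with nonnegative-inverse Jacobian. I treat $1<p<2$ in detail; the case $p>2$ is symmetric after flipping the inequalities throughout (using the componentwise concavity in Lemma \ref{lem:convexity}(ii) and the opposite sign in Proposition \ref{prop:initial_F}). For the base case, Proposition \ref{prop:initial_F} gives $F(x^0)\ge 0$, while Theorem \ref{conj:increasing} together with Theorem \ref{thm:p2n}, which identifies $x^0$ as the optimizer $\bm\Xi(2)$ at $p=2$, yields $x^*=\bm\Xi(p)\le \bm\Xi(2)=x^0$ componentwise for $p<2$.

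The induction maintains the three conditions that $x^k$ is feasible, $F(x^k)\ge 0$, and $x^k\ge x^*$. Monotone decrease $x^{k+1}\le x^k$ is then immediate from the Newton update, since $[F'(x^k)]^{-1}\ge 0$ by Lemma \ref{lem:Jacobian}(i) and $F(x^k)\ge 0$. For $x^{k+1}\ge x^*$, componentwise convexity of $F$ (Lemma \ref{lem:convexity}(i)) gives the support inequality $F(x^*)\ge F(x^k)+F'(x^k)(x^*-x^k)$, so, combined with $F(x^*)=0$, we get $F'(x^k)(x^k-x^*)\ge F(x^k)$; multiplying by the nonnegative matrix $[F'(x^k)]^{-1}$ yields $x^k-x^*\ge x^k-x^{k+1}$. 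For $F(x^{k+1})\ge 0$, convexity applied at $x^k$ in the direction of $x^{k+1}$ gives $F(x^{k+1})\ge F(x^k)+F'(x^k)(x^{k+1}-x^k)=F(x^k)-F(x^k)=0$, the last equality being the defining relation of the Newton step. Thus $\{x^k\}$ is componentwise monotone decreasing and bounded below by $x^*$, hence convergent to some $\bar x$; passing to the limit in $F'(x^k)(x^{k+1}-x^k)=-F(x^k)$ and using continuity of $F$ yields $F(\bar x)=0$, and uniqueness of the stationary point (Corollary \ref{thm:partial_uniquemin}) forces $\bar x=x^*$.

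The hard part will be confirming that each iterate stays in the open feasible region $\{\bm\xi : \ell<\xi_1<\cdots<\xi_{n-1}<u\}$, since Lemmas \ref{lem:Jacobian} and \ref{lem:convexity} are stated only there. The componentwise bounds $x^*\le x^{k+1}\le x^k\le x^0$ control each coordinate separately, but preserving the strict interleaving $x^{k+1}_i<x^{k+1}_{i+1}$ is not automatic. I would fold feasibility into the induction hypothesis and exploit the tridiagonal $M$-matrix structure of $F'(x^k)$ established in the proof of Lemma \ref{lem:Jacobian}: because $[F'(x^k)]^{-1}$ is nonnegative and the Newton correction $x^k-x^{k+1}$ is bounded above componentwise by $x^k-x^*$, the individual coordinates cannot overshoot $x^*_i$, and so, starting from the strictly increasing $x^k$ and ending at a point still componentwise $\ge x^*$, the strict ordering is preserved through the update. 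The case $p>2$ is entirely analogous, with $\{x^k\}$ monotone increasing to $x^*$ from below, driven by concavity of $F$ in place of convexity.
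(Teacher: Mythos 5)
Your argument is correct and essentially the paper's own: the same monotone-Newton scheme driven by Proposition \ref{prop:initial_F}, Lemma \ref{lem:Jacobian}, and Lemma \ref{lem:convexity}, with the sign of $F(x^k)$ preserved by the componentwise convexity/concavity and the Newton-step identity, and monotonicity obtained by multiplying by the nonnegative $[F'(x^k)]^{-1}$, exactly as in the appendix proof. The only cosmetic difference is that you bound the iterates by $x^*$ (via the support inequality at $x^*$ plus uniqueness of the stationary point), whereas the paper bounds them by $\ell\mathbf{1}$ (resp.\ $u\mathbf{1}$) using convexity at the endpoint; the feasibility/strict-interleaving issue you flag as ``the hard part'' is likewise left implicit in the paper's proof, so it is not a gap relative to it.
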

\begin{proof}
  The result follows from Lemma \ref{lem:Jacobian}, Lemma \ref{lem:convexity} and the ``Monotone Newton Theorem'' \cite[Theorem 13.3.4]{ortega2000iterative}. In the Appendix, we provide a short direct proof.
\end{proof}
\begin{remark}
  For the case of a single non-boundary linearization point, the result also directly follows from the facts that $F'(\xi_1)\ne 0$ and $F(\xi_1)F''(\xi_1)>0$ for all $\xi_1$ between $x^0$ and $\hat{\xi}_1(p)$ (See \cite{mott1957newton}).
\end{remark}

\subsection{Optimal placement of a single non-boundary linearization point}\label{sec:xtothep:single}

It is interesting to make
a detailed study of optimal placement of
 a single non-boundary linearization point, as it relates to
 necessary optimality
conditions for $\bm\xi$, and it can give us a means to carry out a fast parallel
coordinate-descent style algorithm. In this direction, we will establish that
 $\vol(\invbreve{U}^*_p(\ell,\xi_1,u))$
has a unique minimizer.
\begin{theorem}\label{thm:concave1}~
\begin{itemize}
\item[(i)] If $1<p\le 2$, then $\vol(\invbreve{U}^*_p(\ell,\xi_1,u))$ is strictly convex in $\xi_1$.
\item[(ii)] If $p>2$, then $\vol(\invbreve{U}^*_p(\ell,\xi_1,u))$ is quasiconvex in $\xi_1$.
\end{itemize}
\end{theorem}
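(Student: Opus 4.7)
Part~(i) is immediate from results already in the excerpt. Specializing Theorem~\ref{conj:pn} to $n=2$, where the vector $(\xi_1,\dots,\xi_{n-1})$ collapses to the scalar $\xi_1$, yields strict convexity of $\vol(\invbreve{U}^*_p(\ell,\xi_1,u))$ in $\xi_1$ for $1<p\le 2$, which is exactly~(i). No further work is required there.

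For part~(ii), my plan is to reduce quasiconvexity to a one-dimensional sign analysis of the function $F(\xi_1):=F_1(\ell,\xi_1,u)$ (with $F_1$ as defined just before Lemma~\ref{lem:Jacobian}). The preliminary step is algebraic: starting from the expression for $\partial\vol/\partial\xi_i$ in the proof of Theorem~\ref{thm:localmin} and applying $a^2-b^2=(a-b)(a+b)$, I would recognize the two resulting factors through the identities
\begin{align*}
\frac{\xi_i^p+(p-1)\xi_{i+1}^p-p\xi_i\xi_{i+1}^{p-1}}{\xi_{i+1}^{p-1}-\xi_i^{p-1}} &= p(\tau_{i+1}-\xi_i),\\
\frac{\xi_i^p+(p-1)\xi_{i-1}^p-p\xi_i\xi_{i-1}^{p-1}}{\xi_i^{p-1}-\xi_{i-1}^{p-1}} &= p(\xi_i-\tau_i),
\end{align*}
which follow directly from the definition of $\tau_i$. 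Specializing to $n=2$, $i=1$ (so $\xi_0=\ell$, $\xi_2=u$), this produces a factorization
\[
\frac{d}{d\xi_1}\vol(\invbreve{U}^*_p(\ell,\xi_1,u)) \;=\; \frac{(p-1)\,\xi_1^{p-2}\,(\tau_2-\tau_1)}{6}\,F(\xi_1),
\]
in which the prefactor is strictly positive on $(\ell,u)$. Hence $d\vol/d\xi_1$ and $F(\xi_1)$ share signs throughout $(\ell,u)$.

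I would then combine three ingredients already proved: (a) $F$ is concave in $\xi_1$ when $p>2$ (Lemma~\ref{lem:convexity}(ii)); (b) $F$ has a unique zero $\xi_1^*\in(\ell,u)$ (Theorem~\ref{thm:unique_stationary} with $n=2$); and (c) the second derivative of $\vol$ at $\xi_1^*$ is strictly positive (Theorem~\ref{thm:localmin}), so $\vol$ strictly decreases immediately to the left of $\xi_1^*$ and strictly increases immediately to the right. By the sign factorization, $F<0$ on a left neighborhood of $\xi_1^*$ and $F>0$ on a right neighborhood. Since $\xi_1^*$ is the only zero of $F$ in $(\ell,u)$, continuity and the intermediate value theorem forbid any further sign change, so $F\le 0$ on $(\ell,\xi_1^*]$ and $F\ge 0$ on $[\xi_1^*,u)$. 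Consequently $\vol(\invbreve{U}^*_p(\ell,\xi_1,u))$ is non-increasing on $(\ell,\xi_1^*]$ and non-decreasing on $[\xi_1^*,u)$, which is exactly quasiconvexity in $\xi_1$.

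The main obstacle I anticipate is the opening algebraic bookkeeping: one must track signs of the denominators $\xi_{i+1}^{p-1}-\xi_i^{p-1}$ under squaring and then extract $F$ cleanly from the difference of squares. Once the factorization $d\vol/d\xi_1=(\text{positive})\cdot F$ is in hand, the combination of a unique stationary point with its strict-local-minimum character fully determines the sign pattern of $F$. Concavity of $F$ is not strictly needed for the final step (it is what underpins Theorem~\ref{thm:unique_stationary} in the $p>2$ regime), but it would supply an alternative finish via the fact that the superlevel set $\{F>0\}$ of a concave function is an interval.
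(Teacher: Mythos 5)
Your part (i) is exactly the paper's argument (specialize Theorem~\ref{conj:pn} to $n=2$), and your part (ii) is correct but follows a genuinely different, heavier route than the paper. The paper's proof of (ii) is a one-liner: by Theorem~\ref{thm:localmin}, every stationary point of the univariate function $\xi_1\mapsto\vol(\invbreve{U}^*_p(\ell,\xi_1,u))$ is a strict local minimizer, and a differentiable function of one variable all of whose stationary points are strict local minima can have no interior local maximum on any subinterval, hence is quasiconvex; no factorization of the derivative and no appeal to Theorem~\ref{thm:unique_stationary} is needed. You instead make the monotone structure explicit: your identities $A=p(\tau_{i+1}-\xi_i)$ and $-B=p(\xi_i-\tau_i)$ are correct, and they do yield
$\frac{d}{d\xi_1}\vol(\invbreve{U}^*_p(\ell,\xi_1,u))=\frac{(p-1)\xi_1^{p-2}(\tau_2-\tau_1)}{6}\,F(\xi_1)$
with a strictly positive prefactor (positivity of the two numerators via Lemma~\ref{lem:positive} gives $A-B=p(\tau_2-\tau_1)>0$), which is precisely the equivalence $\nabla\vol=0\Leftrightarrow F=0$ that the paper states without proof before Lemma~\ref{lem:Jacobian}. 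Combining the unique zero of $F$ (Theorem~\ref{thm:unique_stationary}, $n=2$) with the strict-local-minimum property at that zero (Theorem~\ref{thm:localmin}) then pins the sign pattern $F\le 0$ on $(\ell,\xi_1^*]$, $F\ge 0$ on $[\xi_1^*,u)$, so $\vol$ is decreasing-then-increasing, which indeed gives quasiconvexity; there is no circularity, since Theorems~\ref{thm:localmin} and~\ref{thm:unique_stationary} are established independently of Theorem~\ref{thm:concave1}. What each approach buys: the paper's is minimal in its dependencies and length; yours costs the extra import of Theorem~\ref{thm:unique_stationary} and the algebraic bookkeeping, but in exchange it delivers more information (strict monotonicity on either side of the unique minimizer, and the explicit derivative--$F$ factorization, which is also useful elsewhere, e.g.\ in the spirit of Theorem~\ref{conj:location} and Proposition~\ref{rem:nearell}). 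Your closing remark is also right that concavity of $F$ (Lemma~\ref{lem:convexity}(ii)) is not needed for the finish, only for the machinery behind Theorem~\ref{thm:unique_stationary}.
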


\begin{proof}
(i) follows directly from Theorem \ref{conj:pn}. (ii) follows directly from Theorem \ref{thm:localmin} (when $\frac{d}{d\xi_1}\vol(\invbreve{U}^*_p(\ell,\xi_1,u))=0$, $\frac{d^2}{d\xi_1^2}\vol(\invbreve{U}^*_p(\ell,\xi_1,u))>0$).
\end{proof}

We immediately have the following very-useful result.
\begin{corollary}\label{thm:uniquemin1}
For all $p>1$, $\vol(\invbreve{U}^*_p(\ell,\xi_1,u))$ has a unique minimizer on $(\ell,u)$.
\end{corollary}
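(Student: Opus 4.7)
The plan is to assemble the conclusion from Theorem \ref{thm:concave1} together with Theorem \ref{thm:localmin}. First I would handle existence. The formula in Theorem \ref{PLperspecvolformula} has only removable singularities as $\xi_1\to\ell$ or $\xi_1\to u$ (the intersection points $P_1,P_2$ admit smooth limits at these boundary values, since $f(x)=x^p$ is smooth and $f'$ has positive lower bound away from $0$ when $\ell>0$, and the ratios collapse nicely when $\ell=0$). So $\vol(\invbreve U^*_p(\ell,\xi_1,u))$ extends continuously to the compact interval $[\ell,u]$, guaranteeing a minimum. At either endpoint, the piecewise-linear under-estimator degenerates to the single secant, so the triangulated base becomes a single triangle whose area strictly exceeds what one obtains with a properly interior linearization point; hence the minimum is attained on the open interval $(\ell,u)$.

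For uniqueness in case $1<p\le 2$: Theorem \ref{thm:concave1}(i) says $\vol(\invbreve U^*_p(\ell,\xi_1,u))$ is strictly convex in $\xi_1$ on $(\ell,u)$, so there is at most one minimizer, and combined with the existence above we are done.

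For uniqueness in case $p>2$: Theorem \ref{thm:concave1}(ii) gives quasiconvexity, which alone would only force the minimizer set to be a subinterval of $(\ell,u)$. To collapse this subinterval to a point, I would invoke Theorem \ref{thm:localmin}: any stationary point of $\vol(\invbreve U^*_p(\ell,\xi_1,u))$ is a strict local minimum. If two distinct global minimizers $\xi_a<\xi_b$ existed with common value $m$, quasiconvexity combined with $m$ being the global minimum would force the volume to be identically $m$ on $[\xi_a,\xi_b]$, contradicting the strict local-minimum property at $\xi_a$ (or $\xi_b$). Since any interior global minimizer is a stationary point, and hence a strict local minimizer, there can be only one.

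The main (modest) obstacle is the $p>2$ case, where pure convexity is unavailable; the essential extra input is Theorem \ref{thm:localmin}, which upgrades ``stationary'' to ``strictly locally minimizing'' and thereby rules out a nontrivial flat plateau of minimizers permitted by quasiconvexity. The existence argument at the boundary is routine but does rely on the removable-singularity observation in the volume formula; beyond this, the proof is an essentially one-line assembly of previously established pieces.
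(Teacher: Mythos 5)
Your proposal is correct and matches the paper's (implicit) route: the paper states this corollary as an immediate consequence of Theorem \ref{thm:concave1} (strict convexity for $1<p\le 2$, quasiconvexity for $p>2$, the latter resting on Theorem \ref{thm:localmin}), and your argument simply fills in the routine details — continuous extension of the volume to $[\ell,u]$ with strictly larger boundary values for existence, and the plateau-exclusion via the strict-local-minimum property of stationary points for uniqueness when $p>2$.
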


\begin{proposition}\label{rem:nearell}
For all $p>2$, $\vol(\invbreve{U}^*_p(\ell,\xi_1,u))$ is convex in  $\xi_1$
 to the right of the minimizer,
 and not convex  near $\ell$.
\end{proposition}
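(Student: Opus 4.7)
The plan is to analyze the second derivative $v''(\xi_1)$ of $v(\xi_1) := \vol(\invbreve{U}^*_p(\ell,\xi_1,u))$ via the identity derived inside the proof of Theorem \ref{thm:localmin}, specialized to $n=2$:
$$v''(\xi_1) = \frac{p}{\xi_1}\, v'(\xi_1) + \frac{\ell}{\xi_1}\, b_0 + \frac{u}{\xi_1}\, b_1,$$
where $b_0, b_1$ are those from the proof of Theorem \ref{thm:localmin}, so that $b_0 \ge 0$ and $b_1 > 0$ by Lemma \ref{lem:positive}.

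For the first assertion (convexity to the right of the minimizer $\xi_1^*$), I would combine three facts already at hand: uniqueness of $\xi_1^*$ (Corollary \ref{thm:uniquemin1}), quasiconvexity of $v$ (Theorem \ref{thm:concave1}(ii)), and the strict-local-min property at $\xi_1^*$ (Theorem \ref{thm:localmin}). Together these yield $v'(\xi_1) > 0$ on $(\xi_1^*, u)$; then every term on the right of the identity is nonnegative and the $b_1$-term is strictly positive, giving $v''(\xi_1) > 0$ throughout $(\xi_1^*, u)$.

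For the second assertion (non-convexity near $\ell$), the plan is to investigate the behavior of $v''(\xi_1)$ as $\xi_1 \to \ell^+$. Using the explicit formula for $v'$ from the proof of Theorem \ref{thm:localmin}, I would Taylor-expand the building blocks $A(\xi_1,\ell)$, $A(\xi_1,u)$, $b_0$, and $b_1$ at $\xi_1 = \ell$; I expect to find $b_0 \to 0$ linearly, while $v'$ and $b_1$ tend to finite limits. After substitution and algebraic simplification, the sign of $\lim_{\xi_1 \to \ell^+} v''(\xi_1)$ should, up to a positive factor, be determined by the elementary expression $-p N_1 D + 2(p-1) u^{p-1} N_2$, where $N_1 := \ell^p + (p-1)u^p - p\ell u^{p-1}$, $N_2 := u^p + (p-1)\ell^p - pu\ell^{p-1}$, and $D := u^{p-1} - \ell^{p-1}$. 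Rescaling by $t := \ell/u \in [0,1)$ reduces this to a one-variable inequality; at $t=0$ the expression equals $-(p-1)(p-2) u^{2p-1}$, which is strictly negative for $p > 2$. The $\ell = 0$ case is especially clean: the $b_0$ term drops and both surviving terms scale like $\xi_1^{p-3}$, leaving a leading coefficient $(p-1)^3 u^2 (2-p)/(6p)$, which is negative for $p > 2$, so $v''(\xi_1) < 0$ in a right-neighborhood of $0$.

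The hard part will be the asymptotic bookkeeping for the second assertion: all three terms in the Hessian identity degenerate as $\xi_1 \to \ell^+$, and extracting the correct leading-order sign requires tracking the cancellations carefully—most delicately in the $\ell=0$ case, where the two surviving terms are of equal order $\xi_1^{p-3}$ and the sign of their sum rests entirely on the small coefficient $(2-p)/(6p)$ produced by their near-cancellation.
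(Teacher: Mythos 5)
Your proposal is correct and essentially reproduces the paper's proof: the same Hessian identity $v''=\frac{p}{\xi_1}v'+\frac{\ell}{\xi_1}b_\ell+\frac{u}{\xi_1}b_u$ from Theorem \ref{thm:localmin} with positivity of all terms once $v'>0$ to the right of the minimizer, and the same limit computation of $v''$ as $\xi_1\to\ell^+$, whose sign factor in your notation, $-pN_1D+2(p-1)u^{p-1}N_2$, is exactly the paper's $-u^{2p-1}k_1(\ell/u)$, checked (as in the paper) at $t=0$ and for small $\ell/u$, with your direct $\ell=0$ expansion yielding the correct leading coefficient $(p-1)^3u^2(2-p)/(6p)$. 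The only cosmetic difference is that you derive $v'>0$ on $(\xi_1^*,u)$ from quasiconvexity, uniqueness of the minimizer, and the strict-local-minimum property, whereas the paper invokes the monotonicity of $(\log h_p)'$ from Theorem \ref{thm:logconcave}; both routes are valid with the results available at that point.
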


\begin{proof}
$$
\frac{\partial^2 \vol(\invbreve{U}^*_p(\ell,\xi_1,u))}{\partial \xi_1^2} = \frac{p}{\xi_1}\frac{\partial \vol(\invbreve{U}^*_p(\ell,\xi_1,u))}{\partial \xi_1}+\frac{\ell}{\xi_1}b_\ell + \frac{u}{\xi_1}b_u,
$$
where
\begin{align*}
  &~~\frac{p}{\xi_1}\frac{\partial \vol(\invbreve{U}^*_p(\ell,\xi_1,u))}{\partial \xi_1}\\
  &=-\frac{(p-1)\xi_1^{p-2}}{6p}\left(\left(\frac{\xi_1^{p}+(p-1)u^{p}-p\xi_1u^{p-1}}{u^{p-1}-\xi_1^{p-1}}\right)^2-\left(\frac{\xi_1^{p}+(p-1)\ell^{p}-p\xi_1\ell^{p-1}}{\xi_{1}^{p-1}-\ell^{p-1}}\right)^2\right),\\
  &b_\ell =\frac{(p-1)^2}{3p}\frac{\xi_1^{p-2}\ell^{p-2}[(p-1)\xi_{1}^{p}+\ell^p-p\xi_{1}^{p-1}\ell)][\xi_{1}^{p}+(p-1)\ell^p-p\xi_{i}\ell^{p-1}]}{(\xi_{1}^{p-1}-\ell^{p-1})^3}>0,\\
  &b_u = \frac{(p-1)^2}{3p}\frac{\xi_1^{p-2}u^{p-2}[(p-1)\xi_{1}^{p}+u^p-p\xi_{1}^{p-1}u)][\xi_{1}^{p}+(p-1)u^p-p\xi_{1}u^{p-1}]}{(u^{p-1}-\xi_{1}^{p-1})^3}>0.
\end{align*}
Suppose that $\xi_1^*$ is the minimizer of $\vol(\invbreve{U}^*_p(\ell,\xi_1,u))$.  By Theorem \ref{thm:logconcave}, we have that
$$\frac{\partial \log h_p(\xi_1)}{\partial \xi_1}=-\frac{1}{h_p(\xi_1)}\frac{\partial \vol(\invbreve{U}^*_p(\ell,\xi_1,u))}{\partial \xi_1}$$
is decreasing on $(\ell,u)$. Therefore,
\[
\frac{\partial \vol(\invbreve{U}^*_p(\ell,\xi_1,u))}{\partial \xi_1}
\left\{
  \begin{array}{ll}
    <0, & \hbox{for $\xi_1\in(\ell,\xi_1^*)$ ;} \\
    >0, & \hbox{for $\xi_1\in(\xi_1^*,u)$.}
  \end{array}
\right.
\]
%
%
So $\frac{\partial^2 \vol(\invbreve{U}^*_p(\ell,\xi_1,u))}{\partial \xi_1^2}>0$ for $\xi_1\in(\xi_1^*,u)$.

Next, we demonstrate that $\frac{\partial^2 \vol(\invbreve{U}^*_p(\ell,\xi_1,u))}{\partial \xi_1^2}$ can be negative near $\ell$ when $\ell/u$ is small enough. Notice that
$\lim\limits_{\xi_1\rightarrow\ell}b_\ell=0$, and
$$
\lim\limits_{\xi_1\rightarrow\ell}\frac{\partial \vol(\invbreve{U}^*_p(\ell,\xi_1,u))}{\partial \xi_1} = -\frac{(p-1)\ell^{p-2}}{6p}\left(\frac{\ell^p+(p-1)u^p-p\ell u^{p-1}}{u^{p-1}-\ell^{p-1}}\right)^2.
$$
Therefore,
\begin{align*}
&\lim\limits_{\xi_1\rightarrow\ell}\frac{\partial^2 \vol(\invbreve{U}^*_p(\ell,\xi_1,u))}{\partial \xi_1^2} \\
=~& -\frac{(p-1)\ell^{p-3}}{6}\left(\frac{\ell^p+(p-1)u^p-p\ell u^{p-1}}{u^{p-1}-\ell^{p-1}}\right)^2+\frac{u}{\ell}\lim\limits_{\xi_1\rightarrow\ell}b_u\\
=~&-\frac{(p-1)\ell^{p-3}[(p-1)u^p+\ell^p-pu^{p-1}\ell]}{6p(u^{p-1}-\ell^{p-1})^3}\\
&~\times\Big[(p-1)u^{p-1}[(p-2)(u^p-\ell^p)-p u \ell(u^{p-2}-\ell^{p-2})]-p\ell(u^{p-1}-\ell^{p-1})^2\Big]\\
:=~&-\frac{(p-1)\ell^{p-3}[(p-1)u^p+\ell^p-pu^{p-1}\ell]}{6p(u^{p-1}-\ell^{p-1})^3}u^{2p-1}k_1(\frac{\ell}{u}),
\end{align*}
where $k_1(t) := (p-1)[(p-2)(1-t^p)-p(t-t^{p-1})]-pt(1-t^{p-1})^2$.
Notice that $\lim\limits_{t\rightarrow 0}k_1(t)=(p-1)(p-2)>0$, because $p>2$. Thus, when $\ell/u$ tends to $0$, $\frac{\partial^2 \vol(\invbreve{U}^*_p(\ell,\xi_1,u))}{\partial \xi_1^2}$ is negative.
\end{proof}

Even though $\vol(\invbreve{U}^*_p(\ell,\xi_1,u))$ is not generally convex in $\xi_1$ for $p>2$,
through a simple transformation, we can finds its unique minimizer (which we already know exists because it is quasiconvex) by equivalently maximizing a
related strictly concave function.
\begin{theorem}\label{thm:logconcave}
If $p>2$, then $h_p(\xi_1):=C-\vol(\invbreve{U}^*_p(\ell,\xi_1,u))$ is strictly log-concave, where
$$
C = \frac{((p-1)u^p+\ell^p-pu^{p-1}\ell)(u^p+(p-1)\ell^p-pu\ell^{p-1})}{6p(u^{p-1}-\ell^{p-1})}.
$$
\end{theorem}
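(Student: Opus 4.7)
The strategy is to exploit a hidden perfect-square structure in $h_p(\xi_1) = C - V(\xi_1)$, which reduces the log-concavity claim to controlling three specific logarithmic terms.

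\medskip

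\textbf{Step 1 (Vanishing at endpoints).} First I would verify that $C = V(\ell) = V(u)$, so that $h_p$ vanishes at both endpoints of $[\ell,u]$. Substituting $\xi_1=\ell$ in the explicit formula for $V$ derived in the proof of Theorem~\ref{thm:localmin}, the first bracketed term has the form $\ell^{p-1}\xi_1^{p-1}(\xi_1-\ell)^2/(\xi_1^{p-1}-\ell^{p-1})$ which tends to $0$ as $\xi_1\to\ell^+$, while the second reduces to $\ell^{p-1}u^{p-1}(u-\ell)^2/(u^{p-1}-\ell^{p-1})$. Combining with the constants and clearing the common denominator $6p(u^{p-1}-\ell^{p-1})$, a monomial-by-monomial comparison with the expansion of $T_\ell(u)\,T_u(\ell)$ shows $6p(u^{p-1}-\ell^{p-1})V(\ell)=T_\ell(u)\,T_u(\ell)$, i.e.\ $V(\ell)=C$, and by symmetry $V(u)=C$. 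So $h_p(\ell)=h_p(u)=0$, and by Theorem~\ref{thm:concave1}(ii) (quasiconvexity of $V$) we have $h_p(\xi_1)>0$ on $(\ell,u)$, so $\log h_p$ is well defined there.

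\medskip

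\textbf{Step 2 (Perfect-square simplification).} Because the additive constants in $V(\xi_1)$ and in $V(\ell)=C$ agree, they cancel in $h_p=C-V$, leaving
\begin{align*}
h_p(\xi_1) = \frac{(p-1)^2}{6p}\Biggl[&\frac{\ell^{p-1}u^{p-1}(u-\ell)^2}{u^{p-1}-\ell^{p-1}} -\frac{\ell^{p-1}\xi_1^{p-1}(\xi_1-\ell)^2}{\xi_1^{p-1}-\ell^{p-1}}\\
&-\frac{\xi_1^{p-1}u^{p-1}(u-\xi_1)^2}{u^{p-1}-\xi_1^{p-1}}\Biggr].
\end{align*}
Placing this over the common denominator $(\xi_1^{p-1}-\ell^{p-1})(u^{p-1}-\xi_1^{p-1})(u^{p-1}-\ell^{p-1})$ and expanding, the numerator collapses to a perfect square. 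Setting $\tilde N(\xi_1):=u^{p-1}(u-\xi_1)(\xi_1^{p-1}-\ell^{p-1})-\ell^{p-1}(\xi_1-\ell)(u^{p-1}-\xi_1^{p-1})$, one obtains the clean formula
\[
h_p(\xi_1) = \frac{(p-1)^2}{6p(u^{p-1}-\ell^{p-1})}\cdot\frac{\tilde N(\xi_1)^2}{(\xi_1^{p-1}-\ell^{p-1})(u^{p-1}-\xi_1^{p-1})}.
\]
This factorization is consistent with the known $p=2$ case where $\tilde N=(u-\xi_1)(\xi_1-\ell)(u-\ell)$ and $h_2=(u-\xi_1)(\xi_1-\ell)(u-\ell)/12$.

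\medskip

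\textbf{Step 3 (Log-concavity).} Taking logarithms,
\[
\log h_p(\xi_1) = \text{const} + 2\log|\tilde N(\xi_1)| - \log(\xi_1^{p-1}-\ell^{p-1}) - \log(u^{p-1}-\xi_1^{p-1}).
\]
I would pull out the known factors $(\xi_1-\ell)$ and $(u-\xi_1)$ of $\tilde N$ (using the Taylor expansions $\xi_1^{p-1}-\ell^{p-1}=(p-1)\ell^{p-2}(\xi_1-\ell)+\cdots$, etc., which show $\tilde N=(\xi_1-\ell)(u-\xi_1)\,M(\xi_1)$ for a positive rational function $M$) and then regroup the logarithms into the form
\[
\log\!\left[\frac{(u-\xi_1)^2}{u^{p-1}-\xi_1^{p-1}}\right] + \log\!\left[\frac{(\xi_1-\ell)^2}{\xi_1^{p-1}-\ell^{p-1}}\right] + 2\log M(\xi_1) + \text{const}.
\]
Strict concavity of each summand is then the target. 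Equivalently, after substituting $t=\xi_1/u$ (or $t=\xi_1/\ell$) this reduces to inequalities of the form $(p-1)t^{p-3}[(p-2)+t^{p-1}](1-t)^2\le 2(1-t^{p-1})^2$, which are the typical inequalities for which the appendix lemmas (Lemma~\ref{lem:positive}, Lemma~\ref{lem:hx}, Lemma~\ref{lem:deltax}) were designed.

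\medskip

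\textbf{Main obstacle.} The grouping in Step 3 is delicate: a three-way decomposition into individually log-concave pieces works cleanly for integer $p$ (e.g.\ for $p=3$ the first two terms become $\log[(u-\xi_1)/(u+\xi_1)]$ and $\log[(\xi_1-\ell)/(\xi_1+\ell)]$, each strictly concave), but for $2<p<3$ with $\ell$ near $0$ the boundary factor $(u-\xi_1)^2/(u^{p-1}-\xi_1^{p-1})$ is not log-concave by itself. In that regime one must argue concavity at the level of the combined sum, using the log-concavity of $M$ to absorb the deficit. I expect this combined-sum analysis to be the technical heart of the proof and the place where the appendix lemmas are invoked.
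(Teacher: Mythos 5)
Your Steps 1--2 are essentially correct, and in fact your ``perfect square'' is exactly the paper's factorization in disguise. Writing $N(x):=\bigl[u^px^{p-1}-u^{p-1}x^p\bigr]+\bigl[\ell^pu^{p-1}-u^p\ell^{p-1}\bigr]+\bigl[x^p\ell^{p-1}-\ell^px^{p-1}\bigr]$, one checks that your $\tilde N$ equals $N$ and that
$h_p(x)=\frac{(p-1)^2}{6p(u^{p-1}-\ell^{p-1})}\cdot\frac{N(x)^2}{(x^{p-1}-\ell^{p-1})(u^{p-1}-x^{p-1})}$.
The paper instead writes $h_p=\frac{(p-1)^2(u^{p-1}-\ell^{p-1})}{6p}\,q_1(x)q_2(x)$ with $q_1(x)=\frac{u^p-\ell^p}{u^{p-1}-\ell^{p-1}}-\frac{x^p-\ell^p}{x^{p-1}-\ell^{p-1}}$ and $q_2(x)=\frac{x^p-u^p}{x^{p-1}-u^{p-1}}-\frac{u^p-\ell^p}{u^{p-1}-\ell^{p-1}}$; since $q_1=N/\bigl[(u^{p-1}-\ell^{p-1})(x^{p-1}-\ell^{p-1})\bigr]$ and $q_2=N/\bigl[(u^{p-1}-\ell^{p-1})(u^{p-1}-x^{p-1})\bigr]$, your square is exactly $q_1q_2$ up to the constant. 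So the algebraic setup is fine (the endpoint identity $C=\lim_{\xi_1\to\ell}V=\lim_{\xi_1\to u}V$ is also correct).

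The genuine gap is Step 3, which is the entire analytic content of the theorem and is not carried out. Your proposed three-way grouping into $\log\frac{(u-\xi_1)^2}{u^{p-1}-\xi_1^{p-1}}+\log\frac{(\xi_1-\ell)^2}{\xi_1^{p-1}-\ell^{p-1}}+2\log M$ fails termwise, as you yourself note for $2<p<3$ with $\ell$ near $0$ (the first summand has positive second derivative there), and the proposed repair --- ``use the log-concavity of $M$ to absorb the deficit'' --- is neither formulated as a precise inequality nor proved; log-concavity of $M$ itself is only asserted. So no proof of $(\log h_p)''<0$ is actually given for the full range $p>2$. The regrouping that does work is the paper's: pair each copy of $N$ with one of the two denominators, i.e.\ prove $q_1$ and $q_2$ are each strictly log-concave. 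For $q_1$ this is immediate, since $q_1>0$ and $q_1''<0$ (Lemmas \ref{lem:positive}(ii) and \ref{lem:hx}(ii)) give $q_1q_1''-(q_1')^2<0$; the delicate factor is $q_2$, which is positive, increasing and \emph{convex}, and there the paper uses the sharper bound $q_2(x)\le\frac{x^{p-1}(x-u)}{x^{p-1}-u^{p-1}}$ together with Lemma \ref{lem:positive}(ii) and Lemma \ref{lem:deltax}(ii) to get $q_2q_2''-(q_2')^2<0$. To complete your proposal you would need an argument of comparable substance for your own grouping (in particular a proof about $M$), or simply switch to the $q_1q_2$ grouping, which your identity already puts within reach.
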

\begin{proof}
$h_p(\xi_1) = \frac{(p-1)^2(u^{p-1}-\ell^{p-1})}{6p}q_1(\xi_1)q_2(\xi_1)$, where $q_1(x) = \left(\frac{u^p-\ell^p}{u^{p-1}-\ell^{p-1}}-\frac{x^p-\ell^p}{x^{p-1}-\ell^{p-1}}\right)$,
 and $q_2(x) = \left(\frac{x^p-u^p}{x^{p-1}-u^{p-1}}-\frac{u^p-\ell^p}{u^{p-1}-\ell^{p-1}}\right)$.
We calculate
\begin{align*}
q_1'(x)&=-\frac{x^{p-2}[x^p-\ell^p-p\ell^{p-1}(x-\ell)]}{(x^{p-1}-\ell^{p-1})^2};\\
q_1''(x)&=-\frac{(p-1)x^{p-3}\ell^{p-1}[(p-2)(x^p-\ell^p)-p\ell x(x^{p-2}-\ell^{p-2})]}{(x^{p-1}-\ell^{p-1})^3}.
\end{align*}
Similarly,
\begin{align*}
q_2'(x)&=\frac{x^{p-2}[x^p-u^p-p u^{p-1}(x-u)]}{(x^{p-1}-u^{p-1})^2};\\
q_2''(x)&=\frac{(p-1)x^{p-3}u^{p-1}[(p-2)(x^p-u^p)-p u x(x^{p-2}-u^{p-2})]}{(x^{p-1}-u^{p-1})^3}.
\end{align*}
Because of Lemma \ref{lem:positive}(ii) (See Appendix), $q_1'(x)< 0$, $q_2'(x)>0$ on $(\ell,u)$. Thus $q_1(x)>q_1(u)=0$ and $q_2(x)>q_2(\ell)=0$. Because of Lemma \ref{lem:hx}(ii) (See Appendix), $q_1''(x)<0$, $q_2''(x)>0$.

We are going to show that $q_1(x)$ and $q_2(x)$ is strictly log-concave for $p> 2$.
$$(\log q_1(x))'' = \frac{q_1(x)q_1''(x) - (q_1'(x))^2}{q_1(x)^2}< 0.$$
Note that $q_2''(x)>0$ and $q_2(x)\le \frac{x^p-u^p}{x^{p-1}-u^{p-1}}-u=\frac{x^{p-1}(x-u)}{x^{p-1}-u^{p-1}}$, thus
\begin{align*}
&q_2(x)q_2''(x)-(q_2'(x))^2 \\
\le~& \frac{x^{p-1}(x-u)}{x^{p-1}-u^{p-1}}\frac{(p-1)x^{p-3}u^{p-1}[(p-2)(x^p-u^p)-p u x(x^{p-2}-u^{p-2})]}{(x^{p-1}-u^{p-1})^3}\\
&-\frac{x^{2(p-2)}[x^p-u^p-p u^{p-1}(x-u)]^2}{(x^{p-1}-u^{p-1})^4}\\
=~& \frac{x^{2(p-2)}}{(x^{p-1}-u^{p-1})^4}\Big[(p-1)u^{p-1}(x-u)[(p-2)(x^p-u^p)-p u x(x^{p-2}-u^{p-2})]\\
&\phantom{blankblankblank}-[x^p-u^p-p u^{p-1}(x-u)]^2\Big]\\
=& \frac{x^{2(p-2)}}{(x^{p-1}-u^{p-1})^4} \Big[-(p-1)u^{p-2}(x-u)^2[u^p-x^p-px^{p-1}(u-x)] \\
&\phantom{blankblankblank}-x^2[(x^{p-1}-u^{p-1})^2 - (p-1)^2u^{p-2}x^{p-2
}(x-u)^2]\Big].
\end{align*}
By Lemma \ref{lem:positive}(ii) and Lemma \ref{lem:deltax}(ii) (See Appendix), we have $(\log q_2(x))''<0$.

Therefore, $h_p(x)=\frac{(p-1)^2(u^{p-1}-\ell^{p-1})}{6p}q_1(x)q_2(x)$ is the product of two strictly log-concave function and is thus strictly log-concave.
\end{proof}

Next, we provide some bounds on the minimizing $\xi_1$.
This can be useful for determining a reasonable initial point for
a minimization algorithm (better than equally spaced) or even for a reasonable static
rule for selecting linearization points. Additionally,
we can see these bounds as necessary conditions for a
minimizer.

\begin{theorem}\label{conj:location}
For fixed $\ell$ and $u$, assume that $\xi_1$ minimizes $\vol(\invbreve{U}^*_p(\ell,\xi_1,u))$, then
\begin{itemize}
\item[(i)] if $p=2$, then $\xi_1=\frac{u+\ell}{2}$;
\item[(ii)] if $1<p< 2$, then
$$
\left(\frac{u^{p-1}+\ell^{p-1}}{2}\right)^{\frac{1}{p-1}}<\frac{(p-1)(u^p-\ell^p)}{p(u^{p-1}-\ell^{p-1})}<\xi_1<\left(\frac{u^p-\ell^p}{p(u-\ell)}\right)^{\frac{1}{p-1}}<\frac{u+\ell}{2};
$$
\item[(iii)] if $p>2$, then
$$
\left(\frac{u^{p-1}+\ell^{p-1}}{2}\right)^{\frac{1}{p-1}}>\frac{(p-1)(u^p-\ell^p)}{p(u^{p-1}-\ell^{p-1})}>\xi_1>\left(\frac{u^p-\ell^p}{p(u-\ell)}\right)^{\frac{1}{p-1}}>\frac{u+\ell}{2}.
$$
 \end{itemize}
\end{theorem}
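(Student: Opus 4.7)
My plan splits the inequality chain into two kinds: (a) algebraic/analytic comparisons among the four fixed quantities
$a_1 := ((\ell^{p-1}+u^{p-1})/2)^{1/(p-1)}$,
$a_2 := (p-1)(u^p-\ell^p)/(p(u^{p-1}-\ell^{p-1}))$,
$a_3 := ((u^p-\ell^p)/(p(u-\ell)))^{1/(p-1)}$, and $(u+\ell)/2$, none of which involve $\xi_1^*$; and (b) signed evaluations of the first-order condition $F(\xi_1)=0$ at $a_2$ and $a_3$, which combined with the global sign pattern of $F$ locate $\xi_1^*$ in the chain. Part (i) is immediate: at $p=2$, $F(\xi_1)$ collapses to $2\xi_1-u-\ell$, whose unique zero on $(\ell,u)$ is $(u+\ell)/2$.

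For the outer comparisons in (ii) and (iii), the main tool is Jensen applied to two specific power functions. Writing $a_3^{p-1} = \frac{1}{u-\ell}\int_\ell^u t^{p-1}\,dt$ and noting that $t\mapsto t^{p-1}$ is concave for $1<p<2$ and convex for $p>2$, Jensen (integral-mean vs.\ midpoint value, and integral-mean vs.\ endpoint average) compares $a_3$ simultaneously with $(u+\ell)/2$ and with $a_1$. Under the substitution $s=t^{p-1}$ one rewrites $a_2 = \frac{1}{U-L}\int_L^U s^{1/(p-1)}\,ds$ with $L=\ell^{p-1},\,U=u^{p-1}$; since $s\mapsto s^{1/(p-1)}$ has the opposite convexity, Jensen (midpoint value vs.\ integral mean) places $a_2$ on the correct side of $a_1 = ((L+U)/2)^{1/(p-1)}$.

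For the middle inequalities I use the representation $F(\xi_1) = p\left[R_\ell(\xi_1)/S_\ell(\xi_1) - R_u(\xi_1)/S_u(\xi_1)\right]$, where $R_\bullet := f(\xi_1)-T_\bullet(\xi_1)$ is the gap between $f$ at $\xi_1$ and the tangent at $\bullet\in\{\ell,u\}$, and $S_\bullet := |f'(\xi_1)-f'(\bullet)|$. Corollary \ref{thm:uniquemin1}, combined with the endpoint signs $F(\ell^+)<0<F(u^-)$ (obtained by a short limiting computation on the two summands defining $F$), forces $F<0$ on $(\ell,\xi_1^*)$ and $F>0$ on $(\xi_1^*,u)$, so the signs of $F(a_2)$ and $F(a_3)$ decide the middle inequalities. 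The evaluation at $a_2$ is clean: since $a_2$ is by definition the intersection abscissa of $T_\ell$ and $T_u$, we have $R_\ell(a_2)=R_u(a_2)$, so $F(a_2) = pR(S_u-S_\ell)/(S_\ell S_u)$ has sign $\operatorname{sgn}(f'(u)+f'(\ell)-2f'(a_2)) = \operatorname{sgn}(a_1^{p-1}-a_2^{p-1}) = \operatorname{sgn}(a_1-a_2)$, matching what part (a) already gave.

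The main obstacle is $\operatorname{sgn} F(a_3)$. Using the defining identity $pa_3^{p-1}(u-\ell)=u^p-\ell^p$ together with the double-integral identity
$R_\ell(a_3)S_u - R_u(a_3)S_\ell = p^2(p-1)^2 \iint_{[\ell,a_3]\times[a_3,u]}t^{p-2}s^{p-2}(2a_3-t-s)\,dt\,ds,$
the sign reduces to an explicit algebraic expression in $\ell,u,p$. In the benchmark case $\ell=0$ (and by homogeneity $u=1$), one finds $a_3=p^{-1/(p-1)}$ and $F(a_3)$ becomes a positive multiple of $(p+2)-p^{p/(p-1)}$; taking logarithms, the auxiliary $\psi(p):=p\log(1+2/p)-\log(p+2)$ satisfies $\psi(1)=\psi(2)=0$ with $\psi''(p)=(p-4)/(p(p+2)^2)$, so strict concavity on $(1,4)$ forces $\psi>0$ on $(1,2)$, and $\psi'(2)=\log 2 - 3/4 < 0$ combined with the monotonicity of $\psi'$ for large $p$ gives $\psi<0$ on $(2,\infty)$. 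This yields $\operatorname{sgn} F(a_3) = \operatorname{sgn}(2-p)$ in the $\ell=0$ case. For general $\ell$, I expect to collapse the cross-terms via the identity $(u^p-\ell^p)(u^{p-1}+\ell^{p-1}) = (u^{p-1}-\ell^{p-1})(u^p+\ell^p) + 2u^{p-1}\ell^{p-1}(u-\ell)$ and, after substituting $a_3^{p-1}=(u^p-\ell^p)/(p(u-\ell))$, to factor the result as a positive quantity times the same one-parameter sign-controller in $p$.
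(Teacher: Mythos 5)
Your framework is sound, and several pieces are correct and genuinely different from (in places nicer than) the paper's route: part (i) is fine; the Hermite--Hadamard/Jensen treatment of the two outer, $\xi_1$-free inequalities replaces the paper's explicit monotonicity arguments (the functions $J(t)$ and $L(t)$ in its proof); your evaluation of $F$ at $a_2$ is essentially the paper's own observation that the linear term vanishes at $\underline{\xi}_1$, so the lower/upper placement of $\xi_1$ relative to $a_2$ reduces to the already-proved comparison of $a_2$ with $a_1$; the sign pattern of $F$ on $(\ell,\xi_1^*)$ and $(\xi_1^*,u)$ is justified by the paper's results (sign of $F$ equals sign of $\frac{d}{d\xi_1}\vol$, plus Corollary \ref{thm:uniquemin1} and uniqueness of the stationary point via Theorems \ref{thm:concave1} and \ref{thm:unique_stationary}); and your double-integral identity $R_\ell S_u-R_u S_\ell=\iint f''(t)f''(s)(2x-t-s)\,dt\,ds$ is correct, as is the resulting $\ell=0$ computation (the reduction of $\operatorname{sgn}F(a_3)$ to the sign of $(p+2)-p^{p/(p-1)}$ and the analysis of $\psi(p)=(p-1)\log(p+2)-p\log p$ both check out).

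The genuine gap is the general-$\ell$ case of $\operatorname{sgn}F(a_3)$, which is the crux of the theorem and which you leave as an expectation (``I expect to \dots factor the result as a positive quantity times the same one-parameter sign-controller in $p$''). Nothing written supports such a factorization: the quoted identity is a true but inert rearrangement, and after substituting $a_3^{p-1}=(u^p-\ell^p)/(p(u-\ell))$ the expression still involves $a_3$, $a_3^p$, $\ell$, $u$ inhomogeneously, with no visible $t$-free sign factor. The inequality is also razor-thin away from $\ell=0$: for $p=1.5$, $\ell/u=0.5$, your double integral evaluates to roughly $8\times 10^{-6}$ while its three constituent terms are of order $10^{-1}$, so an enormous cancellation must be certified; this is exactly why the paper needs the nontrivial auxiliary inequality \eqref{eqn:2-p/3} (with the unexpected exponent $\frac{2-p}{3}$) followed by a monotonicity-in-$t$ argument showing $H(t)>0$ from $H'(t)<0$ and $H(1^-)=0$. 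Note too that homogeneity only lets you normalize $u=1$, not $\ell=0$, and Theorem \ref{prop:deltamono} controls the width $\overline{\xi}_1-\underline{\xi}_1$, not the sign of $F(\overline{\xi}_1)$, so there is no shortcut from your benchmark case to the general one. Until you prove $\operatorname{sgn}F(a_3)=\operatorname{sgn}(2-p)$ for all $0\le\ell<u$, the middle inequalities $\xi_1<a_3$ (for $1<p<2$) and $\xi_1>a_3$ (for $p>2$) remain unproven.
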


\begin{proof}
(i) follows directly from Theorem \ref{thm:p2n} when $n=2$. We only prove (ii), because (iii) follows a similar proof. $\xi_1$ satisfies the optimal condition $\frac{d}{d\xi_1}\vol(\invbreve{U}^*_p(\ell,\xi_1,u))=0$, which is equivalent to
\begin{equation*}
F(x):=\frac{x^p+(p-1)\ell^p-p\ell^{p-1}x}{x^{p-1}-\ell^{p-1}}-\frac{x^p+(p-1)u^p-p u^{p-1}x}{u^{p-1}-x^{p-1}} = 0.
\end{equation*}
First, note that if $1<p<2$, and $x_0$ satisfies $F(x_0)<0$, then $\xi_1>x_0$; if $x_0$ satisfies $F(x_0)>0$, then $\xi_1<x_0$.

For the lower bound, notice that
\begin{align*}
  F(x) &=\frac{x^p+(p-1)\ell^p-p\ell^{p-1}x}{x^{p-1}-\ell^{p-1}}-\frac{x^p+(p-1)u^p-p u^{p-1}x}{u^{p-1}-x^{p-1}}\\
  &=-(x^p+(p-1)u^p-p u^{p-1}x)\left(\frac{1}{u^{p-1}-x^{p-1}}-\frac{1}{x^{p-1}-\ell^{p-1}}\right)\\
  &~~~~~~ -\frac{(p-1)(u^p-\ell^p)-p(u^{p-1}-\ell^{p-1})x}{x^{p-1}-\ell^{p-1}}.
\end{align*}
Let $\underline{\xi}_1 := \frac{(p-1)(u^p-\ell^p)}{p(u^{p-1}-\ell^{p-1})}$. To show $F(\underline{\xi}_1)<0$, we only need to show that $\underline{\xi}_1^{p-1}-\ell^{p-1}>u^{p-1}-\underline{\xi}_1^{p-1}$, i.e., $\underline{\xi}_1>\left(\frac{u^{p-1}+\ell^{p-1}}{2}\right)^{\frac{1}{p-1}}$, which is the first inequality. Then we could conclude that $\xi_1>\underline{\xi}_1$.

To show the first inequality, we take logarithm on both sides and let $t:=\frac{\ell}{u}$. Then the inequality that we are going to prove is
$$
J(t):=\log(1-t^p)-\log(1-t^{p-1}) +\log\frac{p-1}{p} - \frac{1}{p-1}(\log (t^{p-1}+1) -\log{2})>0
$$
Notice that $\lim\limits_{t\rightarrow 1^-}J(t)=\log\frac{p}{p-1} +\log\frac{p-1}{p}=0$, and
\begin{align*}
J'(t) &= \frac{pt^{p-1}}{t^p-1} - \frac{(p-1)t^{p-2}}{t^{p-1}-1} -\frac{1}{p-1}\frac{(p-1)t^{p-2}}{t^{p-1}+1}\\
&=\frac{t^{p-2}((p-2)(1-t^p)-pt(1-t^{p-2}))}{(t^p-1)(t^{p-1}-1)(t^{p-1}+1)}.
\end{align*}
By Lemma \ref{lem:hx}(i) (See Appendix), $J'(t)<0$ on $(0,1)$. Thus $J(t)>0$ for $t\in(0,1)$.

For the upper bound, first we claim that for $0<t<1$, we have
\begin{equation}\label{eqn:2-p/3}
  \frac{t^p+(p-1)-pt}{1+(p-1)t^p-p t^{p-1}} > t^{\frac{2-p}{3}}.
\end{equation}
To prove the claim, let $K(t):=t^{\frac{2-p}{3}}(1+(p-1)t^p-p t^{p-1})-t^p-(p-1)+pt$.
\begin{align*}
  K'(t) &= \frac{2-p}{3}t^{-\frac{p+1}{3}} + (p-1)\frac{2(p+1)}{3}t^{\frac{2p-1}{3}} -p\frac{2p-1}{3}t^{\frac{2p-4}{3}} -pt^{p-1} +p\\
  &=t^{-\frac{p+1}{3}}\left(\frac{2-p}{3}+\frac{2(p-1)(p+1)}{3}t^{p} - \frac{p(2p-1)}{3}t^{p-1} - pt^{\frac{4p-2}{3}}+ pt^{\frac{p+1}{3}}\right)\\
  &=:t^{-\frac{p+1}{3}} K_1(t).\\
  K_1'(t)&=\frac{d }{dt}\left(t^{\frac{p+1}{3}}K'(t)\right) \\
  =&\frac{2p(p^2-1)}{3}t^{p-1} - \frac{p(p-1)(2p-1)}{3}t^{p-2} - \frac{p(4p-2)}{3}t^{\frac{4p-5}{3}}+ \frac{p(p+1)}{3}t^{\frac{p-2}{3}}\\
  =& \frac{p}{3}t^{\frac{p-2}{3}}\left(2(p^2-1)t^{\frac{2p-1}{3}} - (2p-1)(p-1)t^{\frac{2p-4}{3}} - 2(2p-1)t^{p-1}+ (p+1)\right)\\
  =:&\frac{p}{3}t^{\frac{p-2}{3}}K_2(t).\\
  K_2'(t)&=\frac{d}{dt}\left(\frac{3}{p}t^{\frac{2-p}{3}}\frac{d }{dt}\left(t^{\frac{p+1}{3}}K'(t)\right)\right)  \\
  =& 2(2p-1)(p-1)t^{\frac{2p-7}{3}}\left( \frac{p+1}{3}t - \frac{p-2}{3} - t^{\frac{p+1}{3}}\right)\\
  =& -2(2p-1)(p-1)t^{\frac{2p-7}{3}}\left((t^{\frac{p+1}{3}}-1)- \frac{p+1}{3}(t-1)\right) > 0.\\
\end{align*}
The last inequality follows from the strict concavity of function $x^{\frac{p+1}{3}}$ when $1<p<2$. Because $K_2(1)=0$, we have $K_2(t)< 0$ on $(0,1)$, which implies $K_1(t)$ is decreasing on $(0,1)$. Along with $K_1(1)=0$, which implies $K_1(t)> 0$ on $(0,1)$. Therefore, $K(t)$ is increasing on $(0,1)$, and $K(t)< K(1)=0$, which proves the claim.
Letting $\overline{\xi}_1:=\left(\frac{u^p-\ell^p}{p(u-\ell)}\right)^{\frac{1}{p-1}}$, and $t:=\frac{\ell}{u}$, we have
\begin{align*}
&\ell^p -p\overline{\xi}_1^{p-1}\ell = u^p - p\overline{\xi}_1^{p-1}u,\\
&\frac{\overline{\xi}_1^{p-1}-\ell^{p-1}}{u^{p-1}-\overline{\xi}_1^{p-1}} = \frac{p(u-\ell)(\overline{\xi}_1^{p-1}-\ell^{p-1})}{p(u-\ell)(u^{p-1}-\overline{\xi}_1^{p-1})} = \frac{(p-1)t^p+1-pt^{p-1}}{t^p+(p-1)-pt}.
\end{align*}
We are going to show that $F(\overline{\xi}_1)>0$. Letting $h(x):=\frac{x^p+(p-1)-px}{x(x^{p-1}-1)}$, we have
$$h'(x)=\frac{(p-1)((p-1)x^p+1-px^{p-1})}{x^2(x^{p-1}-1)^2},$$
and
\begin{align*}
  H(t):=\frac{F(\overline{\xi}_1)}{\overline{\xi}_1} &=h\left(\frac{\overline{\xi}_1}{\ell}\right) + h\left(\frac{\overline{\xi}_1}{u}\right)\\
  &=h\left(\left(\frac{(t^p-1)}{pt^{p-1}(t-1)}\right)^{\frac{1}{p-1}}\right) + h\left(\left(\frac{(t^p-1)}{p(t-1)}\right)^{\frac{1}{p-1}}\right).
\end{align*}
\begin{align*}
  \frac{d H(t)}{dt} &=-h'\left(\frac{\overline{\xi}_1}{\ell}\right)\frac{1}{p-1}\left(\frac{\overline{\xi}_1}{\ell}\right)^{2-p}\frac{t^p+(p-1)-pt}{pt^p(t-1)^2}\\
  &~~~~~ + h'\left(\frac{\overline{\xi}_1}{u}\right)\frac{1}{p-1}\left(\frac{\overline{\xi}_1}{u}\right)^{2-p}\frac{(p-1)t^p+1-pt^{p-1}}{p(t-1)^2}\\
  &=-\frac{\ell^{p-2}}{p(t-1)^2}\left(\frac{\overline{\xi}_1}{u}\right)^{-p} \frac{((p-1)\overline{\xi}_1^p+\ell^p-p\overline{\xi}_1^{p-1}\ell)(t^p+(p-1)-pt)}{(\overline{\xi}_1^{p-1}-\ell^{p-1})^2}\\
  &~~~~~ +\frac{u^{p-2}}{p(t-1)^2}\left(\frac{\overline{\xi}_1}{u}\right)^{-p} \frac{((p-1)\overline{\xi}_1^p+u^p-p\overline{\xi}_1^{p-1}u)((p-1)t^p+1-pt^{p-1})}{(u^{p-1}-\overline{\xi}_1^{p-1})^2}\\
  &=\frac{\ell^{p-2}}{p(t-1)^2}\left(\frac{\overline{\xi}_1}{u}\right)^{-p} ((p-1)\overline{\xi}_1^p+\ell^p-p\overline{\xi}_1^{p-1}\ell)\\
  &~~~~~ \times \left(-\frac{(t^p+(p-1)-pt)}{(\overline{\xi}_1^{p-1}-\ell^{p-1})^2}+\frac{t^{2-p}((p-1)t^p+1-pt^{p-1})}{(u^{p-1}-\overline{\xi}_1^{p-1})^2}\right)\\
  &=\frac{\ell^{p-2}}{p(t-1)^2}\left(\frac{\overline{\xi}_1}{u}\right)^{-p} ((p-1)\overline{\xi}_1^p+\ell^p-p\overline{\xi}_1^{p-1}\ell)\\
  &~~~~~ \times \frac{(p-1)t^p+1-pt^{p-1}}{(u^{p-1}-\overline{\xi}_1^{p-1})^2} \left(t^{2-p}-\left(\frac{t^p+(p-1)-pt}{(p-1)t^p+1-pt^{p-1}}\right)^3\right) < 0.\\
\end{align*}
The last inequality follows from \eqref{eqn:2-p/3}. Therefore, along with $\lim\limits_{t\rightarrow 1^-}H(t)=0$, we have $H(t)> 0$ for $t\in (0,1)$, which implies $F(\overline{\xi}_1)>0$ and $\xi_1<\overline{\xi}_1$.

To show that
$
\frac{u+\ell}{2}>\overline{\xi}_1,
$
we take logarithm on both sides and let $t:=\frac{\ell}{u}$. Then the inequality that we are going to prove is
$$
L(t):=\log(1-t^p)-\log(1-t) -\log p - (p-1)(\log (t+1) -\log{2})<0.
$$
Notice that $\lim\limits_{t\rightarrow 1^-}L(t)=0$, and
\begin{align*}
L'(t) &= \frac{pt^{p-1}}{t^p-1} - \frac{1}{t-1} -\frac{p-1}{t+1}\\
&=\frac{(p-2)(t^p-1)-pt(t^{p-2}-1)}{(t^p-1)(t^2-1)}.
\end{align*}
By Lemma \ref{lem:hx}(i) (See Appendix), $L'(t)>0$ on $(0,1)$. Thus $L(t)<0$ for $t\in(1,\infty)$.
\end{proof}

Just as we determined the optimal location of a linearization
point as $p$ varies  (Theorem \ref{conj:increasing}),
we now determine the behavior of these bounds (Theorem \ref{conj:location})
when $p$ varies. Toward this goal, let $t:=\frac{\ell}{u}$, and let
$$
\Delta(p,t):= \frac{\overline{\xi}_1-\underline{\xi}_1}{u-\ell} = \frac{1}{1-t}\left(\left(\frac{1-t^p}{p(1-t)}\right)^{\frac{1}{p-1}}-\frac{(p-1)(1-t^p)}{p(1-t^{p-1})}\right),
$$
where
 $\underline{\xi}_1 := \frac{(p-1)(u^p-\ell^p)}{p(u^{p-1}-\ell^{p-1})}$ and
$\overline{\xi}_1:=\left(\frac{u^p-\ell^p}{p(u-\ell)}\right)^{\frac{1}{p-1}}$.
We will demonstrate that the behavior of  $\Delta(p,t)$  can be bounded, in a useful way,
by the behavior of  $\Delta(p,0)$. Then we will analyze $\Delta(p,0)$.

\begin{theorem}\label{prop:deltamono}~%
  \begin{itemize}
    \item[(i)] For $1<p<2$, $\Delta(p,t)$ is decreasing in $t$, implying that $0<\Delta(p,t)\le\Delta(p,0)$;
    \item[(ii)] for $p>2$, $(1-t)\Delta(p,t)$ is increasing in $t$, implying that $0>(1-t)\Delta(p,t)\ge\Delta(p,0)$.
  \end{itemize}
\end{theorem}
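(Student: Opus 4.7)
Since $\Delta(p,t)$ depends on $\ell$ and $u$ only through $t=\ell/u$, I would treat it as a single-variable function of $t\in(0,1)$. Set
\[
A(t):=\left(\tfrac{1-t^p}{p(1-t)}\right)^{\tfrac{1}{p-1}}, \qquad B(t):=\tfrac{(p-1)(1-t^p)}{p(1-t^{p-1})},
\]
so that $(1-t)\Delta(p,t)=A(t)-B(t)$ and $\Delta(p,t)=(A(t)-B(t))/(1-t)$. A Taylor expansion around $t=1$ (using $1-t^p=p(1-t)-\binom{p}{2}(1-t)^2+O((1-t)^3)$) yields $A(t)-B(t)=-\tfrac{p-2}{24}(1-t)^2+O((1-t)^3)$, so both $(1-t)\Delta(p,t)$ and $\Delta(p,t)$ tend to $0$ as $t\to 1^-$. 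This already exhibits the correct leading sign ($A>B$ iff $p<2$), in line with Theorem~\ref{conj:location}(ii)--(iii).

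For case (ii), $p>2$, the goal reduces to the single pointwise inequality $A'(t)\ge B'(t)$ on $(0,1)$. Logarithmic differentiation gives
\[
\frac{A'(t)}{A(t)} = \frac{1+(p-1)t^p-pt^{p-1}}{(p-1)(1-t)(1-t^p)}, \qquad \frac{B'(t)}{B(t)} = \frac{t^{p-2}\,[t^p+(p-1)-pt]}{(1-t^p)(1-t^{p-1})}.
\]
After clearing the common positive factor $(1-t^p)$ and invoking the positivity of $1+(p-1)t^p-pt^{p-1}$ and $t^p+(p-1)-pt$ (Lemma~\ref{lem:positive}), the inequality reduces to an algebraic statement in $p$ and $t$ whose sign is controlled by the same $t^p$-polynomial sign patterns used in Section~\ref{sec:xtothep:nonquadratics} (in particular Lemma~\ref{lem:hx}). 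Combined with $(1-t)\Delta(p,t)\to 0$ at $t=1^-$ and the negativity of $\Delta$ from Theorem~\ref{conj:location}(iii), monotonicity then yields $0>(1-t)\Delta(p,t)\ge \Delta(p,0)$ upon evaluating at $t\ge 0$.

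Case (i), $1<p<2$, is more delicate: I need
\[
(1-t)^2\,\partial_t\Delta(p,t) \;=\; (A'(t)-B'(t))(1-t)+(A(t)-B(t)) \;\le\; 0,
\]
which is not a termwise sign comparison (one in fact expects $A'<B'$ here, the opposite sign from (ii), but this is more than compensated by the positive $A-B$ term). My plan is to use the identity $(1-t)A'(t)=A(t)\cdot \psi_A(t)$ for the simple rational expression $\psi_A(t):=\tfrac{1+(p-1)t^p-pt^{p-1}}{(p-1)(1-t^p)}$ (and the analogue for $B$), rewrite the combined quantity as a common-denominator expression, and factor its numerator as a product of sign-definite pieces; the candidates are $t^p+(p-1)-pt$ and $(p-1)t^p+1-pt^{p-1}$ (Lemma~\ref{lem:positive}) together with $(p-2)(t^p-1)-p(t^{p-1}-t)$ (Lemma~\ref{lem:hx}), whose sign reversal at $p=2$ is precisely what drives the (i)/(ii) dichotomy. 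Once monotonicity holds, $0<\Delta(p,t)\le\Delta(p,0)$ follows immediately from Theorem~\ref{conj:location}(ii) and from evaluating monotonicity at $t\ge 0$.

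\textbf{Main obstacle.} The principal hurdle is case (i): because the quantity $(A'-B')(1-t)+(A-B)$ is a sum of two terms with opposite sign contributions, one cannot argue termwise, and the required algebraic regrouping of the numerator into sign-definite factors is nontrivial. Case (ii), by contrast, is a single-sign inequality amenable to direct term-by-term comparison, and the Taylor-expansion computation at $t=1^-$ does most of the bookkeeping for the boundary behavior in both cases.
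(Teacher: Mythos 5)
Your reduction is set up correctly: writing $(1-t)\Delta(p,t)=A(t)-B(t)$ with $A(t)=\bigl(\tfrac{1-t^p}{p(1-t)}\bigr)^{1/(p-1)}$, $B(t)=\tfrac{(p-1)(1-t^p)}{p(1-t^{p-1})}$, your logarithmic derivatives of $A$ and $B$ match the paper's, your Taylor check at $t=1^-$ is right, and you correctly target $(1-t)\Delta$ rather than $\Delta$ in case (ii) (the paper shows $\partial_t\Delta(p,t)\big|_{t=0}<0$ for $p\gtrsim 6.236$, so $\Delta$ itself is not monotone there). But there is a genuine gap, and it is exactly where you declare the work easy. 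For $p>2$ one has $A<B$ (Theorem \ref{conj:location}(iii)), so the target $A'(t)\ge B'(t)$ does \emph{not} follow from any term-by-term comparison of $A'/A$ with $B'/B$: you must control the ratio $A/B$ quantitatively, and since $A$ carries the fractional power $1/(p-1)$ the inequality never "reduces to an algebraic statement" settled by Lemmas \ref{lem:positive} and \ref{lem:hx}. The paper's proof of (ii) takes logs of the two positive terms of $\bigl((1-t)\Delta\bigr)'$, forms $\Omega(t)$ with $\Omega(1^-)=0$, and shows $\Omega'<0$; that step hinges on the nontrivial claim
$0<\tfrac{p[(1-t^{p-1})^2-(p-1)^2t^{p-2}(1-t)^2]}{(p-1)(1-t^p)}<\tfrac{(p-2)(1-t^p)-p(t-t^{p-1})}{t}$,
proved by a cascade through $\Phi,\Phi',\Phi'',\Phi_1,\Phi_1',\Phi_1'',\Phi_1'''$ and an auxiliary analysis in $p$ of $\Phi_2$. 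None of this is sketched or replaced in your proposal, so case (ii) is essentially unproved.

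Case (i) has the analogous problem: your plan to put $(1-t)(A'-B')+(A-B)$ over a common denominator and "factor its numerator as a product of sign-definite pieces" cannot work literally, because the transcendental factor $A(t)$ survives any algebraic regrouping. The paper instead writes $(1-t)^2\partial_t\Delta=\tfrac{A}{B}-\bigl(\tfrac{(p-1)^2t^{p-2}(1-t)^2}{p(1-t^{p-1})^2}+\tfrac{p-1}{p}\bigr)$, compares the two terms through the log-difference $\chi(t)$ with $\chi(1^-)=0$, and proving $\chi'>0$ again requires a hard intermediate inequality,
$0>\tfrac{(1-t^{p-1})^2-(p-1)^2t^{p-2}(1-t)^2}{(p-1)(1-t)}>\tfrac{(p-2)(1-t^p)-p(t-t^{p-1})}{2t}$,
established via $\Theta$ and three levels of derivatives (plus $\Theta_1,\Theta_2$). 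So while your structural observations (which quantity to differentiate, the boundary behavior at $t=1^-$, the sign dichotomy at $p=2$, and deducing the stated bounds from monotonicity plus the endpoint limit) are sound, the two auxiliary inequalities that constitute the real content of the theorem are missing, and the mechanisms you propose in their place (termwise comparison in (ii), algebraic factorization in (i)) would not deliver them.
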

\begin{proof}
  (i) We will demonstrate that the derivative of $\Delta(p,t)$ is negative when $1<p<2$.
\begin{align*}
  \frac{\partial \Delta(p,t)}{\partial t} &= \frac{1}{(1-t)^2}\left[\frac{\overline{\xi}_1}{\underline{\xi}_1} - \left(\frac{(p-1)^2t^{p-2}(1-t)^2}{p(1-t^{p-1})^2}+\frac{p-1}{p}\right)\right].
\end{align*}
Let $\chi(t):= \log\left(\frac{\overline{\xi}_1}{\underline{\xi}_1}\right) - \log\left(\frac{(p-1)^2t^{p-2}(1-t)^2}{p(1-t^{p-1})^2}+\frac{p-1}{p}\right)$. Then
\begin{align*}
  \frac{\partial \chi(t)}{\partial t}&=\frac{(1-t^{p-1})^2-(p-1)^2t^{p-2}(1-t)^2}{(p-1)(1-t)(1-t^{p-1})(1-t^p)} - \frac{(p-1)t^{p-3}(1-t)[(p-2)(1-t^p)-p(t-t^{p-1})]}{(1-t^{p-1})[(1-t^{p-1})^2+(p-1)t^{p-2}(1-t)^2]}.
\end{align*}
We claim that
$$
0>\frac{(1-t^{p-1})^2-(p-1)^2t^{p-2}(1-t)^2}{(p-1)(1-t)}>\frac{(p-2)(1-t^p)-p(t-t^{p-1})}{2t}.
$$
Then
\begin{align*}
  \frac{\partial \chi(t)}{\partial t}&>\frac{(p-2)(1-t^p)-p(t-t^{p-1})}{t(1-t^{p-1})(1-t^p)}\left(\frac12 - \frac{(p-1)t^{p-2}(1-t)(1-t^p)}{(1-t^{p-1})^2+(p-1)t^{p-2}(1-t)^2}\right)\\
  &=\frac{(p-2)(1-t^p)-p(t-t^{p-1})}{t(1-t^{p-1})(1-t^p)}\left(\frac12 - \frac{(p-1)t^{p-2}(1-t)(1-t^p)}{(1-t^{p-1})^2+(p-1)t^{p-2}(1-t)^2}\right).
\end{align*}
Notice that $(p-2)(1-t^p)-p(t-t^{p-1})<0$, and
\begin{align*}
  \frac{(p-1)t^{p-2}(1-t)(1-t^p)}{(1-t^{p-1})^2+(p-1)t^{p-2}(1-t)^2}&>\frac{(p-1)t^{p-2}(1-t)(1-t^p)}{(p-1)^2t^{p-2}(1-t)^2+(p-1)t^{p-2}(1-t)^2}\\
  &=\frac{1-t^p}{p(1-t)}>\frac{1}{p}>\frac12.
\end{align*}
Therefore, $\frac{\partial \chi(t)}{\partial t}>0$, and hence $\chi(t)<\lim_{t\rightarrow 1^-}\chi(t)=0$, i.e., $\frac{\partial \Delta(p,t)}{\partial t}<0$.

What remains is to prove the claim. By Lemma \ref{lem:hx}(i) and Lemma \ref{lem:deltax}(i) (See Appendix), we have that the two terms are both negative on $(0,1)$. Letting
$$
\Theta(t):=2t[(1-t^{p-1})^2-(p-1)^2t^{p-2}(1-t)^2] - (p-1)(1-t)[(p-2)(1-t^p)-p(t-t^{p-1})],
$$
we have
\begin{align*}
  \Theta'(t) &= 2(2p-1)t^{2p-2} -(p^2-1)(3p-4)t^p+2p(3(p-1)^2-2)t^{p-1}\\
  &~~~~ -(p-1)^2(3p-2)t^{p-2}-2p(p-1)t+(2p^2-4p+4).\\
  \Theta''(t) &= (p-1)[4(2p-1)t^{2p-3}-p(p+1)(3p-4)t^{p-1}+2p(3(p-1)^2-2)t^{p-2}\\
  &~~~~\phantom{(p-1)} -(p-1)(3p-2)(p-2)t^{p-3}-2p].\\
  \Theta'''(t) &= (p-1)t^{p-4}[4(2p-1)(2p-3)t^p-p(p+1)(3p-4)(p-1)t^2\\
  &~~~~\phantom{(p-1)t^{p-4}} +2p(3(p-1)^2-2)(p-2)t-(p-1)(3p-2)(p-2)(p-3)]\\
  &=(p-1)t^{p-4}\Big[2(p-1)^2[6t^p-p(p+1)t^2+2p(p-2)t-(p-2)(p-3)]\\
  &~~~~\phantom{(p-1)t^{p-4}} +p(p-2)[4t^p-(p^2-1)t^2+2(p^2-2p-1)t-(p-3)(p-1)]\Big].
\end{align*}
Let $\Theta_1(t):=6t^p-p(p+1)t^2+2p(p-2)t-(p-2)(p-3)$, $\Theta_2(t):=4t^p-(p^2-1)t^2+2(p^2-2p-1)t-(p-3)(p-1)$. We first show that $t^p-1-p(t-1)\le (p-1)(1-t)^2$. This follows from the fact that
$$
\frac{d}{dt}\left(\frac{t^p-1-p(t-1)}{(1-t)^2}\right) = \frac{(p-2)(1-t^p)-p(t-t^{p-1})}{(1-t)^3}<0 \quad\text{(Lemma~\ref{lem:hx}(i), See Appendix)}.
$$
Then we have
\begin{align*}
  \Theta_1(t) &=6(t^p-1-p(t-1))-p(p+1)(1-t)^2\\
  &\le 6(p-1)(1-t)^2-p(p+1)(1-t)^2\\
  & = -(p-2)(p-3)(1-t)^2<0.\\
  \Theta_2'(t) &= 4pt^{p-1} -2(p^2-1)t + 2(p^2-2p-1).\\
  \Theta_2''(t) &= 2(p-1)t^{p-2}(2p - (p+1)t^{2-p})>0.
\end{align*}
Thus $\Theta_2'(t)<\Theta_2'(1) = 0$, which implies $\Theta_2(t)$ is decreasing and $\Theta_2(t)>\Theta_2(1)=0$. Because $\Theta_1(t)<0$ and $\Theta_2(t)>0$, we have that $\Theta'''(t)<0$. Therefore, $\Theta''(t)>\Theta''(1)=0$, which implies that $\Theta'(t)$ is increasing. Thus $\Theta'(t)<\Theta'(1)=0$, which that implies $\Theta(t)$ is decreasing, i.e., $\Theta(t)>\Theta(1)=0$. Then the claim follows directly.

(ii) When $p>2$, notice that the derivative of $\Delta(p,t)$ at $t=0$ is
$$
\lim_{t\rightarrow 0^+} \frac{\partial \Delta(p,t)}{\partial t} = \frac{\left(\frac{1}{p}\right)^{\frac{1}{p-1}}}{\frac{p-1}{p}} -\frac{p-1}{p}.
$$
When $p>6.236$, the derivative would become negative. Therefore, we could not expect that $\Delta(p,t)$ is increasing when $p>6.236$.

Instead, we are going to show that the function $(1-t)\Delta(p,t)$ is increasing. Its derivative is
$$
\left(\frac{1-t^p}{p(1-t)}\right)^{\frac{1}{p-1}}\frac{(p-1)t^p+1-pt^{p-1}}{(p-1)(1-t^p)(1-t)}-\frac{(p-1)t^{p-2}(t^p+p-1-pt)}{p(1-t^{p-1})^2}.
$$
We are going to demonstrate that this derivative is positive. Let
\begin{align*}
\Omega(t)&:=\log\left(\left(\frac{1-t^p}{p(1-t)}\right)^{\frac{1}{p-1}}\frac{(p-1)t^p+1-pt^{p-1}}{(p-1)(1-t^p)(1-t)}\right) - \log\left(\frac{(p-1)t^{p-2}(t^p+p-1-pt)}{p(1-t^{p-1})^2}\right)\\
& = \log\left(\left(\frac{1-t^p}{p(1-t)}\right)^{\frac{1}{p-1}}\right) -\log\left(\frac{(p-1)(1-t^p)}{p(1-t^{p-1})}\right) - \log\left(\frac{(p-1)t^{p-2}(1-t)(t^p+p-1-pt)}{(1-t^{p-1})((p-1)t^p+1-pt^{p-1})}\right).
\end{align*}
\begin{align*}
  \Omega'(t)&=\frac{(1-t^{p-1})^2-(p-1)^2t^{p-2}(1-t)^2}{(p-1)(1-t)(1-t^{p-1})(1-t^p)} - \frac{(p-2)-(p-1)t+t^{p-1}}{t(1-t)(1-t^{p-1})} \\
  &~~~~ + \frac{p[(1-t^{p-1})^2-(p-1)^2t^{p-2}(1-t)^2]}{((p-1)t^p+1-pt^{p-1})(t^p+p-1-pt)}\\
  &=\frac{(1-t^{p-1})^2-(p-1)^2t^{p-2}(1-t)^2}{(p-1)(1-t)(1-t^{p-1})(1-t^p)} - \frac{t^p+(p-1)-pt}{t(1-t)((p-1)t^p+1-pt^{p-1})} \\
  &~~~~ + \frac{(p-1)((p-1)t^p+1-pt^{p-1})}{t(1-t^{p-1})(t^p+(p-1)-pt)}\\
  &=\frac{p[(1-t^{p-1})^2-(p-1)^2t^{p-2}(1-t)^2]}{(p-1)(1-t)(1-t^p)((p-1)t^p+1-pt^{p-1})} \\
  &~~~~ - \frac{(p-1)((p-2)(1-t^p)-p(t-t^{p-1}))}{t(1-t^{p-1})(t^p+(p-1)-pt)}.
\end{align*}
We claim that
$$
0<\frac{p[(1-t^{p-1})^2-(p-1)^2t^{p-2}(1-t)^2]}{(p-1)(1-t^p)}<\frac{(p-2)(1-t^p)-p(t-t^{p-1})}{t}.
$$
Then
\begin{align*}
  \Omega'(t)&<\frac{(p-2)(1-t^p)-p(t-t^{p-1})}{t(1-t^{p-1})(1-t)}\left(\frac{1-t^{p-1}}{(p-1)t^p+1-pt^{p-1}} - \frac{(p-1)(1-t)}{t^p+(p-1)-pt}\right)\\
  &=\frac{(p-2)(1-t^p)-p(t-t^{p-1})}{t(1-t^{p-1})(1-t)}\left(\frac{-t[(1-t^{p-1})^2-(p-1)^2t^{p-2}(1-t)^2]}{((p-1)t^p+1-pt^{p-1})(t^p+(p-1)-pt)}\right)<0.
\end{align*}
Therefore, $\Omega(t)>\lim_{t\rightarrow 1^-}\Omega(t)=0$, i.e., the derivative of $(1-t)\Delta(p,t)$ is positive.

We only need to prove the claim. Letting
$$
\Phi(t):=pt[(1-t^{p-1})^2-(p-1)^2t^{p-2}(1-t)^2] - (p-1)(1-t^p)[(p-2)(1-t^p)-p(t-t^{p-1})],
$$
we have
\begin{align*}
  \Phi'(t) &= p[-2(p-1)(p-2)t^{2p-1} + p(2p-1)t^{2p-2}-p(p^2-1)t^p \\
  &~~~~ + 2(p-2)(p^2+p-1)t^{p-1} -p(p-1)^2t^{p-2}+p].\\
  \Phi''(t) &= p(p-1)t^{p-3}[-2(p-2)(2p-1)t^{p+1} +2p(2p-1)t^p -p^2(p+1)t^2\\
  &~~~~\phantom{p(p-1)t^{p-3}} +2(p-2)(p^2+p-1)t - p(p-1)(p-2)].
\end{align*}
Let $\Phi_1(t):=\frac{\Phi''(t)}{p(p-1)t^{p-3}} = 2(p-2)(2p-1)t^{p+1} +2p(2p-1)t^p -p^2(p+1)t^2 +2(p-2)(p^2+p-1)t - p(p-1)(p-2)$.
Then \begin{align*}
  \Phi_1'(t) &= -2(p-2)(2p-1)(p+1)t^p + 2p^2(2p-1)t^{p-1} -2p^2(p+1)t +2(p-2)(p^2+p-1);\\
  \Phi_1''(t) &= -2(p-2)(2p-1)(p+1)p t^{p-1} +2p^2(2p-1)(p-1)t^{p-2} -2p^2(p+1);\\
  \Phi_1'''(t) &= -2p(p-2)(2p-1)(p-1)t^{p-3}[(p+1)t - p].
\end{align*}
Therefore, we have that $\Phi_1''(t)$ is increasing on $(0,\frac{p}{p+1})$ and decreasing on $(\frac{p}{p+1},1)$. We have $\Phi_1''(t)\le\Phi_1''(\frac{p}{p+1})=2p^2(2p-1)\left(\frac{p}{p+1}\right)^{p-2}-2p^2(p+1)$. Letting $\Phi_2(p):=(p-2)\log\left(\frac{p}{p+1}\right) -\log\left(\frac{p+1}{2p-1}\right)$, we have
\begin{align*}
  \Phi_2'(p) &=\frac{p-2}{p} +\log(p) -\frac{p-1}{p+1} -\log(p+1) +\frac{2}{2p-1};\\
  \Phi_2''(p) &=\frac{(p-2)(8p^2+p-1)}{p^2(p+1)^2(2p-1)^2}.
\end{align*}
Therefore $\Phi_2'(p)$ is increasing on $(2,\infty)$. Along with $\lim_{p\rightarrow \infty}\Phi_2'(p)=0$, we have that $\Phi_2'(p)<0$ on $(2,\infty)$, which implies that $\Phi_2(p)<\Phi_2(2)=0$. Thus $\Phi_1''(t)<0$. Then we have that $\Phi_1'(t)$ is decreasing on $(0,1)$, which implies that $\Phi_1'(t)>\Phi_1'(1) =0$. Therefore, we have $\Phi_1(t)<\Phi_1(1)=0$, i.e., $\Phi''(t)<0$. Then we conclude that $\Phi'(t)$ is decreasing on $(0,1)$, which implies that $\Phi'(t)>\Phi(1)=0$.
Therefore,  $\Phi(t)$ is increasing on $(0,1)$ and $\Phi(t)<\Phi(1)=0$, which proves the claim.
\end{proof}

Because of Theorem \ref{prop:deltamono},
we can focus on the special case $\ell=0$. So we define
$$
\Delta(p):= \Delta(p,0)= \left(\frac{1}{p}\right)^{\frac{1}{p-1}} -\frac{p-1}{p}.
$$

\begin{figure}[H]
  \centering
  \includegraphics[scale = 0.3]{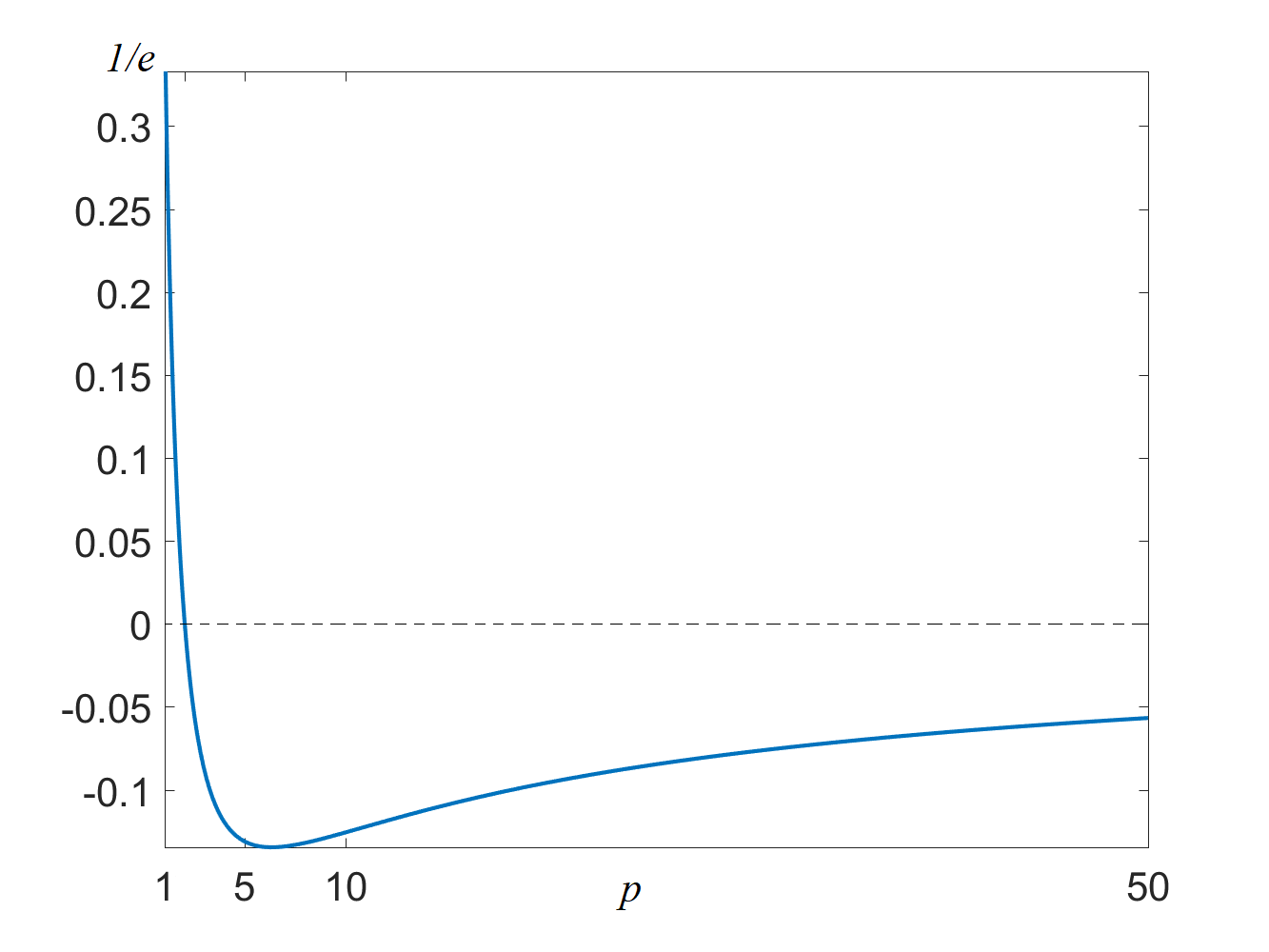}
  \caption{$\Delta(p)$}\label{fig:ub_lb_diff}
\end{figure}
\noindent
From Figure \ref{fig:ub_lb_diff}, 
we can see the behavior of $\Delta(p)$,
which is summarized in the following result.

\begin{proposition}\label{prop:deltabehavior0}
$\Delta(p)$ $(p>1)$ satisfies the following properties:
\begin{itemize}
  \item[(i)] $\Delta(p)>0$ when $1<p<2$; $\Delta(2)=0$; and $\Delta(p)<0$ when $p>2$;
  \item[(ii)] $\lim\limits_{p\rightarrow 1}\Delta(p) = e^{-1}$; $\lim\limits_{p\rightarrow \infty}\Delta(p) = 0$;
  \item[(iii)] $\Delta(p)$ is minimized at $p_0$, where $p_0\approx 6.3212$;
  \item[(iv)] $0.3679\approx e^{-1}\geq \Delta(p)\geq \Delta(p_0)\approx -0.1347$.
\end{itemize}
\end{proposition}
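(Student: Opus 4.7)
My plan is to verify the four parts in sequence, each reducing to elementary one-variable calculus applied to $\Delta(p) = (1/p)^{1/(p-1)} - (p-1)/p$.

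For part (i), both $(1/p)^{1/(p-1)}$ and $(p-1)/p$ are positive on $p>1$, so after clearing fractions and taking logarithms the inequality $\Delta(p) \gtrless 0$ is equivalent to $\psi(p) := (p-2)\ln p - (p-1)\ln(p-1) \gtrless 0$. A direct computation gives
\[
\psi''(p) = \frac{p-2}{p^2(p-1)},
\]
which is negative on $(1,2)$ and positive on $(2,\infty)$. Hence $\psi'$ is strictly unimodal, with minimum value $\psi'(2) = \ln 2 - 1 < 0$ and with $\psi'(1^+) = +\infty$ and $\psi'(\infty) = 0^-$. Combined with the boundary values $\psi(2) = 0$ and $\psi(1^+) = 0$ (the latter because $(p-1)\ln(p-1)\to 0$), the resulting sign pattern of $\psi'$ forces $\psi > 0$ on $(1,2)$ and $\psi < 0$ on $(2,\infty)$, giving (i).

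Part (ii) is standard asymptotics: writing $p = 1+\varepsilon$, we have $(1+\varepsilon)^{-1/\varepsilon}\to e^{-1}$ and $(p-1)/p\to 0$ as $\varepsilon\to 0^+$; and $p^{-1/(p-1)} = e^{-\ln p/(p-1)}\to 1$ while $(p-1)/p\to 1$ as $p\to\infty$.

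For (iii), extending $\Delta$ continuously to $[2,\infty]$ via $\Delta(\infty):=0$, parts (i) and (ii) produce a continuous function vanishing at both endpoints but strictly negative on the interior, so $\Delta$ attains a global minimum at some interior $p_0 \in (2,\infty)$ with $\Delta'(p_0) = 0$. The main obstacle is proving $p_0$ is \emph{unique}: the plan is to write
\[
\Delta'(p) = p^{-1/(p-1)}\cdot\frac{\ln p + 1/p - 1}{(p-1)^2} - \frac{1}{p^2}
\]
and show $\Delta'$ has exactly one zero on $(1,\infty)$, for example by verifying directly that $\Delta' < 0$ throughout $(1,2]$ (using derivative-sign analyses analogous to those for $\psi$) and then showing that on $(2,\infty)$ any stationary point satisfies $\Delta'' > 0$, which together with the boundary values $\Delta(2) = \Delta(\infty) = 0$ rules out multiple critical points. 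The numerical values $p_0 \approx 6.3212$ and $\Delta(p_0) \approx -0.1347$ are then obtained by Newton's method applied to $\Delta'$. Finally, (iv) follows immediately once the unimodality in (iii) is established: $\sup_{p>1}\Delta(p) = \lim_{p\to 1^+}\Delta(p) = e^{-1}$ (approached but not attained) and $\inf_{p>1}\Delta(p) = \Delta(p_0)$, with the stated numerical approximations imported from (ii) and (iii).
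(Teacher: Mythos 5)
Your parts (i) and (ii) are correct. For (i) you in fact give a more self-contained argument than the paper, which obtains the sign of $\Delta(p)=\Delta(p,0)$ directly from the ordering of the bounds $\underline{\xi}_1$ and $\overline{\xi}_1$ in Theorem \ref{conj:location}; your reduction of $\Delta(p)\gtrless 0$ to $\psi(p):=(p-2)\ln p-(p-1)\ln(p-1)\gtrless 0$, with $\psi''(p)=\frac{p-2}{p^2(p-1)}$, checks out, and the limit computations in (ii) match the paper's.

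The genuine gap is in (iii), and it propagates to (iv). What you offer there is a plan, not a proof: the claims that $\Delta'<0$ throughout $(1,2]$ and that every stationary point in $(2,\infty)$ has $\Delta''>0$ are exactly the nontrivial analytic content of the proposition, and you only assert that they can be verified ``by derivative-sign analyses analogous to those for $\psi$'' without carrying out any computation; the soft part you do prove (existence of an interior minimizer of $\Delta$ on $(2,\infty)$ from the boundary values) is not where the difficulty lies. Both claims are true, but establishing them is the real work. The paper does it in one stroke: writing
\[
\Delta'(p)=\left(\tfrac{1}{p}\right)^{\frac{1}{p-1}}\frac{1}{p^2}\left[-\frac{p}{p-1}+\frac{p^2\log p}{(p-1)^2}-p^{\frac{1}{p-1}}\right],
\]
it shows that $p^{2+\frac{1}{p-1}}\Delta'(p)$ is increasing on $(1,\infty)$, which reduces to the elementary inequalities $p^2-1-2p\log p>0$ and $p-1-p\log p<0$ for $p>1$. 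This single monotonicity statement yields the full sign pattern $\Delta'<0$ on $(1,p_0)$ and $\Delta'>0$ on $(p_0,\infty)$, hence uniqueness and global optimality of $p_0$, your desired $\Delta''(p_0)>0$ at the stationary point, and also the bound $\Delta(p)\le e^{-1}$ needed for (iv) --- note that this bound does not follow from (i) and (ii) alone, since without monotonicity near $p=1$ the function could overshoot its limit $e^{-1}$. Until you supply an argument of this strength (your proposed route through ``$\Delta''>0$ at stationary points'' would require its own nontrivial computation, which you do not indicate how to perform), parts (iii) and (iv) remain unproved; the numerical values $p_0\approx 6.3212$ and $\Delta(p_0)\approx-0.1347$ are fine once uniqueness is in hand.
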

\begin{proof}
  (i) follows from Theorem \ref{conj:location}. For (ii),
  $$
  \lim\limits_{p\rightarrow 1}\Delta(p) = \lim\limits_{p\rightarrow 1}\exp\left\{-\frac{\log p}{p-1}\right\}=\exp\left\{-1\right\}; \lim\limits_{p\rightarrow \infty}\Delta(p) = \lim\limits_{p\rightarrow \infty}\exp\left\{-\frac{\log p}{p-1}\right\}-1 = 0.
  $$
  For (iii), we have
  \begin{align*}
  \Delta'(p) &= \left(\frac{1}{p}\right)^{\frac{1}{p-1}}\left[-\frac{1}{p(p-1)}+\frac{\log p}{(p-1)^2}\right] -\frac{1}{p^2}\\
  &=\left(\frac{1}{p}\right)^{\frac{1}{p-1}}\frac{1}{p^2}\left[-\frac{p}{p-1}+\frac{p^2\log p}{(p-1)^2}-p^{\frac{1}{p-1}}\right].
  \end{align*}
  Notice that
  \begin{align*}
  \frac{d}{dp}\left(p^{2+\frac{1}{p-1}}\Delta'(p)\right)&=\frac{p^2-1-2p\log p}{(p-1)^3} - p^{\frac{1}{p-1}}\frac{(p-1)-p\log p}{p(p-1)^2} ~>~ 0.
  \end{align*}
This follows from $p^2-1-2p\log p>0$ and $(p-1)-p\log p>0$ for $p>1$. Therefore, $p^{2+\frac{1}{p-1}}\Delta'(p)$ is increasing on $(1,\infty)$. There exists unique $p_0>1$ satisfying
$$
-\frac{p_0}{p_0-1}+\frac{p_0^2\log p_0}{(p_0-1)^2}-p_0^{\frac{1}{p_0-1}}=0,
$$
and $\Delta'(p)<0$ for  $1<p<p_0$, $\Delta'(p)>0$ for $p>p_0$, which implies that $\Delta(p_0)=\min_{p>1}\Delta(p)$. (iv) follows directly.
\end{proof}

\section{Lighter relaxations}\label{sec:lighter}

As we mentioned at the outset, an alternative key relaxation previously studied
requires that the domain of $f$ is all of $[0,u]$,
$f$ is convex on $[0,u]$, $f(0)=0$,
and $f$ is  increasing on $[0,u]$.
Assuming these properties, we recall the definition of the \emph{na\"{\i}ve relaxation}
\begin{align*}
&\invbreve{S}^0_f(\ell,u):= 
\left\{ (x,y,z) \in \mathbb{R}^3 ~:~
\left(f(\ell)-  {\scriptstyle \frac{f(u)-f(\ell)}{u-\ell}} \ell\right)z
  + {\scriptstyle \frac{f(u)-f(\ell)}{u-\ell}} x
\geq y \geq f(x),~ \right.\\
&\left. uz\geq x \geq  \ell z,~   1\geq z \geq 0
\vphantom{\scriptstyle\frac{f(u)-f(\ell)}{u-\ell}}
\right\}.
\end{align*}
For example,
convex power functions $f(x):=x^p$ on $[\ell,u]$, $\ell\geq 0$,  with $p>1$ have the required properties.
We wish to discuss a few different ways to handle functions $f$ with these properties.

\begin{itemize}[labelwidth=0pt, itemindent=!,labelindent=0pt]
\item[$\cdot$] \underline{N}a\"{i}ve \underline{R}elaxation [NR]: $\invbreve{S}^0_f(\ell,u)$
\item[$\cdot$] \underline{P}erspective \underline{R}elaxation [PR]: $\invbreve{S}^*_f(\ell,u)$
\item[$\cdot$] \underline{P}iecewise-\underline{L}inear under-est. $+$ \underline{P}erspective \underline{R}elaxation [PL+PR]: $\invbreve{U}^*_f(\bm\xi):=\invbreve{S}^*_g(\ell,u)$
\item[$\cdot$] linearly \underline{E}xtend to 0 $+$ \underline{N}a\"{i}ve \underline{R}elaxation [E+NR]: $\invbreve{S}^0_{\bar{f}}(\ell,u)$
\item[$\cdot$] \underline{P}iecewise-\underline{L}inear under-est. $+$ linearly \underline{E}xtend to 0 $+$ \underline{N}a\"{i}ve \underline{R}elaxation  [PL+E+NR]: $\invbreve{U}^0_{\bar{f}}(\bm\xi):=
    \invbreve{S}^0_{\bar{g}}(\ell,u)$
\end{itemize}

One of the main focuses of \cite{Perspec2019} was comparing NR and PR, with the idea
that PR is tighter than NR, but PR is more burdensome computationally.
So far in this work, we have extensively investigated PL+PR, again with the motivation that
 PL+PR is less burdensome than PR. Because piecewise-linearization
 requires choosing linearization points, we have put a big emphasis on
 how to do that. When $\ell>0$, a simple way to do something stronger than
 NR is with E+NR: linearly interpolate on $[0,\ell]$ before applying the
na\"{i}ve relaxation --- the strict convexity of the power function makes
this stronger than NR. Finally, again when $\ell>0$, we can consider PL+E+NR:
applying piecewise-linearization on $[\ell,u]$, linearly interpolating on $[0,\ell]$,
and then applying the na\"{i}ve relaxation.

In what follows, we focus on power functions, but the ideas could also be applied to other functions
having the required properties.

%

\subsection{PL+E+NR}\label{sec:lighter:PL+E+NR}
Defining the piecewise-linear $g$ with respect to $f$ having domain $[\ell,u]$,  we
can  extend $g$ to the function $\bar{g}$, with domain all of $[0,u]$:
 \[
 \bar{g}(x):=\left\{
         \begin{array}{ll}
           \frac{f(\ell)}{\ell} x, & x\in[0,\ell); \\
           g(x), & x\in[\ell,u].
         \end{array}
       \right.
 \]
In this way, $\bar{g}$ is a piecewise-linear increasing function on all of $[0,u]$, and is convex on $[0,u]$ as long as $f'(\ell) \geq \frac{f(\ell)}{\ell}$.
In fact, $\bar{g}$ is an under-estimator of the function that is $f$ on $[\ell,u]$ and 0 at 0.  Next
we calculate the volume of the na\"{i}ve relaxation of the piecewise-linear under-estimator $\invbreve{U}^0_{\bar{f}}(\bm\xi):=\invbreve{S}^0_{\bar{g}}(\ell,u)$, by applying \cite[Thm. 10]{Perspec2019} to $\bar{g}$.


\begin{proposition}\label{prop:naiveconvex}  Suppose that $f$ is convex and increasing on $[\ell,u]$ with $f'(\ell) \geq \frac{f(\ell)}{\ell}$.
For $\bm\xi=(\ell,\xi_1,\dots,\xi_{n-1},u)$, where $f$ is differentiable at each coordinate of $\bm\xi$, we can compute $\invbreve{U}^0_{\bar{f}}(\bm\xi)$
in $\mathcal{O}(n)$ time.
\end{proposition}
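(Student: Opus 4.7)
The plan is to verify that $\bar{g}$ meets the hypotheses of \cite[Thm.~10]{Perspec2019} and then to invoke that theorem. Those hypotheses are: domain $[0,u]$, value $0$ at $0$, and $\bar{g}$ convex and increasing on $[0,u]$. By construction $\bar{g}(0)=0$; on $[0,\ell]$, $\bar{g}$ is the line of positive slope $f(\ell)/\ell$; and $g=\bar{g}|_{[\ell,u]}$ is piecewise linear with each piece coinciding with a tangent line $T_i$ at $\xi_i$, so the slopes are $f'(\xi_0)\leq f'(\xi_1)\leq\cdots\leq f'(\xi_n)$, all nonnegative because $f$ is convex and increasing. Continuity at $\ell$ follows because $T_0$ passes through $(\ell,f(\ell))$. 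The only nontrivial check is convexity at the knot $x=\ell$, which amounts to comparing the left-slope $f(\ell)/\ell$ to the right-slope $f'(\ell)$; the standing hypothesis $f'(\ell)\geq f(\ell)/\ell$ supplies exactly this.

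With the hypotheses in place, I would apply \cite[Thm.~10]{Perspec2019} to $\bar{g}$, obtaining a formula for $\vol(\invbreve{S}^0_{\bar{g}}(\ell,u))$. Because $\bar{g}$ is piecewise linear with $n+2$ breakpoints $0<\tau_0=\ell<\tau_1<\cdots<\tau_n<\tau_{n+1}=u$, evaluating that formula decomposes into a sum of $\mathcal{O}(n)$ contributions, one per linear piece of $\bar{g}$. On each piece $\bar{g}$ is affine, so, in the spirit of Corollary \ref{cor:lineartime}, each contribution reduces to a closed-form calculation computable in $\mathcal{O}(1)$ arithmetic operations, given the values $\bar{g}(\tau_i)$ (available from oracle access to $f$ and $f'$). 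Summing yields the total volume in $\mathcal{O}(n)$ time.

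The main bookkeeping step, which I view as routine rather than a genuine obstacle, is identifying for each piece of $\bar{g}$ the sub-region of $(x,z)$-space in which that piece is active, since the feasible range $\ell z\leq x\leq uz$ shifts with $z$ and does not split cleanly along vertical lines $x=\tau_i$. Nevertheless, each such sub-region is a polygon described by a constant number of linear inequalities, so all per-piece closed-form contributions are $\mathcal{O}(1)$ and the overall $\mathcal{O}(n)$ bound is preserved.
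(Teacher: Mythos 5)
Your proposal is correct and follows essentially the same route as the paper: verify that $\bar g$ is increasing, convex (the only nontrivial knot being $x=\ell$, where the hypothesis $f'(\ell)\ge f(\ell)/\ell$ is exactly what is needed), and vanishes at $0$, then apply \cite[Thm.~10]{Perspec2019} to $\bar g$ and evaluate the resulting formula piece by piece. The paper avoids your $(x,z)$-region bookkeeping altogether, because the cited formula is a one-dimensional integral of $g^{-1}(y)-g^{-1}(y)^2/(2u)$ plus boundary terms, so it suffices to split that integral at the values $g(\tau_i)$ and substitute $w=g^{-1}(y)$ on each affine piece, yielding the closed-form contribution $\bigl(\tfrac{\tau_{i+1}^2-\tau_i^2}{2}-\tfrac{\tau_{i+1}^3-\tau_i^3}{6u}\bigr)f'(\xi_i)$ and hence the $\mathcal{O}(n)$ bound.
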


\begin{proof}
We define the $\tau_i$ and $g$ from $f$,$\ell$,$u$  as usual.
For $x\in [\ell,u]$, we have
\[
\bar{g}(x)=g(x) = g(\tau_i) + \frac{g(\tau_{i+1})-g(\tau_{i})}{\tau_{i+1}-\tau_i}(x-\tau_i), ~\forall~x\in [\tau_i,\tau_{i+1}], ~i=0,1,\dots,n.
\]
Applying \cite[Thm. 10]{Perspec2019} to $\bar{g}$, we have
\begin{align*}
\invbreve{S}^0_{\bar{g}}(\ell,u) &= \int_{g(\ell)}^{g(u)} \left(g^{-1}(y)-\frac{g^{-1}(y)^2}{2u} \right) dy  \\
& \qquad - \frac{\ell}{2}(g(u)-g(\ell)) - \frac{u-\ell}{6u}(u g(u) - \ell g(\ell)) - \frac{u-\ell}{6}(g(u) - g(\ell)) \\
&= \sum_{i=0}^n \int_{g(\tau_i)}^{g(\tau_{i+1})} \left(g^{-1}(y)-\frac{g^{-1}(y)^2}{2u} \right) dy \\
& \qquad - \frac{\ell}{2}(f(u)-f(\ell)) - \frac{u-\ell}{6u}(u f(u) - \ell f(\ell)) - \frac{u-\ell}{6}(f(u) - f(\ell)) \\
&= \sum_{i=0}^n \int_{\tau_i}^{\tau_{i+1}} \left(w-\frac{w^2}{2u} \right) \frac{g(\tau_{i+1})-g(\tau_{i})}{\tau_{i+1}-\tau_i} dw \\
& \qquad - \frac{u+2\ell}{6}(f(u)-f(\ell)) - \frac{u-\ell}{6u}(u f(u) - \ell f(\ell)) \\
&= \sum_{i=0}^n \left( \frac{\tau_{i+1}^2-\tau_i^2}{2} - \frac{\tau_{i+1}^3-\tau_i^3}{6u} \right) f'(\xi_i) \\
& \qquad - \frac{u+2\ell}{6}(f(u)-f(\ell)) - \frac{u-\ell}{6u}(u f(u) - \ell f(\ell))
\end{align*}
The result follows.
\end{proof}

Next, we consider the case of convex power functions $f(x):=x^p$ on $[\ell,u]$, with $p>1$.
To emphasize that the calculations are for power functions with exponent $p$ (>1),
we will write $\invbreve{U}^0_{\bar{p}}(\bm\xi)$ rather than $\invbreve{U}^0_{\bar{f}}(\bm\xi)$.


\begin{corollary}\label{cor:naivepoly}
For $\bm\xi=(\ell,\xi_1,\dots,\xi_{n-1},u)$, we can compute $\invbreve{U}^0_{\bar{p}}(\bm\xi)$
in $\mathcal{O}(n)$ time.
\end{corollary}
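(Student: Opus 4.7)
The plan is to obtain this as an immediate specialization of Proposition \ref{prop:naiveconvex}. That proposition already gives an $\mathcal{O}(n)$-time formula for $\vol(\invbreve{U}^0_{\bar f}(\bm\xi))$ for any convex increasing $f$ on $[\ell,u]$ satisfying $f'(\ell)\geq f(\ell)/\ell$ and differentiable at every $\xi_i$. So the work reduces to checking these three hypotheses for $f(x):=x^p$ with $p>1$, and then observing that the resulting formula can be evaluated termwise in constant time per linearization point.

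First I would verify the hypotheses in the case $\ell>0$: convexity from $f''(x)=p(p-1)x^{p-2}>0$; monotonicity from $f'(x)=px^{p-1}>0$; the key inequality from $f'(\ell)=p\ell^{p-1}=p\cdot f(\ell)/\ell\geq f(\ell)/\ell$ since $p\geq 1$; and differentiability at every $\xi_i>0$ is immediate. The edge case $\ell=0$ needs a brief separate comment: here no extension to $[0,\ell]$ is needed, $\bar{g}=g$, and $\invbreve{U}^0_{\bar p}(\bm\xi)=\invbreve{S}^0_g(0,u)$, so we may simply invoke \cite[Thm. 10]{Perspec2019} directly on $g$.

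Next I would record that, with $f(x):=x^p$, the quantities entering the formula from Proposition \ref{prop:naiveconvex} admit explicit closed forms: $f'(\xi_i)=p\xi_i^{p-1}$, the breakpoints $\tau_i=\frac{p-1}{p}\,\frac{\xi_i^p-\xi_{i-1}^p}{\xi_i^{p-1}-\xi_{i-1}^{p-1}}$ (already used in \S\ref{sec:xtothep:nonquadratics}), and $g(\tau_i)=f(\xi_{i-1})+f'(\xi_{i-1})(\tau_i-\xi_{i-1})$. Hence each term $\bigl(\tfrac{\tau_{i+1}^2-\tau_i^2}{2}-\tfrac{\tau_{i+1}^3-\tau_i^3}{6u}\bigr)f'(\xi_i)$ of the sum in the proof of Proposition \ref{prop:naiveconvex}, together with the two boundary correction terms, requires only $\mathcal{O}(1)$ arithmetic operations (treating evaluation of $x\mapsto x^p$ and $x\mapsto x^{p-1}$ as unit cost). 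Summing the $n+1$ such terms yields total running time $\mathcal{O}(n)$.

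There is no genuine obstacle here — the corollary is a routine specialization — so the only care needed is the $\ell=0$ boundary case and the explicit check that the hypothesis $f'(\ell)\geq f(\ell)/\ell$ holds (which, pleasantly, is the mild condition $p\geq 1$ for convex power functions).
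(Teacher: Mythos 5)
Your proposal is correct and matches the paper's intent: the corollary is stated without a separate proof precisely because it is the immediate specialization of Proposition \ref{prop:naiveconvex} to $f(x):=x^p$, and your verification of the hypotheses (convexity, monotonicity, $f'(\ell)=p\,f(\ell)/\ell\geq f(\ell)/\ell$, differentiability) plus the constant-work-per-term count is exactly the intended argument. The remark on $\ell=0$ is harmless extra care, though the paper's lighter-relaxation setting implicitly assumes $\ell>0$.
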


\noindent For quadratics and equally-spaced linearization points, we get a simple expression.


\begin{corollary}\label{cor:naive2equal}
For $p=2$, and the equally-spaced points $\xi_i=\ell + \frac{i}{n}(u-\ell)$, for $i=1,\dots,n-1$,
\[
\invbreve{U}^0_{\bar{2}}(\bm\xi)
=\frac{(u-\ell)^2(u^2+\ell^2)}{12u}+\frac{(u-\ell)^4}{24n^2u}.
\]
\end{corollary}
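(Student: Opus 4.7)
The plan is to specialize the closed-form expression derived in the proof of Proposition \ref{prop:naiveconvex} to the quadratic case $f(x) := x^2$ with the equally-spaced linearization points $\xi_i = \ell + ih$, where $h := (u-\ell)/n$. Since Corollary \ref{cor:naivepoly} already guarantees an explicit $\mathcal{O}(n)$ formula, the remaining task is purely algebraic: evaluate that formula in closed form.

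First, I would recall from the proof of Theorem \ref{thm:p2n} that for $p=2$ the intersection abscissae are $\tau_i = (\xi_{i-1}+\xi_i)/2$ for $i = 1, \ldots, n$, with boundary values $\tau_0 = \ell$ and $\tau_{n+1} = u$. For equally-spaced $\xi_i$, these become $\tau_i = \ell + (i - 1/2)h$ for $i = 1, \ldots, n$. With $f'(\xi_i) = 2\xi_i$, substituting into the formula from the proof of Proposition \ref{prop:naiveconvex} yields
\[
\vol(\invbreve{U}^0_{\bar{2}}(\bm\xi)) = 2\sum_{i=0}^n \left( \frac{\tau_{i+1}^2 - \tau_i^2}{2} - \frac{\tau_{i+1}^3 - \tau_i^3}{6u} \right) \xi_i - \frac{u+2\ell}{6}(u^2 - \ell^2) - \frac{u-\ell}{6u}(u^3 - \ell^3).
\]

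Next, I would reduce the inner sums by summation by parts: for $k = 2, 3$,
\[
\sum_{i=0}^n (\tau_{i+1}^k - \tau_i^k)\,\xi_i \;=\; u^{k+1} - \ell^{k+1} - h \sum_{i=1}^n \tau_i^k.
\]
Expanding $\tau_i^k = (\ell + (i-1/2)h)^k$ by the binomial theorem and invoking the standard closed forms for $\sum_{i=1}^n i^j$ ($j = 0,1,2,3$) reduces everything to polynomials in $\ell$, $u$, $h$, $n$. Using $h = (u-\ell)/n$ to eliminate $h$ and collecting terms, the odd powers of $1/n$ should all cancel, leaving only the constant term $\frac{(u-\ell)^2(u^2+\ell^2)}{12u}$ and a single $\frac{1}{n^2}$ correction $\frac{(u-\ell)^4}{24 n^2 u}$.

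The main obstacle is simply bookkeeping and sign management, since every single one of the many intermediate coefficients must cancel cleanly. To guard against arithmetic slips, I would check two limiting cases. First, as $n \to \infty$, the formula degenerates to $\frac{(u-\ell)^2(u^2+\ell^2)}{12u}$, which must agree with $\vol(\invbreve{S}^0_{\bar{2}}(\ell,u))$ computed from \cite[Thm.~10]{Perspec2019} applied directly to the function equal to $x^2$ on $[\ell,u]$ and extended linearly on $[0,\ell]$. Second, setting $\ell = 0$ collapses the formula to $\frac{u^3}{12} + \frac{u^3}{24 n^2}$; the leading term is consistent with $\vol(\invbreve{S}^0_2) = \frac{u^3}{18} + \frac{u^3}{36} = \frac{u^3}{12}$ from Corollary \ref{cor:S2} and the discussion in the proof of Corollary \ref{rem:compare}.
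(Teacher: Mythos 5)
Your proposal is correct and is essentially the paper's own proof: both specialize the formula from the proof of Proposition \ref{prop:naiveconvex} to $f(x)=x^2$ (so $f'(\xi_i)=2\xi_i$, $\tau_i=\tfrac{\xi_{i-1}+\xi_i}{2}$) and then reduce the sums algebraically, the only difference being bookkeeping — you substitute the equal spacing early and use Abel summation plus power-sum identities, while the paper first simplifies for general $\bm\xi$ into sums like $\sum_i \xi_i\xi_{i-1}(\xi_{i-1}-\xi_i)$ and substitutes at the end. (Minor remark: with midpoint abscissae no odd powers of $1/n$ ever appear, so there is nothing to cancel there; the computation indeed closes to the stated $\tfrac{(u-\ell)^2(u^2+\ell^2)}{12u}+\tfrac{(u-\ell)^4}{24n^2u}$.)
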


\begin{proof}
\begin{align*}
\vol(\invbreve{U}^0_{\bar{2}}(\bm\xi))
&= \sum_{i=0}^n\left(-\frac{1}{6u}(\tau_{i+1}^3-\tau_i^3)+\frac12(\tau_{i+1}^2-\tau_i^2)\right)2\xi_i\\
& \qquad - \frac{u+2\ell}{6}(u^2-\ell^2) - \frac{u-\ell}{6u}(u^3 - \ell^3) \\
&= \frac{3}{4}(u^3-\ell^3) + \frac14\sum_{i=1}^n\xi_i\xi_{i-1}(\xi_{i-1}-\xi_i) +\\
&\qquad-\frac{7}{24u}(u^4-\ell^4) -\frac{1}{12u}\sum_{i=1}^n\xi_{i-1}\xi_i(\xi_{i-1}^2-\xi_i^2)\\
& \qquad - \frac{u+2\ell}{6}(u^2-\ell^2) - \frac{u-\ell}{6u}(u^3 - \ell^3) \\
&=\frac{(u-\ell)^2(u^2+\ell^2)}{12u}+\frac{(u-\ell)^4}{24n^2u}.
\end{align*}
\end{proof}


\begin{remark}
Letting $n$ go to infinity in Corollary \ref{cor:naive2equal},
we obtain Corollary 11 of \cite{Perspec2019} with $p=2$.
\end{remark}

\subsection{E+NR}\label{sec:lighter:E+NR}
Continuing this idea, but without piecewise-linearization on its
domain  $[\ell,u]$, we can extend $f$ to the function $\bar{f}$, with domain $[0,u]$,
\[
 \bar{f}(x):=\left\{
         \begin{array}{ll}
           \frac{f(\ell)}{\ell} x, & x\in[0,\ell); \\
           f(x), & x\in[\ell,u].
         \end{array}
       \right.
 \]
 Applying the na\"{i}ve relaxation to  $\bar{f}$, we write
  $\invbreve{S}^0_{\bar{f}}(\ell,u)$.
It is clear that $\bar{g}$ (as defined above) is a lower bound on $\bar{f}$, so the na\"{i}ve relaxations associated with these functions are nested:
$\invbreve{S}^0_{\bar{f}}(\ell,u) \subset
\invbreve{U}_{\bar{f}}^0(\bm\xi):=
\invbreve{S}^0_{\bar{g}}(\ell,u)$.
We are naturally interested in how many linearization points
are sufficient to get $\vol(\invbreve{U}_{\bar{f}}^0(\bm\xi))$
to be close to $\invbreve{S}^0_{\bar{f}}(\ell,u)$.
We can give an answer to this in the case of the quadratic.
In what follows, we will write $\invbreve{U}_{\bar{2}}^0(\bm\xi)$ for
$\invbreve{U}_{\bar{f}}^0(\bm\xi)$, to emphasize the special case.

 %

\begin{proposition}\label{prop:volcomp1} For  equally-spaced points $\xi_i:=\ell + \frac{i}{n}(u-\ell)$, for $i=1,\dots,n-1$, if
\[
n > \frac{(u - \ell)^2}{\sqrt{24u\phi}},
\mbox{ then }
\vol(\invbreve{U}^0_{\bar{2}}(\bm\xi)\backslash\invbreve{S}_{\bar{2}}^0(\ell,u)) < \phi.
\]
\end{proposition}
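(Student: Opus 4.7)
The plan is to reduce this to a one-line arithmetic comparison by exploiting the two closed-form volume expressions that are already available. Since the excerpt records the nesting $\invbreve{S}^0_{\bar{f}}(\ell,u)\subset\invbreve{U}^0_{\bar{f}}(\bm\xi)$ (from $\bar{g}\le\bar{f}$), I can rewrite
\[
\vol\bigl(\invbreve{U}^0_{\bar{2}}(\bm\xi)\setminus\invbreve{S}^0_{\bar{2}}(\ell,u)\bigr)=\vol\bigl(\invbreve{U}^0_{\bar{2}}(\bm\xi)\bigr)-\vol\bigl(\invbreve{S}^0_{\bar{2}}(\ell,u)\bigr),
\]
and then compute the two volumes separately.

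For the outer volume, Corollary \ref{cor:naive2equal} gives the exact value
\[
\vol\bigl(\invbreve{U}^0_{\bar{2}}(\bm\xi)\bigr)=\frac{(u-\ell)^2(u^2+\ell^2)}{12u}+\frac{(u-\ell)^4}{24n^2u}.
\]
For the inner volume, I would invoke the Remark following Corollary \ref{cor:naive2equal}: letting $n\to\infty$ in the formula above recovers Corollary 11 of \cite{Perspec2019} at $p=2$, namely
\[
\vol\bigl(\invbreve{S}^0_{\bar{2}}(\ell,u)\bigr)=\frac{(u-\ell)^2(u^2+\ell^2)}{12u}.
\]
(If a standalone argument is preferred, this also drops out directly by applying Thm.~10 of \cite{Perspec2019} to $\bar{f}$ with $f(x):=x^2$, since $\bar{f}$ agrees with the linear extension on $[0,\ell]$ and with $x^2$ on $[\ell,u]$.)

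Subtracting the two expressions, the cancellation is immediate and the excess volume is exactly
\[
\vol\bigl(\invbreve{U}^0_{\bar{2}}(\bm\xi)\setminus\invbreve{S}^0_{\bar{2}}(\ell,u)\bigr)=\frac{(u-\ell)^4}{24n^2u}.
\]
Requiring this to be strictly less than $\phi$ is equivalent to $n^2>\frac{(u-\ell)^4}{24u\phi}$, i.e.\ $n>\frac{(u-\ell)^2}{\sqrt{24u\phi}}$, which is precisely the hypothesis.

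There is essentially no obstacle; the only thing to be careful about is justifying the passage to the inner volume. The cleanest route is to cite the Remark already placed just before the statement (which identifies the $n\to\infty$ limit of $\vol(\invbreve{U}^0_{\bar{2}}(\bm\xi))$ with the quoted formula from \cite{Perspec2019}), so that the difference collapses to the single ``tail'' term $\tfrac{(u-\ell)^4}{24n^2u}$ and the bound on $n$ is obtained by inverting a quadratic inequality.
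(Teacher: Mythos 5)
Your proposal is correct and follows essentially the same route as the paper: the paper likewise takes $\vol(\invbreve{S}_{\bar{2}}^0(\ell,u)) = \frac{(u-\ell)^2(u^2+\ell^2)}{12u}$ from Corollary 11 of \cite{Perspec2019} with $p=2$, subtracts it from the formula of Corollary \ref{cor:naive2equal} using the nesting $\invbreve{S}_{\bar{2}}^0(\ell,u)\subseteq\invbreve{U}_{\bar{2}}^0(\bm\xi)$ to get the excess $\frac{(u-\ell)^4}{24n^2u}$, and inverts the inequality. Your only cosmetic difference is routing the inner-volume formula through the Remark's $n\to\infty$ observation rather than citing \cite{Perspec2019} directly, which amounts to the same citation.
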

\begin{proof}
Applying Corollary 11 of \cite{Perspec2019} with $p=2$, we find that
\[
\vol(\invbreve{S}_{\bar{2}}^0(\ell,u)) = \frac{(u-\ell)^2(u^2+\ell^2)}{12u}.
\]
As noted above, $\invbreve{S}_{\bar{2}}^0(\ell,u) \subseteq \invbreve{U}_{\bar{2}}^0(\bm\xi)$, and by Corollary \ref{cor:naive2equal},
\[
\vol(\invbreve{U}^0_{\bar{2}}(\bm\xi)\backslash\invbreve{S}_{\bar{2}}^0(\ell,u)) =\vol(\invbreve{U}^0_{\bar{2}}(\bm\xi))-\vol(\invbreve{S}_{\bar{2}}^0(\ell,u)) = \frac{(u-\ell)^4}{24n^2u}.
\]
The lower bound on $n$ to obtain $\vol(\invbreve{U}^0_{\bar{2}}(\bm\xi)\backslash\invbreve{S}_{\bar{2}}^0(\ell,u)) < \phi$ follows easily.
\end{proof}

The result above found how many linearization points are sufficient to get
the na\"{i}ve volumes of E+NR and PL+E+NR close for quadratics. We can do the
same for the volumes of PR and PL+PR.  The perspective case is especially nice because we know that choosing
equally-spaced linearization points is optimal.

\begin{proposition}\label{prop:volcomp2} For  equally-spaced points $\xi_i:=\ell + \frac{i}{n}(u-\ell)$, for $i=1,\dots,n-1$, if
\[
n > \frac{1}{6}\sqrt{\frac{(u - \ell)^3}{\phi}},
\mbox{ then }
\vol(\invbreve{U}^*_{2}(\bm\xi)\backslash\invbreve{S}_{2}^*(\ell,u)) < \phi.
\]
\end{proposition}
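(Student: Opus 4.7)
The plan is to argue by direct computation, leveraging the exact volume formulas we already have for both relaxations. Since $g$ is a piecewise-linear under-estimator of $f(x):=x^2$, the constraint $y \ge zf(x/z)$ defining the perspective relaxation $\invbreve{S}^*_2(\ell,u)$ is at least as strong as the corresponding constraint $y \ge zg(x/z)$ in $\invbreve{U}^*_2(\bm\xi)$, while all other defining inequalities coincide. Consequently $\invbreve{S}^*_2(\ell,u) \subseteq \invbreve{U}^*_2(\bm\xi)$, so
\[
\vol(\invbreve{U}^*_2(\bm\xi)\setminus\invbreve{S}^*_2(\ell,u)) = \vol(\invbreve{U}^*_2(\bm\xi)) - \vol(\invbreve{S}^*_2(\ell,u)).
\]

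Next, I would substitute the closed-form volumes. By Theorem \ref{thm:p2n} applied at the equally-spaced points (which are the minimizers in the $p=2$ case), we have
\[
\vol(\invbreve{U}^*_2(\bm\xi)) = \frac{1}{18}(u-\ell)^3 + \frac{(u-\ell)^3}{36n^2},
\]
and by Corollary \ref{cor:S2}, $\vol(\invbreve{S}^*_2(\ell,u)) = \frac{1}{18}(u-\ell)^3$. Subtracting gives
\[
\vol(\invbreve{U}^*_2(\bm\xi)\setminus\invbreve{S}^*_2(\ell,u)) = \frac{(u-\ell)^3}{36n^2}.
\]

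Finally, to guarantee this quantity is less than $\phi$, I would solve $\frac{(u-\ell)^3}{36n^2} < \phi$ for $n$, yielding $n > \tfrac{1}{6}\sqrt{(u-\ell)^3/\phi}$, which is exactly the hypothesis. There really is no hard step here: the work has already been done in Theorem \ref{thm:p2n} and Corollary \ref{cor:S2}, and the proposition is obtained by a one-line subtraction followed by algebra. The only point worth verifying carefully is the set-containment $\invbreve{S}^*_2(\ell,u) \subseteq \invbreve{U}^*_2(\bm\xi)$, which follows immediately from $g \le f$ on $[\ell,u]$ (monotonicity of the perspective under replacement of the underlying convex function by a pointwise lower bound).
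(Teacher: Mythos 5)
Your proposal is correct and follows essentially the same route as the paper: subtract $\vol(\invbreve{S}^*_2(\ell,u))=\frac{1}{18}(u-\ell)^3$ (Corollary \ref{cor:S2}) from the equally-spaced volume $\frac{1}{18}(u-\ell)^3+\frac{(u-\ell)^3}{36n^2}$ (Theorem \ref{thm:p2n}), using the containment $\invbreve{S}^*_2(\ell,u)\subset\invbreve{U}^*_2(\bm\xi)$, and solve $\frac{(u-\ell)^3}{36n^2}<\phi$ for $n$. Your added justification of the containment via $g\le f$ is a nice touch but otherwise the argument matches the paper's.
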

\begin{proof}
By Corollary \ref{cor:S2},
\[
\vol(\invbreve{S}_{2}^*(\ell,u))=\frac{(u-l)^3}{18},
\]
and by Theorem \ref{thm:p2n},
\[\vol(\invbreve{U}^*_{2}(\bm\xi)) = \frac{(u-l)^3}{18} +\frac{(u-l)^3}{36n^2}.\]

Clearly $\invbreve{S}_{2}^*(\ell,u) \subset \invbreve{U}^*_{2}(\bm\xi)$ and
\[
\vol(\invbreve{U}^*_{2}(\bm\xi)\backslash\invbreve{S}_{2}^*(\ell,u)) =\vol(\invbreve{U}^*_{2}(\bm\xi))-\vol(\invbreve{S}_{2}^*(\ell,u)) = \frac{(u-\ell)^3}{36n^2}.
\]
The lower bound on $n$ to obtain $\vol(\invbreve{U}^*_{2}(\bm\xi))\backslash\vol(\invbreve{S}_{2}^*(\ell,u)) < \phi$ follows easily.
\end{proof}

\begin{remark}
It is interesting to compare Propositions \ref{prop:volcomp1} and \ref{prop:volcomp2}.
Proposition \ref{prop:volcomp1} tells us that if we want to ``$\phi$-approximate''
 E+NR  with  PL+E+NR (i.e., using piecewise linearization), then we can do this using a certain number of equally-spaced
  linearization points, $n_1$.
 Similarly, if we want to $\phi$-approximate PR with PL+PR (i.e., using piecewise linearization), then we can do this using a certain number of
 equally-spaced linearization points, $n_2$. It is easy to check that, for \emph{all} $\phi$, we have that
 \[
 \frac{n_1}{n_2} = \sqrt{\frac{3}{2}\left(1-\frac{\ell}{u}\right)}.
 \]
 \end{remark}
 So the number of  equally-spaced  linearization points in the former case is more than in the latter case, if and only if
 $\frac{\ell}{u} < \frac{1}{3}$, and the factor $ \frac{n_1}{n_2}$ is never more than $\sqrt\frac{3}{2}\approx 1.225$.



\begin{acknowledgements}
   J. Lee was supported in part by ONR grant N00014-17-1-2296. D. Skipper was supported in part by ONR grant N00014-18-W-X00709.
   E. Speakman was supported by the Deutsche Forschungsgemeinschaft (DFG, German Research Foundation) - 314838170, GRK 2297 MathCoRe. Lee and Skipper gratefully acknowledge additional support from
   the Institute of Mathematical Optimization, Otto-von-Guericke-Universit\"{a}t, Magdeburg, Germany.
\end{acknowledgements}

\appendix  
\section*{Appendix}
\stepcounter{section}\label{sec:appendix}
\begin{lemma}\label{lem:positive}
For $x\in(0,1)\cup (1,\infty)$, $p>1$,
  $$
  x^p + (p-1) - px>0, \quad (p-1)x^p + 1 - px^{p-1}>0.
  $$
\end{lemma}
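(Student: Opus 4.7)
Both inequalities are of the Bernoulli/weighted-AM--GM flavor, and the cleanest route is to treat each side as a one-variable function of $x$ whose unique critical point is $x=1$, where the value is $0$.

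For the first inequality, set $\phi(x) := x^p + (p-1) - px$ on $(0,\infty)$. A direct evaluation gives $\phi(1) = 0$, and $\phi'(x) = p(x^{p-1} - 1)$. Since $p>1$, the factor $x^{p-1} - 1$ is negative on $(0,1)$ and positive on $(1,\infty)$, so $x = 1$ is the unique (strict) global minimizer. Hence $\phi(x) > 0$ for all $x \in (0,1) \cup (1,\infty)$.

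For the second inequality, the quickest path is to observe the reciprocal identity
\[
(p-1)x^p + 1 - p x^{p-1} \;=\; x^p \bigl( (1/x)^p + (p-1) - p(1/x) \bigr) \;=\; x^p \, \phi(1/x),
\]
so that positivity on $(0,1) \cup (1,\infty)$ follows immediately from the first part (since $1/x$ ranges over the same set). Alternatively, and equally routinely, one defines $\psi(x) := (p-1)x^p + 1 - p x^{p-1}$, checks $\psi(1) = 0$, and computes $\psi'(x) = p(p-1) x^{p-2}(x-1)$, which for $p>1$ has the same sign pattern as $\phi'$; this identifies $x=1$ as the unique strict minimum.

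There is essentially no obstacle here: both statements reduce to the fact that for $p>1$ the tangent to $x^p$ at $x=1$ lies strictly below the graph away from $x=1$ (equivalently, strict convexity of $x^p$ for $p>1$). I would present the argument in the order above, with the reciprocal substitution $x \mapsto 1/x$ used to avoid repeating the same single-variable calculus check.
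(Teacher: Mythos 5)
Your proposal is correct and rests on the same underlying fact as the paper's proof, namely strict convexity of $x^p$ for $p>1$ (the paper writes the first inequality as $x^p-1-p(x-1)>0$, a tangent-line bound at $x=1$, and the second as $1-x^p-px^{p-1}(1-x)>0$, a tangent-line bound at $x$ evaluated at $1$). Your explicit derivative sign analysis and the reciprocal substitution $x\mapsto 1/x$ are just routine repackagings of that same convexity argument, so there is nothing to add.
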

\begin{proof}
$x^p + (p-1) - px = x^p - 1 - p(x-1)>0$ because of the strict convexity of $x^p$ on $(0,\infty)$ for $p>1$. $(p-1)x^p + 1 - px^{p-1} = 1 - x^p - px^{p-1}(1-x)>0$ because of the strict convexity of $x^p$ on $(0,\infty)$ for $p>1$.
\end{proof}
\begin{lemma}\label{lem:hx}
  Letting $h(x):=(p-2)(x^p-1) - p(x^{p-1}-x)$, we have
  \begin{itemize}
    \item[(i)] if $1<p<2$, then $h(x)>0$ for $x\in(0,1)$;
    \item[(ii)] if $p>2$, then $h(x)<0$ for $x\in(0,1)$.
  \end{itemize}
\end{lemma}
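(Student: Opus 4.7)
The plan is to prove both parts by a single derivative calculation, exploiting that $h(1)=0$ and differentiating twice to reduce monotonicity to an explicit sign analysis.

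First I would note $h(1) = 0$, so it suffices to determine the sign of $h'$ on $(0,1)$ in each case and invoke the mean value theorem / fundamental theorem of calculus. A direct computation gives
\[
h'(x) = p\bigl[(p-2)x^{p-1} - (p-1)x^{p-2} + 1\bigr] =: p\,g(x),
\]
and one checks immediately that $g(1) = (p-2)-(p-1)+1 = 0$. So I am reduced to signing $g$ on $(0,1)$, which I do by one more differentiation:
\[
g'(x) = (p-1)(p-2)x^{p-3}(x-1).
\]

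On $(0,1)$ the factor $x^{p-3}(x-1)$ is strictly negative, so the sign of $g'$ is the sign of $-(p-1)(p-2)$. If $1<p<2$ then $(p-1)(p-2)<0$, so $g'>0$ on $(0,1)$; hence $g$ is strictly increasing with $g(1)=0$, giving $g(x)<0$ and therefore $h'(x)<0$ on $(0,1)$. Since $h(1)=0$, this yields $h(x)>0$ on $(0,1)$, which is (i). If instead $p>2$ then $(p-1)(p-2)>0$, so $g'<0$ on $(0,1)$; hence $g$ is strictly decreasing with $g(1)=0$, giving $g(x)>0$ and therefore $h'(x)>0$ on $(0,1)$, whence $h(x)<h(1)=0$, which is (ii).

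There is no real obstacle here: the only potentially awkward step is recognizing that after two differentiations one obtains a cleanly factored expression $(p-1)(p-2)x^{p-3}(x-1)$ whose sign depends only on whether $p<2$ or $p>2$. Everything else is bookkeeping at the endpoint $x=1$.
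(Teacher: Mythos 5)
Your proof is correct and follows essentially the same route as the paper: differentiate twice, observe that the second derivative factors as $(p-1)(p-2)\,x^{p-3}(x-1)$ (up to the factor $p$ you pulled out), and propagate the sign back through $h'(1)=0$ and $h(1)=0$. The only difference from the paper's argument is the cosmetic normalization $g = h'/p$.
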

\begin{proof}
  We have
  \begin{align*}
    h'(x) &= (p-2)px^{p-1} - p(p-1)x^{p-2} + p\\
    h''(x) &= (p-2)(p-1)px^{p-3}(x-1)
  \end{align*}
(i) If $1<p<2$, then $h''(x)>0$ on $(0,1)$, which implies that $h'(x)$ is increasing. Thus $h'(x)<h'(1)=0$, which implies that $h(x)$ is decreasing. Therefore, $h(x)>h(1)=0$. (ii) Similarly, we could prove that $h(x)$ is increasing and $h(x)<0$ on $(0,1)$.
\end{proof}
\begin{remark}
  Notice that $h(x) = -x^p h(1/x)$, we have $h(x)<0$ on $(1,\infty)$ when $1<p<2$, and $h(x)>0$ on $(1,\infty)$ when $p>2$.
\end{remark}
\begin{lemma}\label{lem:deltax}
  Letting $\delta(x) := (x^{p-1}-1)^2 - (p-1)^2 x^{p-2}(x-1)^2$, we have
  \begin{itemize}
    \item[(i)] if $1<p<2$, then $\delta(x)<0$ on $(0,1)\cup(1,\infty)$;
    \item[(ii)] if $p>2$, then $\delta(x)> 0$ on $(0,1)\cup(1,\infty)$.
  \end{itemize}
\end{lemma}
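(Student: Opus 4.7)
The plan is to reduce both parts to the standard monotonicity of $u \mapsto \sinh(u)/u$. For $x>0$ with $x\neq 1$, I would first factor $\delta$ as a difference of squares:
\[
\delta(x) = (x-1)^2\left[\left(\frac{x^{p-1}-1}{x-1}\right)^{2} - \bigl((p-1)\,x^{(p-2)/2}\bigr)^{2}\right] = (x-1)^2\bigl[A(x)-B(x)\bigr]\bigl[A(x)+B(x)\bigr],
\]
where $A(x) := (x^{p-1}-1)/(x-1)$ and $B(x) := (p-1)\,x^{(p-2)/2}$. Since $p>1$ forces $A(x)>0$ (numerator and denominator share sign) and $B(x)>0$ on $(0,\infty)\setminus\{1\}$, the factor $A(x)+B(x)$ is strictly positive, and hence $\delta(x)$ shares its sign with $A(x)-B(x)$.

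Next I would substitute $x=e^{2s}$ with $s\neq 0$. Using $e^{a}-1=2\,e^{a/2}\sinh(a/2)$ in both the numerator and denominator of $A(x)$,
\[
A(x) = e^{(p-2)s}\,\frac{\sinh\bigl((p-1)s\bigr)}{\sinh(s)},\qquad B(x) = (p-1)\,e^{(p-2)s},
\]
so $\delta(x)$ has the sign of $\sinh\bigl((p-1)s\bigr)/\sinh(s)-(p-1)$. Because $\sinh$ is odd, this ratio equals $\sinh\bigl((p-1)|s|\bigr)/\sinh(|s|)$, so it suffices to compare $\sinh(a\sigma)/\sinh(\sigma)$ with $a$ for $\sigma:=|s|>0$ and $a:=p-1>0$.

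The final step invokes the classical fact that $u\mapsto\sinh(u)/u$ is strictly increasing on $(0,\infty)$ (immediate from $\frac{d}{du}(u\cosh u-\sinh u)=u\sinh u>0$ together with $(u\cosh u-\sinh u)\big|_{u=0}=0$). For $\sigma>0$: when $0<a<1$ we have $a\sigma<\sigma$, hence $\sinh(a\sigma)/(a\sigma)<\sinh(\sigma)/\sigma$, giving $\sinh(a\sigma)<a\sinh(\sigma)$ and therefore $\delta(x)<0$, which establishes (i); when $a>1$ the inequality reverses, yielding $\delta(x)>0$ and (ii). The only real work is the bookkeeping in the factorization and the hyperbolic substitution; the underlying comparison between $\sinh(a\sigma)$ and $a\sinh(\sigma)$ according to whether $a<1$ or $a>1$ is classical, so I do not anticipate a genuine obstacle.
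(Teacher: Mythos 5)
Your proof is correct, but it takes a genuinely different route from the paper's. Both arguments begin (explicitly in your case, implicitly in the paper's) by factoring $\delta$ as a difference of squares so that its sign is governed by $\tfrac{x^{p-1}-1}{x-1}-(p-1)x^{(p-2)/2}$; note that on $(0,1)$ this quantity is, up to the positive factor $1-x$, exactly the paper's function $\varphi(x)=1-x^{p-1}-(p-1)x^{\frac{p-2}{2}}(1-x)$. From there the paths diverge. The paper first reduces to $(0,1)$ via the reflection identity $\delta(x)=x^{2p-2}\delta(1/x)$ and then signs $\varphi$ by a derivative computation, using the tangent-line inequality for the strictly concave (resp.\ convex) function $x^{p/2}$ together with $\varphi(1)=0$ --- i.e.\ it stays entirely within the power-function convexity toolkit already used in Lemma \ref{lem:positive} and Lemma \ref{lem:hx}. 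You instead substitute $x=e^{2s}$, rewrite the key ratio as $\sinh\bigl((p-1)s\bigr)/\sinh(s)$, and invoke the classical strict monotonicity of $u\mapsto\sinh(u)/u$ on $(0,\infty)$; the oddness of $\sinh$ then plays the role of the paper's $x\mapsto 1/x$ symmetry and lets you treat $(0,1)$ and $(1,\infty)$ in one stroke. Your version is slightly slicker and makes the underlying mean-type inequality transparent, at the cost of importing a hyperbolic lemma external to the paper's machinery; the paper's version is marginally longer but self-contained within the convexity arguments it reuses throughout the appendix. All steps in your argument check out: the factor $A(x)+B(x)$ is indeed positive for $p>1$, the identity $e^{a}-1=2e^{a/2}\sinh(a/2)$ is applied correctly in numerator and denominator, and the monotonicity proof via $\tfrac{d}{du}(u\cosh u-\sinh u)=u\sinh u>0$ is sound.
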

\begin{proof}
  Notice that $\delta(x) = x^{2p-2}\delta(1/x)$, we only need to show the results on $(0,1)$. Letting
  $$
  \varphi(x):= 1-x^{p-1} - (p-1)x^{\frac{p-2}{2}}(1-x),
  $$
  we have
  $$
  \varphi'(x) = -(p-1)x^{\frac{p-4}{2}}\left(x^{\frac{p}{2}}-1 -\frac{p}{2}(x-1)\right).
  $$
  (i) $\varphi'(x)> 0$ because of the strict concavity of $x^{p/2}$ when $1<p<2$. Along with $\varphi(1)=0$, we obtain that $\varphi(x)<0$ on $(0,1)$.
  (ii) Similarly, because of the strict convexity of $x^{p/2}$ when $p>2$, we obtain that $\varphi(x)>0$ on $(0,1)$.
\end{proof}
\begin{lemma}\label{lem:partialp}
For $x\in(0,1)\cup(1,\infty)$,
$$
\phi(x):= p(p-1)(1-x)x^{p-1}\log x + (x^{p-1}-1)(x^p-1)>0.
$$
\end{lemma}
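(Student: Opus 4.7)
My plan is to prove $\phi(x) > 0$ for $x > 0$, $x \neq 1$, in three stages: (i) reduce to the case $x > 1$ via a symmetry; (ii) substitute $t := \log x$ so that everything becomes exponentials; (iii) expose positivity through the Taylor coefficients of $\phi(e^t)$.

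For (i), a direct algebraic simplification shows the functional equation $\phi(1/x) = \phi(x)/x^{2p-1}$. Since $x^{2p-1} > 0$, $\phi(x)$ and $\phi(1/x)$ share the same sign, so it suffices to prove positivity on $(1,\infty)$. For (ii), I set $t := \log x > 0$ and use the identity $(1-e^t)e^{(p-1)t} = e^{(p-1)t} - e^{pt}$ to rewrite
$$\phi(e^t) = p(p-1)\, t\bigl(e^{(p-1)t} - e^{pt}\bigr) + e^{(2p-1)t} - e^{(p-1)t} - e^{pt} + 1,$$
then Taylor-expand each exponential termwise. The constant and linear parts vanish identically, and for each $m \geq 2$ the coefficient of $t^m$ is
$$m!\, c_m = (2p-1)^m - p^m - (p-1)^m - m p(p-1)\bigl[p^{m-1} - (p-1)^{m-1}\bigr].$$

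For (iii), I will combine the binomial expansion $(2p-1)^m = \sum_{k=0}^{m}\binom{m}{k} p^k (p-1)^{m-k}$ with the telescoping identity $p^{m-1} - (p-1)^{m-1} = \sum_{k=0}^{m-2} p^k (p-1)^{m-2-k}$, so that after reindexing $m p(p-1)\bigl[p^{m-1} - (p-1)^{m-1}\bigr] = m \sum_{k=1}^{m-1} p^k (p-1)^{m-k}$. Both expressions then collapse into a single manifestly-signed sum,
$$m!\, c_m = \sum_{k=1}^{m-1} \Bigl[\binom{m}{k} - m\Bigr] p^k (p-1)^{m-k}.$$
The bracketed factor vanishes at the endpoints $k \in \{1, m-1\}$ and, for $2 \leq k \leq m-2$, is non-negative and strictly positive as soon as $m \geq 4$ (since $\binom{m}{2} = m(m-1)/2 > m$ for $m \geq 4$). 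Because $p > 1$ makes every $p^k(p-1)^{m-k} > 0$, we obtain $c_m \geq 0$ for all $m \geq 2$ with $c_m > 0$ for $m \geq 4$; hence $\phi(e^t) \geq c_4\, t^4 > 0$ for $t > 0$.

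The main obstacle I anticipate lies in stage (iii): the three geometric-type quantities $(2p-1)^m$, $p^{m-1}$, $(p-1)^{m-1}$ must be folded into a single non-negative binomial sum, and a sign error or reindexing slip is easy to make. Two useful sanity checks en route are $c_2 = c_3 = 0$ (reflecting the fact that $\phi$ has a zero of order at least $4$ at $x=1$) and $c_4 = p^2(p-1)^2/12$, matching the expected leading behaviour $\phi(x) = \tfrac{p^2(p-1)^2}{12}(x-1)^4 + O((x-1)^5)$.
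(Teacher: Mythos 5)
Your proof is correct, and it takes a genuinely different route from the paper. The paper argues by monotonicity: it computes $\phi'(x)$, shows that $\frac{d}{dx}\bigl(\phi'(x)/x^{p-2}\bigr)>0$ using Lemma \ref{lem:positive} and the inequality $t-1\ge\log t$, and concludes from $\phi'(1)=0$ that $\phi$ decreases on $(0,1)$ and increases on $(1,\infty)$, hence $\phi>\phi(1)=0$. You instead eliminate the logarithm entirely: the symmetry $\phi(1/x)=\phi(x)/x^{2p-1}$ (which checks out) reduces to $x>1$, the substitution $x=e^{t}$ turns $\phi$ into an entire function of $t$, and the coefficient identity
\[
m!\,c_m=\sum_{k=1}^{m-1}\Bigl[\tbinom{m}{k}-m\Bigr]p^{k}(p-1)^{m-k}
\]
is verified by the binomial and telescoping expansions you cite; with $p>1$ each summand is nonnegative (zero at $k\in\{1,m-1\}$, strictly positive for $2\le k\le m-2$ by unimodality of binomial coefficients and $\binom{m}{2}>m$ for $m\ge4$), so $c_2=c_3=0$, $c_4=p^2(p-1)^2/12>0$, and $\phi(e^t)\ge c_4t^4>0$. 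What your approach buys: it is self-contained (no appeal to Lemma \ref{lem:positive} or to $t-1\ge\log t$), it explains the fourth-order vanishing of $\phi$ at $x=1$, and it yields the quantitative lower bound $\phi(x)\ge\frac{p^2(p-1)^2}{12}(\log x)^4$ for $x>1$ (with the symmetric analogue on $(0,1)$). What the paper's approach buys is brevity and uniformity of style with the neighboring appendix lemmas, which are all proved by the same differentiate-and-compare-at-$1$ pattern. One small remark: like the paper, you implicitly use $p>1$ (so that $p-1>0$ makes every term $p^k(p-1)^{m-k}$ positive); the lemma statement omits this hypothesis but it is assumed throughout the paper, so this is not a gap.
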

\begin{proof}
  We have
  \begin{align*}
    \phi'(x) &= p(p-1)((p-1)x^{p-2}-px^{p-1})\log x +p(p-1)(1-x)x^{p-2} \\
    &~~~~+ (p-1)x^{p-2}(x^p-1)+px^{p-1}(x^{p-1}-1).\\
    \frac{\phi'(x)}{x^{p-2}}&=((p-1)-px)p(p-1)\log x +p(p-1)(1-x) \\
    &~~~~+ (p-1)(x^p-1) +p(x^p-x).\\
    \frac{d}{dx}\left(\frac{\phi'(x)}{x^{p-2}}\right) 
    &=-p^2(p-1)\log x +\frac{p(p-1)^2}{x} - p^3 +p(2p-1)x^{p-1}\\
    &=p^2(x^{p-1}-1-\log x^{p-1}) +p(p-1)\left(\frac{p-1+x^p-px}{x}\right).
  \end{align*}
  By Lemma \ref{lem:positive} and the inequality $t-1\ge \log t$, we have $\frac{d}{dx}\left(\frac{\phi'(x)}{x^{p-2}}\right)>0$.
  Because $\phi'(1)=0$, we have $\phi'(x)<0$ for $x\in(0,1)$ and $\phi'(x)>0$ for $x\in(1,\infty)$. Combined with $\phi(1)=0$, we obtain $\phi(x)>0$ for $x\in(0,1)\cup(1,\infty)$, which proves the lemma.
\end{proof}
\begin{proof}[Proof of Theorem \ref{thm:Newtonn}.]
  For $p>2$, we know that for $k\ge 0$,
  $$
  F(x^{k+1}) \le F(x^k) + F'(x^k) (x^{k+1} -x^k) =0,
  $$
  because of the concavity of $F_i(x)$ from Lemma \ref{lem:convexity} (ii). Along with $F(x^0)\le0$ (Proposition \ref{prop:initial_F}) and $[F'(x^k)]^{-1}\ge 0$ from Lemma \ref{lem:Jacobian} (ii), we know that $x^{k+1}\ge x^k$ for $k\ge 0$. Also by concavity, we have
  $$
  0\le F(u\mathbf{1}) - F(x^k) \le F'(x^k)(u\mathbf{1}-x^k),
  $$
  which implies $x^k\le u\mathbf{1}$ because $[F'(x^k)]^{-1}$ is nonnegative. Therefore the increasing bounded sequence $\{x^k\}$ has a limit $x^*=\lim_{k\rightarrow \infty} x^k$ and $F(x^*) = 0$.

  For $1<p<2$, similarly, we know that for $k\ge 0$,
  $$
  F(x^{k+1}) \ge F(x^k) + F'(x^k) (x^{k+1} -x^k) =0,
  $$
  because of the convexity of $F_i(x)$ from Lemma \ref{lem:convexity} (i). Along with $F(x^0)\ge0$ (Proposition \ref{prop:initial_F}), we know that $[F'(x^k)]^{-1}\ge 0$ from Lemma \ref{lem:Jacobian} (i). we know that $x^{k+1}\le x^k$ for $k\ge 0$. Also by convexity, we have
  $$
  0\ge F(\ell\mathbf{1}) - F(x^k) \ge F'(x^k)(\ell\mathbf{1}-x^k),
  $$
  which implies $x^k\ge \ell\mathbf{1}$ because $[F'(x^k)]^{-1}$ is nonnegative. Therefore the decreasing bounded sequence $\{x^k\}$ has a limit $x^*=\lim_{k\rightarrow \infty} x^k$ and $F(x^*) = 0$.
\end{proof}


\bibliographystyle{spmpsci}
\bibliography{perspecbib}

\end{document}